\documentclass{amsart}
\usepackage{mathtools}
\usepackage{amsmath,amssymb,amsthm,color,mathrsfs,array}
\usepackage{multirow}
\usepackage{verbatim}
\usepackage{url}

\newcommand{\re}{\mathbb{R}}
\newcommand{\mR}{\mathbb{R}}

\newcommand{\N}{\mathbb{N}}

\newcommand{\half}{\frac{1}{2}}
\newcommand{\lmd}{\lambda}

\newcommand{\nn}{\nonumber}
\newcommand{\eps}{\epsilon}

\def\af{\alpha}

\newcommand{\Sig}{\Sigma}

\newcommand{\reff}[1]{(\ref{#1})}

\newcommand{\mc}[1]{\mathcal{#1}}

\newcommand{\qmod}[1]{\mbox{QM}[#1]}
\newcommand{\ideal}[1]{\mbox{Ideal}[#1]}
\newcommand{\st}{\mathit{s.t.}}
\newcommand{\hm}{{(1)}}
\newcommand{\se}{ {(2)}}
\newcommand{\thi}{ {(3)} }
\newcommand{\den}{\mathit{den}}

\newcommand{\cl}[1]{\mathit{cl}(#1)}

\newcommand{\bdes}{\begin{description}}
	\newcommand{\edes}{\end{description}}
\newcommand{\bal}{\begin{align}}
\newcommand{\eal}{\end{align}}
\newcommand{\bnum}{\begin{enumerate}}
	\newcommand{\enum}{\end{enumerate}}
\newcommand{\bit}{\begin{itemize}}
	\newcommand{\eit}{\end{itemize}}
\newcommand{\bea}{\begin{eqnarray}}
\newcommand{\eea}{\end{eqnarray}}
\newcommand{\be}{\begin{equation}}
\newcommand{\ee}{\end{equation}}
\newcommand{\baray}{\begin{array}}
	\newcommand{\earay}{\end{array}}
\newcommand{\bsry}{\begin{subarray}}
	\newcommand{\esry}{\end{subarray}}
\newcommand{\bca}{\begin{cases}}
	\newcommand{\eca}{\end{cases}}
\newcommand{\bcen}{\begin{center}}
	\newcommand{\ecen}{\end{center}}
\newcommand{\bbm}{\begin{bmatrix}}
	\newcommand{\ebm}{\end{bmatrix}}
\newcommand{\bmx}{\begin{matrix}}
	\newcommand{\emx}{\end{matrix}}
\newcommand{\bpm}{\begin{pmatrix}}
	\newcommand{\epm}{\end{pmatrix}}
\newcommand{\btab}{\begin{tabular}}
	\newcommand{\etab}{\end{tabular}}

\theoremstyle{plain}
\newtheorem{theorem}{Theorem}[section]

\newtheorem{prop}[theorem]{Proposition}
\newtheorem{lem}[theorem]{Lemma}

\newtheorem{defi}[theorem]{Definition}
\newtheorem*{claim*}{Claim}
\newtheorem{thm}[theorem]{Theorem}
\theoremstyle{definition}

\newtheorem{exm}[theorem]{Example}

\newtheorem{conj}[theorem]{Conjecture}

\setcounter{equation}{0}
\setcounter{subsection}{0}
\numberwithin{equation}{section}

\begin{document}
\title[Homogenization for polynomial optimization with unbounded sets]
{Homogenization for polynomial optimization with unbounded sets}

\author[Lei Huang]{Lei Huang}
\address{Lei Huang,
 Institute of Computational Mathematics and Scientific/Engineering Computing, Academy of Mathematics and Systems Science, Chinese Academy of Sciences,  and School of Mathematical Sciences, University of Chinese Academy of Sciences, Beijing, China, 100190.}
\email{huanglei@lsec.cc.ac.cn}

\author[Jiawang Nie]{Jiawang~Nie}
\address{Jiawang Nie,  Department of Mathematics,
University of California San Diego,
9500 Gilman Drive, La Jolla, CA, USA, 92093.}
\email{njw@math.ucsd.edu}

\author[Ya-xiang Yuan]{Ya-xiang Yuan}
\address{Ya-xiang Yuan,
 Institute of Computational Mathematics and Scientific/Engineering Computing, Academy of Mathematics and Systems Science, Chinese Academy of Sciences, Beijing, China, 100190.}
\email{yyx@lsec.cc.ac.cn}

\subjclass[2010]{65K05, 90C22, 90C26}

\keywords{polynomial optimization, homogenization,
Moment-SOS relaxations, optimality conditions}

\maketitle
\begin{abstract}
	This paper considers polynomial optimization with unbounded sets.
	We give a homogenization formulation and propose a hierarchy of
	Moment-SOS relaxations to solve it.
	Under the assumptions that the feasible set is closed at infinity and
	the ideal of homogenized  equality constraining polynomials is real radical,
	we show that  this hierarchy of Moment-SOS relaxations has finite convergence,
	if some optimality conditions
	(i.e., the linear independence constraint qualification,
	strict complementarity and second order sufficient conditions) hold
	at every minimizer, including the one at infinity.
	Moreover, we prove extended versions of
	Putinar-Vasilescu type Positivstellensatz
	for polynomials that are nonnegative on unbounded sets.
	The classical Moment-SOS hierarchy with denominators is also studied.
	In particular, we give a positive answer to a conjecture of
	Mai, Lasserre and Magron in their recent work.
	\keywords{Polynomial optimization \and Homogenization \and Moment-SOS relaxations \and Optimality conditions}
\end{abstract}

\section{Introduction}
Consider the optimization problem
\be   \label{1.1}
\left\{\baray{rl}
\min & f(x) \\
\st  & c_{i}(x)=0~(i \in \mathcal{E}), \\
& c_{j}(x) \geq 0~(j \in \mathcal{I}),
\earay \right.
\ee
where $f(x),  c_i(x), c_j(x)$ are polynomials in
$x \coloneqq (x_1,\dots,x_n) \in \re^n$.
The $\mc{E}$ and $\mc{I}$ are disjoint labeling sets
for equality and inequality constraining polynomials.
Let $K$ denote the feasible set of $(\ref{1.1})$ and
let $f_{\min}$ denote the optimal value of \reff{1.1}.
This contains a broad class of important optimization problems,
such as stability numbers of graphs \cite{dKp02,mte65}
and optimization in quantum information theory \cite{FF20}.
We refer to \cite{LasBk15,LasICM,Lau09}
for related work about polynomial optimization.

The Moment-SOS hierarchy  
proposed by Lasserre~\cite{Las01} is efficient for solving
the polynomial optimization \reff{1.1}.
Under the archimedeanness for constraining polynomials
(the set $K$ must be compact for this case;
see \cite{Las01,Lau07,putinar1993positive}),
it yields a sequence of convergent lower bounds
for the minimum $f_{\min}$. Later, it was shown in \cite{nieopcd}
that the Moment-SOS hierarchy has finite convergence
if in addition the linear independence constraint qualification,
the strict complementarity and second order sufficient conditions
hold at every minimizer.\footnote{
	Throughout the paper, for convenience,
	a minimizer means a global minimizer,
	unless it is otherwise specified for the meaning.}
When the set $K$ is compact,
we refer to the work \cite{dKLLS17,dKleLau20,Las11,Schw05,SlotLau20}.
For convex polynomial optimization, the  Moment-SOS hierarchy
has finite convergence under the strict convexity
or sos-convexity conditions \cite{dKlLau11,Las09}.
When the equality constraints give a finite set,
the Moment-SOS hierarchy has finite convergence,
as shown in \cite{LLR08,Lau07,Nie13}.
More general introductions to polynomial optimization
can be found in the books and surveys
\cite{LasBk15,LasICM,Lau09,LauICM,marshall2008positive,Sch09}.

When the feasible set $K$ is unbounded (the archimedeanness fails for this case),
the classical Moment-SOS hierarchy typically does not converge.
For instance, this is the case if the objective $f$
is a nonnegative polynomial that is not a sum of squares (SOS)
and there are no constraints.
There exists work on solving polynomial optimization with unbounded sets.
Based on Karush-Kuhn-Tucker (KKT) conditions and Lagrange multipliers,
there exist tight Moment-SOS relaxations for solving \reff{1.1},
such as the work in \cite{DNP07,NDS06,nie2013exact,nie2019tight}.
In \cite{JLL14}, the authors proposed Moment-SOS relaxations
based on adding sublevel set constraints.
The resulting hierarchy of this type of relaxations
is also convergent under the archimedeanness for the new constraints.
In the above mentioned work, it is assumed that the optimal value of \reff{1.1}
is achieved. However, this is not always the case.
For instance, for $f= x_1^2 + (1-x_1x_2)^2$ and $K = \re^2$,
the optimal value $f_{\min} = 0$ is not achievable at any feasible point.
Interestingly, it was shown in \cite{ahmadi2019complexity} that
checking whether or not the optimal value is achievable is NP-hard.
When the minimum $f_{\min}$ is not achievable, we refer to the work
\cite{ha2009solving,vui2008global,schweighofer2006global}
for such polynomial optimization problems.
%
%

When the feasible set $K$ is unbounded,
Mai, Lasserre and Magron \cite{MLM21} recently  proposed
a new hierarchy of Moment-SOS relaxations. Instead of solving \reff{1.1} directly,
they considered the perturbation $f+\epsilon(1+\|x\|^2)^{d_0}$
for the objective, where $\epsilon>0$, and $d_0$ is the smallest integer such that
$2d_0 \geq \deg(f)+1$. They proved the convergence to a neighborhood of $f_{\min}$
when it is achievable.
The convergence is related to Putinar-Vasilescu's Positivstellensatz
\cite{putinar1999positive,putinar1999solving}.
The complexity for this new hierarchy
is studied in the recent work \cite{Mai2021OnTC}.
For the ideal case $\epsilon=0$, they conjectured that
their hierarchy has finite convergence
under some standard optimality conditions at each minimizer.
%

The polynomial optimization with unbounded sets
is typically hard to solve.
There are some questions of high interests.
For instance, can we get a hierarchy of Moment-SOS relaxations
that has finite convergence for almost all cases
(i.e., for generic cases)?       	
When the optimal value $f_{\min}$ is not achievable,
can we get a hierarchy of Moment-SOS relaxations
that has finite convergence?

\subsection*{Contributions}

This paper studies how to solve polynomial optimization with unbounded sets.
For a polynomial $p(x)$ of degree $\ell$, let $\tilde{p}$
denote its homogenization in  $\tilde{x} \coloneqq \left(x_0,x\right)$,
i.e., $\tilde{p}(\tilde{x}) = x_0^\ell p(x/x_0)$.
We consider the following homogenization
$\widetilde{K} \subseteq \mathbb{R}^{n+1}$ of $K$
\be \label{set:wtld:K}
\widetilde{K} \coloneqq \left\{\tilde{x} \in \mathbb{R}^{n+1}
\left| \baray{l}
\tilde{c}_{i}(\tilde{x})=0~(i \in \mathcal{E}), \\
\tilde{c}_{j}(\tilde{x}) \geq 0~(j \in \mathcal{I}), \\
\|\tilde{x}\|^2-1=0, ~ x_{0} \geq 0
\earay  \right.
\right\} .
\ee
Denote by $\tilde{c}_{\mathcal{E}}$ (resp., $\tilde{c}_{\mathcal{I}}$)
the equality (resp., inequality) constraining polynomial tuple for $\widetilde{K}$.
Let $\ideal{\tilde{c}_{\mathcal{E}}}_{2k}$ denote the $2k$th degree truncation
of the ideal of $\tilde{c}_{\mathcal{E}}$ and let $\qmod{\tilde{c}_{\mathcal{I}}}_{2k}$
denote the $2k$th degree truncation of the quadratic module of $\tilde{c}_{\mathcal{I}}$
(see Section~\ref{ssc:pre:pop} for the definition).
Suppose the degree of $f$ is $d$. For an order $k \ge \lceil \frac{d}{2}\rceil$,
we consider the $k$th order relaxation for \reff{1.1}:
\be \label{kth:SOS:hmg}
\left\{ \baray{rl}
f_k \coloneqq \max   &  \gamma \\
\st &  \tilde{f}(\tilde{x})- \gamma x_0^d \in
\ideal{\tilde{c}_{\mathcal{E}}}_{2k} + \qmod{\tilde{c}_{\mathcal{I}}}_{2k} .
\earay \right.
\ee
When the set $K$ is closed at infinity (see Definition~\ref{def:closed:inf}),
a scalar $\gamma$ is a lower bound of $f$ on $K$
if and  only if $\tilde{f}(\tilde{x})- \gamma x_0^d \geq 0 $
on $\widetilde{K}$. This is the motivation for considering
the relaxation \reff{kth:SOS:hmg}.

In this paper, we consider the case that the feasible set $K$
of \reff{1.1} is unbounded.
Our new contributions are:

\bit

\item [I.]
Our major results are to study conditions for
the hierarchy of relaxations \reff{kth:SOS:hmg}
to have finite convergence.
We prove that the finite convergence
can happen under some general assumptions,
without assuming that $f$ is positive at infinity on $K$
(for such a case, the asymptotic convergence is then guaranteed).
%
%

\item [II.]
Assume that $K$ is closed at infinity,
the ideal $\ideal{\tilde{c}_{\mathcal{E}}}$ is real radical
(see Section~\ref{ssc:pre:pop} for the definition).
When some optimality conditions (i.e.,
the linear independence constraint qualification,
strict complementarity and second order sufficient conditions)
hold at every minimizer of (\ref{1.1}),
including the one at infinity,
we show that the hierarchy of \reff{kth:SOS:hmg}
has finite convergence.
In particular, the finite convergence neither assume that
the optimal value $f_{\min}$ is achievable
nor assume that $f$ is positive at infinity on $K$.
%
%
The proof uses some classical results in \cite{nieopcd}.
To the best of the authors' knowledge,
this is the first work that proves the finite convergence for
polynomial optimization with unbounded sets.

\item [III.]
We prove that classical optimality conditions for minimizers of \reff{1.1}
are equivalent to those  for   the
homogenized optimization problem (\ref{3.5}).
This shows that the finite convergence of the hierarchy of \reff{kth:SOS:hmg}
is actually determined by optimality conditions for minimizers of \reff{1.1}.
When the ideal $\ideal{\tilde{c}_{\mathcal{E}}}$ is not real radical,
we give a new hierarchy of Moment-SOS relaxations
that also has finite convergence,
under the same assumptions on optimality conditions.
We also study genericity properties of optimality conditions.
When the polynomials are generic, we show that optimality conditions
hold at every  minimizer
and there are no minimizers at infinity.

\item [IV.] 
We give  extended versions of the
Putinar-Vasilescu's Positivstellensatz
for polynomials that are nonnegative on unbounded semialgebraic sets.
When the linear independence constraint qualification,
strict complementarity and second order sufficient conditions
hold at every minimizer of the corresponding optimization problem,
we prove that  a desired SOS type representation
with the denominator $\|x\|^{2k}$ or $(1+\|x\|^2)^k$ exists.
The Putinar-Vasilescu's Positivstellensatz gives rise to
another hierarchy of Moment-SOS relaxations for solving \reff{1.1}.
This hierarchy is the same as the one given in \cite{MLM21}
for the case $\epsilon=0$. It was conjectured in \cite{MLM21}
that this hierarchy has finite convergence
when some standard optimality conditions hold at every minimizer.
We prove that this conjecture is true,
by applying our finite convergence theory
for the hierarchy of \reff{kth:SOS:hmg}.

\eit

We would like to make the following comparisons with prior existing work
for solving polynomial optimization with unbounded sets.
\bit
\item By using KKT conditions and Jacobian representations,
a tight hierarchy of Moment-SOS relaxations is given in \cite{nie2013exact}.
By using Lagrange multiplier expressions,
an other tight hierarchy of Moment-SOS relaxations is given in \cite{nie2019tight}.
Both methods as in \cite{nie2013exact,nie2019tight}
assume that the tuple of constraining polynomials
is nonsingular and the optimal value is achievable.
In particular, the method as in \cite{nie2013exact} requires to use
defining equations for determinantal varieties. In \cite{nie2019tight},
Lagrange multiplier expressions are required.
For general polynomial constraints, it may not be convenient
to formulate the Moment-SOS relaxations as in \cite{nie2013exact,nie2019tight}.
Moreover, when the optimal value $f_{\min}$ is not achievable,
the methods as in
\cite{nie2013exact,nie2019tight} are not applicable.

\item  In the recent work by Jeyakumar et al. \cite{JLL14},
a sublevel set constraint $c- f(x) \geq 0$ is posed
as a new constraint.
When the quadratic module generated for new constraints is archimedean,
it produces an asymptotically convergent hierarchy of Moment-SOS relaxations.
When the archimedeanness fails, the asymptotic convergence is not guaranteed.
Moreover, finding such a $c$ is typically difficult for  constraint cases even if it exists and  no finite convergence results are known
for adding such new constraints.

\item When the optimal value $f_{\min}$ is not achievable,
there exist convergent hierarchies of SOS relaxations
in the work \cite{ha2009solving,vui2008global,schweighofer2006global},
under certain technical assumptions.
They are based on gradient tentacles \cite{schweighofer2006global}
or truncated tangent varieties \cite{ha2009solving,vui2008global}.
Only the asymptotic convergence is shown for
these hierarchies, under certain assumptions.
There are no finite convergence results when the
optimal value $f_{\min}$ is not achievable.

\item The recent work \cite{MLM21} of Mai, Lasserre and Magron considers
the perturbation $f+\epsilon(1+\|x\|^2)^{d_0}$,
with $2d_0 \geq \deg(f)+1$, for the objective.
When $\eps > 0$ is small and $f_{\min}$ is achievable,
they give a hierarchy of Moment-SOS relaxations
that converges to a neighborhood of the optimal value $f_{\min}$.
Finite convergence is not known for this hierarchy.
For the case $\eps = 0$, the authors conjectured that
this hierarchy has finite convergence,
under some standard optimality conditions.
We give a positive answer to this conjecture in Section~\ref{sc:denom}.

\eit

This paper is organized as follows.
Section~\ref{sc:pre} reviews some basics
about optimality conditions and polynomial optimization.
Section~\ref{sc:hmg:momsos} gives the new hierarchy
of relaxations and presents some asymptotic convergence results.
In Section~\ref{sc:ficvg}, we prove that the proposed hierarchy of relaxations
has finite convergence when some optimality conditions
hold at every minimizer, including the one at infinity.
In Section~\ref{sc:PV}, we prove extended versions of
the Putinar-Vasilescu's Positivstellensatz for polynomials
that are nonnegative on unbounded semialgebraic sets.
Section~\ref{sc:denom} studies
the Moment-SOS hierarchy with denominators.
Section~\ref{num:ex} presents some numerical experiments.
Section~\ref{sc:dis} draws  conclusions and make some discussions.

\section{Preliminaries}
\label{sc:pre}

\subsection*{Notation}

The symbol $\mathbb{N}$ (resp., $\mathbb{R}$, $\mathbb{C}$)
denotes the set of nonnegative integers (resp., real numbers, complex numbers).
For $x \coloneqq (x_1,\dots,x_n)$ and
$\alpha=(\alpha_1,\dots,\alpha_n)$, denote $x^{\alpha}  \coloneqq x_1^{\alpha_1}\cdots x_n^{\alpha_n}$ and $\lvert\alpha\rvert  \coloneqq \alpha_{1}+\cdots+\alpha_{n}$.
For a degree $d$, let $[x]_d$ denote the vector of all monomials in $x$
and whose degrees are at most $d$, ordered in the graded alphabetical ordering, i.e.,
\[
[x]_d^T \,=\, [1,  x_1, x_2, \ldots, x_1^2, x_1x_2, \ldots,
x_1^d, x_1^{d-1}x_2, \ldots, x_n^d ].
\]
Denote the power set
\[
\mathbb{N}_{d}^{n}  \coloneqq  \left\{\alpha \in \mathbb{N}^{n}: |\alpha| \le  d \right\}.
\]
Let $\mathbb{R}[x] \coloneqq \mathbb{R}[x_1,\dots,x_n]$ denote the ring of polynomials in
$x$ with real coefficients, and $\mathbb{R}[x]_d$ is the subset of polynomials in
$\re[x]$ with degrees at most $d$.  For a polynomial $p$, $\deg(p)$ denotes its total degree,
and $\tilde{p}$ denotes its homogenization, i.e.,
$\tilde{p}(\tilde{x})=x_0^{\deg(p)} p(x/x_0)$
for $\tilde{x} \coloneqq (x_0,x_1,\dots,x_n)$.
A homogeneous polynomial is said to be a form.
A form $p$ is positive definite if $p(x)>0$ for all nonzero $x \in \re^n$.
For a general polynomial $p \in \re[x]$,
$p^{(i)}$ denotes the homogeneous part of
the $i$th highest degree for $p$.
For $t \in \mathbb{R}$, $\lceil t \rceil$
denotes the smallest integer greater than or equal to $t$.
For a function $p$ in $x$,   $\nabla p$ (resp., $\nabla^2 p$) denotes its gradient (resp., Hessian)
with respect to $x$.  If $x$ is a subvector of its variables,
then $\nabla_{x}p$  denotes its gradient
with respect to $x$. If $x,y$ are two subvectors of its variables,
then  denote the  Hessian
with respect to $x=(x_{i_1},\dots,x_{i_{\ell_1}}),y=(x_{j_1},\dots,x_{\ell_2})$  by $\nabla_{x,y}^2 p$, i.e.,
\[
\nabla_{x,y}^2 p=\left(\nabla_{x_{i_t}y_{i_k}}^2p\right)_{t=1,\dots,\ell_1,k=1,\dots,\ell_2}.
\] For a matrix $A$, $A^{\mathrm{T}}$ denotes its transpose. A symmetric matrix $X \succeq 0$  if $X$ is positive semidefinite. For a vector $v$, $\|v\|$ denotes the standard Euclidean norm.
For a set $T \subseteq \mathbb{R}^n$,  let $\cl{T}$ be the closure of $T$.
A property is said to be {\it generic} for a vector space $V$
if it holds in an open dense set of $V$.

\subsection{Optimality conditions}
\label{ssc:opcd}

We review some basic theory for nonlinear programming.
Let $x^*$ be a local minimizer of $(\ref{1.1})$.
Denote the label set of active constraints at $x^*$
\be \label{nota:J(x*)}
J(x^*) \coloneqq \left\{i \in \mathcal{E} \cup \mathcal{I} :  c_{i}(x^*)=0\right\}.
\ee
The linear independence constraint qualification condition (LICQC)
is said to hold at $x^*$ if the gradient set $\{ \nabla c_{i}(x^*)\}_{i \in J(x^*)}$
is linearly independent. When the LICQC holds at $x^*$,
there exist Lagrange multipliers $\lambda_i$  $(i \in \mathcal{E} \cup \mathcal{I})$ such that
\be  \label{1.1:KKT}
\boxed{
	\begin{array}{c}
		\nabla f(x^*) = \sum_{i \in \mathcal{E} \cup \mathcal{I}} \lambda_{i} \nabla c_{i}(x^*), \\
		\lambda_{j} \geq 0~(j \in \mathcal{I}), \, \lambda_{j} c_{j}(x^*)=0~(j \in \mathcal{I}).
	\end{array}
}
\ee
The above is called the first order optimality condition (FOOC).
Moreover, if $\lambda_j + c_j(x^*) >0$ for all $j \in \mathcal{I}$,
then the strict complementarity condition (SCC) is said to hold at $x^*$.
For the above Lagrange multipliers, the Lagrange function is
\[
\mathscr{L}(x) \,  \coloneqq  \, f(x)-\sum_{i \in \mathcal{E} \cup \mathcal{I}} \lambda_{i} c_{i}(x) .
\]
The subspace of the gradients of the active constraints is denoted as
\be \label{subsp:V(x*)}
V(x^*) \,  \coloneqq  \, \mbox{span}\{\nabla c_{i}(x^*), i \in J(x^*)\}.
\ee
The orthogonal complement of $V(x^*)$ is denoted as $V(x^*)^\perp$.
Under the LICQC, the second order necessary condition (SONC) holds at $x^*$, i.e.,
\be \label{opcd:SONC}
v^{\mathrm{T}}\nabla^2 \Big( \mathscr{L}(x^*) \Big) v \geq 0, \quad
\forall ~v \in V(x^*)^{\perp}.
\ee
The second order sufficient condition (SOSC) is said to hold at $x^*$ if
\be \label{opcd:SOSC}
v^{\mathrm{T}}\nabla^2 \Big( \mathscr{L}(x^*) \Big) v > 0, \quad
\forall ~ 0 \neq v \in V(x^*)^{\perp}.
\ee
If the FOOC, SCC and SOSC hold at  $x^*$, then
$x^*$ is a strict local minimizer.
We refer to \cite{Bert97,SunYuan06} for more details
about optimality conditions.

\subsection{Some basics for polynomial optimization}
\label{ssc:pre:pop}

We review some basics in real algebraic geometry and polynomial optimization. We refer to  \cite{LasBk15,Lau09,Sch09} for more details.

A subset $I  \subseteq \mathbb{R}[x]$ is called an ideal of $\re[x]$
if $I \cdot \mathbb{R}[x] \subseteq \mathbb{R}[x]$, $I+I \subseteq I$.
For a polynomial tuple $h  \coloneqq (h_1,\dots, h_m)$,
$\ideal{h}$ denotes the ideal generated by $h$, i.e.,
\begin{equation*}
	\ideal{h} \,= \, h_1 \cdot \mathbb{R}[x]+\cdots+h_m \cdot \mathbb{R}[x].
\end{equation*}
For a degree $k$, the $k$th degree truncation of $\ideal{h}$ is
\begin{equation*}
	\ideal{h}_{k} = h_1 \cdot \mathbb{R}[x]_{k-\deg(h_1)}+\cdots+
	h_m \cdot \mathbb{R}[x]_{k-\deg(h_m)}.
\end{equation*}
Its  real variety is defined as
\begin{equation*}
	V_{\mR}(h)=\{x \in \mR^n \mid h_1(x)=\cdots=h_m(x)=0\}.
\end{equation*}
We say $\ideal{h}$ is {\it real radical} if $\ideal{h}=\ideal{V_{\mR}(h)}$, where $\ideal{V_{\mR}(h)}$ denotes the set of all polynomials vanishing on $V_{\mR}(h)$.
A polynomial $p$ is said to be a sum of squares (SOS) if
$p=p_1^2+\dots+p_t^2$ for $p_1,\dots,p_t \in \mathbb{R}[x]$.
The set of all SOS polynomials in $x$ is denoted as $\Sigma[x]$.
For an even degree $k$, denote the truncation
\[
\Sigma[x]_{k} \, \coloneqq  \, \Sigma[x] \cap  \mathbb{R}[x]_{k} .
\]
For a polynomial tuple $g=(g_1,\dots,g_{\ell})$,
the  quadratic module generated by $g$ is
\be
\qmod{g} \,  \coloneqq  \,  \Sigma[x]+ g_1 \cdot \Sigma[x]+\cdots+ g_{\ell} \cdot \Sigma[x].
\ee
Similarly, for an even degree $k$, the $k$th degree truncation of $\qmod{g}$ is
\be
\qmod{g}_{k} = \Sigma[x]_{k}+ g_1 \cdot \Sigma[x]_{k-2  \lceil \deg(g_1)/2 \rceil}+\cdots+ g_{\ell} \cdot \Sigma[x]_{k-2\lceil \deg(g_{\ell}) /2\rceil}.
\ee

The sum $\ideal{h}+\qmod{g}$ is said to be archimedean if there exists
$R >0$ such that $R-\|x\|^2 \in \ideal{h}+\qmod{g}$.
If it is archimedean, then the set
\[
S \coloneqq \left\{x \in \mathbb{R}^{n} \mid h(x)=0, g(x) \geq 0\right\}
\]
must be compact. Clearly, if $p \in \ideal{h}+\qmod{g}$, then $p \ge 0$ on $S$
while the converse is not always true. However,
if $p$ is positive on $S$ and $\ideal{h}+\qmod{g}$ is archimedean, we have $p \in \ideal{h}+\qmod{g}$.
This conclusion is referred to as Putinar's Positivstellensatz.

\begin{thm}[\cite{putinar1993positive}] \label{thm2.1}
	Suppose  $\ideal{h}+\qmod{g}$ is  archimedean.
	If a polynomial $p>0$ on  $S$, then $p \in \ideal{h}+\qmod{g}$.
\end{thm}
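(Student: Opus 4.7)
The plan is to argue by contradiction. Assume $p \notin M$ where $M := \ideal{h}+\qmod{g}$; I will produce a point $x^* \in S$ with $p(x^*) \le 0$, contradicting the hypothesis $p > 0$ on $S$. The strategy is the classical real-algebraic one: separate $p$ from $M$ by a maximal quadratic module, use the induced prime ordering on a quotient ring, and exploit archimedeanness to embed the resulting ordered field into $\re$.

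First I would upgrade the archimedean hypothesis $R-\|x\|^2 \in M$ to the bounded form: for every $q \in \re[x]$ there is an integer $N_q$ with $N_q \pm q \in M$. This is a routine induction on degree using that $M$ is closed under sums and under multiplication by squares, aided by an identity such as $N-q = \tfrac{1}{2}(q-1)^2 + \tfrac{1}{2}(2N-1-q^2)$ that converts square bounds to signed bounds. Next, $M$ is proper (otherwise $-1 \in M$, hence $p \in M$), so Zorn's lemma yields a quadratic module $P \supseteq M$ maximal with respect to $p \notin P$. The central algebraic step is to show from maximality that $P \cup (-P) = \re[x]$ and that $I := P \cap (-P)$ is a prime ideal; consequently $\re[x]/I$ is an integral domain, and $P/I$ extends uniquely to an ordering on its field of fractions $F$.

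The bound from the first step descends to $F$, making the ordering archimedean, so by H{\"o}lder's theorem there is an order-preserving embedding $F \hookrightarrow \re$. Composing gives a ring homomorphism $\phi: \re[x] \to \re$ that is nonnegative on $P$. Setting $x^* := (\phi(x_1),\dots,\phi(x_n))$, we have $\phi(q) = q(x^*)$ for all $q \in \re[x]$; since each $h_i \in I$ gives $h_i(x^*)=0$, and each $g_j \in \qmod{g} \subseteq P$ gives $g_j(x^*)\ge 0$, we conclude $x^* \in S$. Because $p \notin P$ forces $-p \in P$ (using $P \cup (-P) = \re[x]$), we obtain $\phi(p) \le 0$, i.e., $p(x^*) \le 0$, the desired contradiction. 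The hard part is Step~2: showing that $P \cap (-P)$ is prime requires a delicate case analysis --- if $ab \in P \cap (-P)$ with neither $a$ nor $b$ in it, one argues that adjoining either $a$ or $-a$ to $P$ yields a strictly larger proper quadratic module still avoiding $p$, contradicting maximality. Once this algebraic separation is in place, the archimedean upgrade, H{\"o}lder embedding, and final point evaluation are essentially bookkeeping.
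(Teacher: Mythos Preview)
The paper does not give a proof of this theorem at all: it is quoted as a known result from the literature (Putinar, 1993) and serves only as background in the preliminaries. So there is nothing in the paper to compare your proposal against.

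That said, your outline is the standard real-algebraic proof (in the style of Jacobi and Prestel--Delzell, as presented for instance in Marshall's book): upgrade archimedeanness to the bounded form, pass to a maximal quadratic module, show it induces a prime ordering on a quotient, and embed the resulting archimedean ordered field into $\re$ via H\"older's theorem to produce the bad point $x^*$. One small caveat: the usual formulation takes $P$ maximal among \emph{proper} quadratic modules containing $M$ (i.e., maximal with $-1\notin P$), rather than maximal with $p\notin P$; the two lead to the same place, but the proper-maximal version makes the verification that $P\cup(-P)=\re[x]$ and that $P\cap(-P)$ is prime a bit cleaner, and then $p\notin P$ follows a posteriori from $p(x^*)\le 0$. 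With that adjustment (or a careful check that your variant also yields the prime-cone structure), the argument is correct.
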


For an integer $k>0$, let $\re^{ \N^n_{2k} }$ denote the space of all real vectors
that are labeled by $\af \in \N^n_{2k}$. Each
$y \in \re^{ \N^n_{2k} }$ is labeled as
\[
y \, = \, (y_\af)_{ \af \in \N^n_{2k} }.
\]
Such $y$ is called a
{\it truncated multi-sequence} (tms) of degree $2k$.
For a polynomial $p  = \sum_{ |\af| \le 2k } p_\af x^\af \in  \re[x]_{2k}$,
define the bilinear operation
\be \label{<f,y>}
\langle p, y \rangle \,= \, \sum_{ |\af| \le 2k } p_\af y_\af.
\ee
For given $p$, $\langle p, y \rangle$ is a linear function in $y$.
For  the degree
$t  \coloneqq  k - \lceil \deg(p)/2 \rceil$,
the localizing matrix $L_{p}^{(k)}[y]$
is a symmetric linear matrix function in $y$ such that
\be \label{df:Lf[y]}
q^T \Big( L_{p}^{(k)}[y] \Big) q \, = \,
\langle p (q^T[x]_t)^2, y \rangle
\ee
for all real vector $q \in \re^{ \N^n_t }$.
In particular, if $p=1$ is the constant one polynomial,
then $L_{1}^{(k)}[y]$ is called a moment matrix, for which we denote as
\[
M_k[y] \,  \coloneqq  \, L_{1}^{(k)}[y].
\]
%
%
Localizing matrices are quite useful for solving
polynomial and tensor optimization problems
\cite{FNZ18,HenLas05,LasBk15,Lau09,nieloc,NYZ18}.

\section{Moment-SOS relaxations for the homogenization}
\label{sc:hmg:momsos}

In this section, we give a hierarchy of Moment-SOS relaxations
based on the homogenization of \reff{1.1}
and study its properties.
Let $\tilde{x} \coloneqq (x_0,x)$.
For a polynomial $p \in \re[x]$, let $\tilde{p}(\tilde{x})$
denote the homogenization of $p$.
For the feasible set $K$ as in $(\ref{1.1})$, denote the sets
\be \label{sets:KKK}
\boxed{
	\baray{rcl}
	\widetilde{K}^h & \coloneqq & \left\{  \tilde{x}
	\left| \baray{l}
	\tilde{c}_{i}(\tilde{x})=0~(i \in \mathcal{E}), \\
	\tilde{c}_{j}(\tilde{x}) \geq 0~(j \in \mathcal{I}), \\
	x_{0} \geq 0
	\earay \right. \right\}, \\
	\widetilde{K}^c & \coloneqq &  \widetilde{K}^h \cap \{ x_0 > 0  \}, \\
	\widetilde{K}   & \coloneqq &   \widetilde{K}^h \cap \{ x_0^2 + x^Tx = 1 \} .
	\earay
}
\ee
Define the perspective projection map $\varphi$ as
\be \label{map:varphi}
\varphi:\left\{\left(x_{0}, x\right) \in \mathbb{R}^{n+1} \mid x_{0}>0,\,
x_0^2 + x^Tx = 1 \right\}
\rightarrow \mathbb{R}^{n}, \quad
\left(x_{0}, x\right) \mapsto  \frac{x}{x_{0}} .
\ee
The map $\varphi$
gives a one-to-one correspondence between
$ \widetilde{K} \cap \{ x_0 > 0\}$ and $K$.

The set $\widetilde{K}^c$ only depends on the geometry of $K$,
while $\widetilde{K}^h$ and $\widetilde{K}$ depend on
the description polynomials for $K$.
Throughout the paper, we assume their description polynomials
are  as in \reff{1.1}.
The following is a useful definition for homogenization.

\begin{defi}[\cite{njwdis}] \label{def:closed:inf}
	The set $K$ is closed at infinity ($\infty$)
	if $\cl{\widetilde{K}^c}=\widetilde{K}^h$.
\end{defi}

The above definition was introduced in \cite{njwdis}
for studying the boundary of the cone of polynomials nonnegative on $K$.
The following is a basic property for closedness at $\infty$.

\begin{lem}	\label{lemma3.1}
	For $f \in \re[x]$, we have $f \geq 0$ on $K$ if and only if
	$\tilde{f} \geq 0$ on $cl(\widetilde{K}^c)$.
	Moreover, when $K$ is closed at $\infty$, $f \geq 0$ on $K$ if and only if
	$\tilde{f} \geq 0$ on $\widetilde{K}$.
\end{lem}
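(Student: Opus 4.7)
The plan is to prove the two equivalences directly from the definition of homogenization, deducing the second statement from the first together with the hypothesis $\cl{\widetilde{K}^c}=\widetilde{K}^h$.

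First, I would establish the first equivalence: $f\geq 0$ on $K$ if and only if $\tilde f\geq 0$ on $\cl{\widetilde{K}^c}$. For the forward direction, pick any $(x_0,x)\in\widetilde{K}^c$; since $x_0>0$, the point $y\coloneqq x/x_0$ satisfies $c_i(y)=x_0^{-\deg c_i}\tilde c_i(x_0,x)=0$ for $i\in\mathcal{E}$ and $c_j(y)\geq 0$ for $j\in\mathcal{I}$, so $y\in K$. Hence $\tilde f(x_0,x)=x_0^{\deg f}f(y)\geq 0$, and passing to the closure by continuity of $\tilde f$ yields $\tilde f\geq 0$ on $\cl{\widetilde{K}^c}$. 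For the converse, take any $y\in K$ and observe that $(1,y)\in\widetilde{K}^c$ since $\tilde c_i(1,y)=c_i(y)$; then $f(y)=\tilde f(1,y)\geq 0$.

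Second, I would deduce the ``moreover'' part. When $K$ is closed at infinity, $\widetilde{K}\subseteq\widetilde{K}^h=\cl{\widetilde{K}^c}$, so the implication $f\geq 0$ on $K$ $\Rightarrow$ $\tilde f\geq 0$ on $\widetilde{K}$ is immediate from the first statement restricted to $\widetilde{K}$. For the reverse implication, given $y\in K$, set $x_0\coloneqq 1/\sqrt{1+\|y\|^2}$ and $x\coloneqq x_0 y$; this places $(x_0,x)$ in $\widetilde{K}$ (unit norm, $x_0>0$, and the homogenized constraints hold because $\tilde c_i(x_0,x)=x_0^{\deg c_i}c_i(y)$). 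Hence $\tilde f(x_0,x)=x_0^{\deg f}f(y)\geq 0$, and since $x_0>0$ we conclude $f(y)\geq 0$.

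The argument is largely routine, and the only subtle point — the reason the hypothesis ``closed at infinity'' appears — is that in the forward direction of the second equivalence one must control $\tilde f$ at boundary points $(0,x)\in\widetilde{K}$ where $x_0=0$; the identity $\cl{\widetilde{K}^c}=\widetilde{K}^h$ is precisely what allows these ``points at infinity'' to be approximated by points in $\widetilde{K}^c$, where the argument via $y=x/x_0$ applies and extends by continuity. I do not anticipate any real obstacle beyond keeping the three sets $\widetilde{K},\widetilde{K}^c,\widetilde{K}^h$ and the role of the norm constraint straight in the bookkeeping.
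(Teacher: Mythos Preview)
Your proof is correct and complete; the paper itself does not supply a proof for this lemma, treating it as a basic property of homogenization (introduced in the reference \cite{njwdis}). Your argument is exactly the standard one, and your closing remark correctly identifies why the closedness-at-infinity hypothesis is needed only for the forward direction of the second equivalence.
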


Being closed at $\infty$ is a generic property
for semialgebraic sets (see \cite{guo2014minimizing}).
We remark that the closedness at $\infty$
may depend on the choice of description polynomials for $K$.
For instance, the following two sets
\[
S_1=\{(x_1,x_2)\in \mR^2: x_1-x_2^2 \geq 0  \}, \quad
S_2=\{(x_1,x_2)\in \mR^2: x_1-x_2^2 \geq 0 , x_1 \geq 0 \}
\]
are the same. However, for their description polynomials,
the set $S_2$ is closed at $\infty$ while $S_1$ is not,
since $(0, -1,0) \in \widetilde{S}_1^h \backslash \cl{\widetilde{S}_1^c}$.
Throughout the paper, when we mention $K$ is closed at $\infty$,
it means that their description polynomials as in \reff{1.1} are used.

In view of the minimum value,
the optimization $(\ref{1.1})$  is equivalent to
\begin{equation}     \label{3.1}
	\left\{ \baray{rl}
	\max  &\gamma \\
	\st &f(x)-\gamma \geq 0 \quad \mbox{on} \quad  K.
	\earay \right.
\end{equation}
Let $d  \coloneqq \deg(f)$. When $K$ is closed at $\infty$,
one has $f-\gamma \ge 0$ on $K$ if and only if
$\tilde{f}(\tilde{x}) - \gamma x_0^d \ge 0$ on $\widetilde{K}$.
Therefore, \reff{3.1} is equivalent to
\begin{equation}  \label{3.2}
	\left\{ \baray{rl}
	\max  &\gamma \\
	\st & \tilde{f}(\tilde{x}) - \gamma x_0^d \geq 0 \
	\quad  \mbox{on} \quad \widetilde{K}.
	\earay \right.
\end{equation}
For convenience of notation, denote the polynomial sets
\be \label{sets:tld:ceqcin}
\tilde{c}_{\mathcal{E}}  \coloneqq  \big\{ \tilde{c_i}(\tilde{x}) \big\}_{i\in \mc{E} }
\cup \{ \|\tilde{x}\|^2-1 \}, \quad
\tilde{c}_{\mathcal{I}}  \coloneqq  \big\{ \tilde{c_j}(\tilde{x}) \big\}_{j \in \mc{I} }
\cup \{ x_0 \}.
\ee
Note that $\widetilde{K}$ is compact and
$\ideal{\tilde{c}_{\mathcal{E}}}+\qmod{\tilde{c}_{\mathcal{I}}}$ is archimedean for all $K$.

We apply Moment-SOS relaxations to solve \reff{3.2}.
For an order $k \geq \lceil \frac{d}{2}\rceil$, the $k$th order SOS relaxation
for \reff{3.2} is
\begin{equation}  \label{3.3}
	\left\{ \baray{rl}
	\max &   \gamma \\
	\st &  \tilde{f}(\tilde{x})- \gamma x_0^d \in
	\ideal{\tilde{c}_{\mathcal{E}}}_{2k} +\qmod{\tilde{c}_{\mathcal{I}}}_{2k}.
	\earay \right.
\end{equation}
The dual optimization of $(\ref{3.3})$ is the $k$th order moment relaxation
\begin{equation}      \label{d3.3}
	\left\{ \baray{rl}
	\min  &  \langle \tilde{f}, y \rangle  \\
	\st  & L_{p}^{(k)}[y] =0\, ~(p \in \tilde{c}_{\mathcal{E}}),\\
	& L_{q}^{(k)}[y] \succeq 0\, ~(q \in \tilde{c}_{\mathcal{I}}), \\
	&  M_k[y] \succeq 0, \\
	& \langle x_0^d, y \rangle = 1, \,
	y \in \re^{ \N^{n+1}_{2k} }.
	\earay \right.
\end{equation}
Let $f_k$ and $f^{\prime}_k$ denote the optimal values of
\reff{3.3}, \reff{d3.3} respectively.
As $k$ goes to infinity, the sequence of the relaxations
\reff{3.3}-\reff{d3.3} is called the homogenized hierarchy
of Moment-SOS relaxations for solving \reff{1.1}.

\subsection{Basic properties}
\label{ssc:pro:hmg}

When $K$ is closed at $\infty$,
the optimal values of \reff{3.1} and \reff{3.2} are the same.
This is a basic property of homogenization.
The following is implied by Lemma~\ref{lemma3.1}.

\begin{prop} \label{prop3.3}
	If $K$ is closed at $\infty$, then the optimal values of
	\reff{3.1} and \reff{3.2} are the same.
\end{prop}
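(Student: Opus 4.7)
The plan is to reduce the proposition to a direct application of Lemma~\ref{lemma3.1} to the shifted objective $f-\gamma$. Both optimization problems have the form of computing the largest valid lower bound: \reff{3.1} seeks the largest $\gamma$ with $f-\gamma \ge 0$ on $K$, while \reff{3.2} seeks the largest $\gamma$ with $\tilde{f}(\tilde{x}) - \gamma x_0^d \ge 0$ on $\widetilde{K}$. So it suffices to show that, for every $\gamma \in \mathbb{R}$, the two constraints are equivalent.

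First, I would verify that $\tilde{f}(\tilde{x}) - \gamma x_0^d$ is precisely the degree-$d$ homogenization of $f-\gamma$. Assuming $d = \deg(f) \ge 1$, subtracting a constant does not alter the leading-degree terms, so $\deg(f-\gamma) = d$ and, by the definition of the tilde operation,
\[
\widetilde{f-\gamma}(\tilde{x}) \;=\; x_0^d\bigl(f(x/x_0) - \gamma\bigr) \;=\; \tilde{f}(\tilde{x}) - \gamma x_0^d.
\]
The degenerate case $d=0$, where $f$ is a constant, reduces both problems to the trivial $\max\{\gamma : \gamma \le f\}$ and can be dispatched separately.

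Next, I would apply Lemma~\ref{lemma3.1} with $p = f-\gamma$. Since $K$ is closed at $\infty$ by hypothesis, the lemma yields
\[
f - \gamma \ge 0 \text{ on } K \quad \Longleftrightarrow \quad \widetilde{f-\gamma} \ge 0 \text{ on } \widetilde{K}.
\]
Combined with the identification above, the feasible sets of \reff{3.1} and \reff{3.2}, viewed as subsets of $\mathbb{R}$ in the variable $\gamma$, coincide, and hence so do their suprema.

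There is no substantial obstacle here: once Lemma~\ref{lemma3.1} is available, the proposition is a one-line corollary. The only care needed is the bookkeeping of degrees, namely that the homogenization implicit in \reff{3.2} uses degree $\deg(f)$, which matches $\deg(f-\gamma)$ because $\gamma$ is a scalar.
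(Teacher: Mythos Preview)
Your proposal is correct and matches the paper's approach exactly: the paper simply states that the proposition is implied by Lemma~\ref{lemma3.1}, and your argument is precisely the natural unpacking of that implication via the identity $\widetilde{f-\gamma}=\tilde f-\gamma x_0^d$.
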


When the degrees of $f$, $c_j$ ($j \in \mathcal{I}$) are all even,
the constraint $x_0\geq 0$ is redundant for  homogenization.
Consider the optimization problem
\be  \label{lo3.5}
\left\{ \baray{rl}
\max  &\gamma \\
\st & \tilde{f}(\tilde{x}) - \gamma x_0^d \geq 0 \
\quad  \mbox{on} \quad \widetilde{K}^{e},
\earay \right.
\ee
where the set $\widetilde{K}^{e}$ is
\be
\baray{rcl}
\widetilde{K}^{e}  \coloneqq & \left\{  \tilde{x} \in \mathbb{R}^{n+1}
\left| \baray{l}
\tilde{c}_{i}(\tilde{x})=0~(i \in \mathcal{E}), \\
\tilde{c}_{j}(\tilde{x}) \geq 0~(j \in \mathcal{I}), \\
x_0^2 + x^Tx = 1
\earay \right. \right\}. \\
\earay
\ee
Suppose $K$ is closed at $\infty$ and $f\geq 0$ on $K$.
By Lemma~\ref{lemma3.1}, we have $\tilde{f}(\tilde{x})\geq0$ for $\tilde{x}=(x_0,x) \in \widetilde{K}^{e}$
with $x_0\geq 0$. For $\tilde{x} \in \widetilde{K}^{e}$ with $x_0< 0$,
we have $\tilde{c}_i(\tilde{x})=(x_0)^{\deg(c_i)} c_i(\frac{x}{x_0})$
for each $i \in \mathcal{E} \cup \mathcal{I}$.
This implies that  $\frac{x}{x_0} \in K$ and
$\tilde{f}(\tilde{x})=x_0^{d}f(\frac{x}{x_0})\geq0$ for all $x_0 < 0$,
when the degrees of $f$, $c_j$ ($j \in \mathcal{I}$) are even.
Thus, the following holds.

\begin{prop} \label{prop3.4}
	If the set $K$ is closed at $\infty$ and the degrees of
	$f$, $c_j$ ($j \in \mathcal{I}$) are even, then the optimal values of
	\reff{3.1} and \reff{lo3.5} are the same.
\end{prop}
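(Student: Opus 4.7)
The plan is to reduce the statement to Proposition~\ref{prop3.3} and then handle the extra region $\widetilde{K}^{e} \setminus \widetilde{K} = \widetilde{K}^{e} \cap \{x_{0} < 0\}$ by a sign-flip argument. Since $\widetilde{K} \subseteq \widetilde{K}^{e}$, every $\gamma$ that is feasible for \reff{lo3.5} is automatically feasible for \reff{3.2}, so by Proposition~\ref{prop3.3} the optimal value of \reff{lo3.5} is at most that of \reff{3.1}.

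For the reverse inequality, I would fix any $\gamma$ feasible for \reff{3.1}, equivalently (by Proposition~\ref{prop3.3}) for \reff{3.2}, and check that $\tilde{f}(\tilde{x}) - \gamma x_0^d \ge 0$ pointwise on $\widetilde{K}^{e}$. Points with $x_{0} \ge 0$ already lie in $\widetilde{K}$, so there is nothing to prove. For $\tilde{x} = (x_{0},x) \in \widetilde{K}^{e}$ with $x_{0} < 0$, set $y \coloneqq x/x_{0}$ and use $\tilde{c}_{i}(\tilde{x}) = x_{0}^{\deg c_{i}}\, c_{i}(y)$. For $i \in \mathcal{E}$ this forces $c_{i}(y) = 0$; for $j \in \mathcal{I}$, the evenness of $\deg c_{j}$ makes $x_{0}^{\deg c_{j}} > 0$, so $c_{j}(y) \ge 0$. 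Hence $y \in K$ and $f(y) \ge \gamma$. Finally,
\[
\tilde{f}(\tilde{x}) - \gamma x_{0}^{d} \,=\, x_{0}^{d}\bigl(f(y) - \gamma\bigr),
\]
and since $d = \deg f$ is even, the factor $x_{0}^{d}$ is positive, which gives the desired inequality.

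There is no essential obstacle here; the only point to keep track of carefully is the sign of $x_{0}^{\deg(\cdot)}$ when $x_{0} < 0$. The assumption that all of $\deg f$ and $\deg c_{j}$ ($j \in \mathcal{I}$) are even is used precisely to kill every would-be negative factor in the homogenizations, so that both the objective and the inequality constraints behave consistently under the sign flip $\tilde{x} \mapsto -\tilde{x}$; for equality constraints, evenness is not required because $c_{i}(y) = 0$ regardless of the sign of $x_{0}^{\deg c_{i}}$.
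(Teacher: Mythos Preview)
Your proposal is correct and follows essentially the same approach as the paper: reduce to Proposition~\ref{prop3.3} (equivalently Lemma~\ref{lemma3.1}) for the region $x_{0}\ge 0$, and for $x_{0}<0$ use the homogenization identity $\tilde{c}_{i}(\tilde{x})=x_{0}^{\deg c_{i}}c_{i}(x/x_{0})$ together with the evenness of $\deg f$ and $\deg c_{j}$ ($j\in\mathcal{I}$) to conclude $x/x_{0}\in K$ and $\tilde{f}(\tilde{x})-\gamma x_{0}^{d}=x_{0}^{d}(f(x/x_{0})-\gamma)\ge 0$. The paper's argument is slightly more terse, shifting to $f\ge 0$ rather than $f-\gamma\ge 0$, but the content is identical.
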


We remark that if one of $f$ and $c_j$ ($j \in \mathcal{I}$)
has an odd degree, then \reff{3.1} and \reff{lo3.5}
may not have the same optimal value.
For instance, consider the optimization
\be  \nn
\min  \quad x  \quad
\st  \quad 1-x^2 \geq 0, \, x \in \re.
\ee
The minimum value $f_{\min}=-1$ and
$\widetilde{K}^{e} = \{ x_0^2 \ge x^2,\, x_0^2 + x^2 = 1 \}$.
However, the optimal value of \reff{lo3.5}
is $-\infty$, since there is no scalar $\gamma$
such that $x - \gamma x_0 \ge 0$ on $\widetilde{K}^{e}$.

Suppose the optimal value $f_{\min} > -\infty$.
A point $x^* \in K$ is a  minimizer of $(\ref{1.1})$
if and only if $f(x^*) -f_{\min}=0$, which is equivalent to
\be \label{hom:eq}
\tilde{f}(\tilde{x}^*)- f_{\min} \cdot (\tilde{x}^*_0)^d=0
\ee
for the point
$
\tilde{x}^* \coloneqq (1+\|x^*\|^2)^{-\half} (1, x^*) .
$
By the perspective projection $\varphi$ as in \reff{map:varphi},
it is easy to see there exists a one-to-one correspondence between
minimizers of $(\ref{1.1})$ and feasible points of \reff{hom:eq} with $\tilde{x}=(x_0,x) \in \widetilde{K}$, $x_0>0$.
However, the system \reff{hom:eq} may have feasible points of the form
$\tilde{x}^*=(0, u)$. For such a case, it does not give a minimizer of \reff{1.1}.

\begin{defi} \rm
	A point $x^*$ is said to be a minimizer at infinity
	for \reff{1.1} if $\tilde{x}^*=(0, x^*)$
	satisfies \reff{hom:eq} and $\tilde{x}^* \in  \widetilde{K}$.
\end{defi}

Recall that for a polynomial $p$, $p^{(1)}$ denotes the homogeneous part of
the highest degree for $p$. For convenience, denote the set
\be \label{set:K:hom}
K^\hm  \coloneqq
\left\{x \in \mathbb{R}^n
\left| \baray{l}
c^\hm_{i}(x)=0~(i \in \mathcal{E}), \\
c^\hm_{j}(x) \geq 0~(j \in \mathcal{I}), \\
\|x\|^2-1=0
\earay \right.
\right \}.
\ee

The following is a basic property of homogenization.

\begin{theorem}  \label{thm:prop:inf}
	Suppose  the minimum value $f_{\min} > -\infty$.
	Then, we have:
	\bit
	
	\item [(i)] A point $x^*$ is a minimizer at $\infty$ for \reff{1.1}
	if and only if $f^\hm(x^*)=0$ and $x^* \in K^\hm$.
	
	\item [(ii)] If the minimum value $f_{\min}$ is not achievable for \reff{1.1},
	then \reff{1.1} has a minimizer at $\infty$.
	
	\item [(iii)] Suppose $K$ is closed at $ \infty$. Then
	the form $f^\hm \geq 0$ on $K^\hm$.
	In particular, if  $f^\hm > 0$ on $K^\hm$,
	then there are no minimizers at $\infty$.
	
	\eit
\end{theorem}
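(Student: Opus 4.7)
The overall plan is to handle the three parts in sequence, with (i) an unwinding of definitions, (ii) a compactness-plus-limit argument on a minimizing sequence, and (iii) a direct appeal to Lemma~\ref{lemma3.1}. For part (i), I would simply observe that at a point of the form $\tilde{x}^*=(0,x^*)$ the identities $\tilde{f}(0,x^*)=f^\hm(x^*)$ and $\tilde{c}_i(0,x^*)=c_i^\hm(x^*)$ hold by construction (only the top-degree monomials survive when $x_0=0$). The equation \reff{hom:eq} at such $\tilde{x}^*$ reduces to $f^\hm(x^*)=0$, and membership in $\widetilde{K}$ together with $x_0^*=0$ is exactly the defining system of $K^\hm$; both implications then follow.

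For part (ii), I would build a minimizer at infinity from any minimizing sequence. Take $\{x^{(k)}\}\subset K$ with $f(x^{(k)})\to f_{\min}$; since $K$ is closed in $\mathbb{R}^n$ and $f_{\min}$ is not achieved, $\{x^{(k)}\}$ admits no accumulation point, so $\|x^{(k)}\|\to\infty$. I would lift to $\tilde{x}^{(k)}\coloneqq (1+\|x^{(k)}\|^2)^{-1/2}(1,x^{(k)})\in\widetilde{K}$, and use compactness of $\widetilde{K}$ to extract a subsequence converging to some $\tilde{x}^*=(0,x^*)\in\widetilde{K}$ with $\|x^*\|=1$ (the $x_0$-coordinate must vanish in the limit because $\|x^{(k)}\|\to\infty$). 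Finally the identity $\tilde{f}(\tilde{x}^{(k)})=(x_0^{(k)})^d f(x^{(k)})$ shows $\tilde{f}(\tilde{x}^{(k)})\to 0\cdot f_{\min}=0$, while by continuity the same quantity tends to $\tilde{f}(0,x^*)=f^\hm(x^*)$; therefore $f^\hm(x^*)=0$, and part (i) identifies $x^*$ as a minimizer at infinity.

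For part (iii), I would apply Lemma~\ref{lemma3.1} to the polynomial $f-f_{\min}\geq 0$ on $K$: closedness at $\infty$ upgrades this to $\tilde{f}(\tilde{x})-f_{\min}x_0^d\geq 0$ on $\widetilde{K}$, and evaluating at $(0,x)$ for any $x\in K^\hm\subseteq\widetilde{K}$ kills the $f_{\min}x_0^d$ term and yields $f^\hm(x)\geq 0$. The final ``in particular'' assertion is immediate from part (i), since the vanishing condition $f^\hm(x^*)=0$ cannot be met when $f^\hm$ is strictly positive on $K^\hm$.

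The main subtlety will be in the limit passage of part (ii): one must verify that the candidate limit $\tilde{x}^*$ really lies in $\widetilde{K}$ (which follows from continuity of the $\tilde{c}_i,\tilde{c}_j$ and closedness of the remaining defining constraints) and that the seemingly indeterminate product $(x_0^{(k)})^d f(x^{(k)})$ actually tends to zero; the latter step uses crucially that $f_{\min}$ is finite, so that $f(x^{(k)})$ remains bounded while $(x_0^{(k)})^d\to 0$. Without this boundedness one would have to invoke a different compactification argument, which is why the finiteness of $f_{\min}$ is an unavoidable hypothesis.
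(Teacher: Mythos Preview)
Your proposal is correct and follows essentially the same approach as the paper: part (i) by unwinding definitions, part (ii) by normalizing a minimizing sequence and passing to a limit on the sphere, and part (iii) via the closedness-at-infinity hypothesis. The only cosmetic differences are that in (ii) the paper normalizes by $x^{(k)}/\|x^{(k)}\|$ rather than your lift $(1+\|x^{(k)}\|^2)^{-1/2}(1,x^{(k)})$, and in (iii) the paper unfolds the definition of closedness at $\infty$ by hand (approximating $(0,u)\in\widetilde{K}^h$ by points in $\widetilde{K}^c$) whereas you invoke Lemma~\ref{lemma3.1} directly---your route is slightly cleaner since that lemma is already available.
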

\begin{proof}
	(i) For $\tilde{x}^*  \coloneqq (0,x^*)$,
	note that $\tilde{c}_i(\tilde{x}^*)=c_i^\hm(x^*)$ for all $i$.
	So, $\tilde{x}^* \in \widetilde{K}$ if and only if $x^* \in K^\hm$.
	Since  $f_{\min} > -\infty$, one can see that
	\[
	0 = \tilde{f}(\tilde{x}^*)- f_{\min} \cdot (\tilde{x}^*_0)^d
	\, = \, f^\hm(x^*) .
	\]
	So the conclusion is true.
	
	(ii) Since the optimal value is not achievable, the set
	$K$ is unbounded and there is a sequence
	$\{ x^{(k)} \}_{k=1}^\infty \subseteq K$ such that
	$f(x^{(k)}) \rightarrow f_{\min}$
	and $\|x^{(k)}\| \rightarrow \infty$.
	Let $y^{(k)}  \coloneqq  x^{(k)}/\|x^{(k)}\|$ be the normalization.
	Without loss of generality, one can further assume $y^{(k)} \rightarrow y^*$. Clearly, $\|y^*\|=1$, and
	\[
	f^\hm(y^*) =\lim\limits_{k \rightarrow \infty}f^\hm(y^{(k)})=
	\lim\limits_{k \rightarrow \infty}  f(x^{(k)})/\|x^{(k)}\|^d = 0.
	\]
	Similarly, one can show that
	\[
	c_i^\hm(y^*)
	=0~ (i \in \mc{E}), \quad
	c_i^\hm(y^*)
	\geq 0\,~ (j \in \mc{I}).
	\]
	So $y^* \in K^\hm$ and $y^*$ is a minimizer at $\infty$,
	by the conclusion in item (i).
	
	(iii)
	If $K^\hm = \emptyset$, the conclusion is clearly true.
	We consider the case that $K^\hm \neq \emptyset$. For each $u \in K^\hm$,
	we have $\tilde{u}  \coloneqq (0,u) \in \widetilde{K}^h$.
	Since $K$ is closed at $\infty$, there exists a sequence  of
	$\tilde{u}^{(k)} =(u_0^{(k)}, u^{(k)}) \in \widetilde{K}^c$
	such that $\tilde{u}^{(k)} \rightarrow \tilde{u}$ and each $u_0^{(k)}>0$.
	Note that $u^{(k)}/u_0^{(k)} \in K$ and
	\[
	\tilde{f}(\tilde{u}^{(k)}) - f_{\min} \cdot (u_0^{(k)})^d =
	(u_0^{(k)})^d(f (u^{(k)}/u_0^{(k)})-  f_{\min}) \geq 0.
	\]
	Letting $k \rightarrow \infty$, we get $f^\hm(u) \geq 0$
	for every $u \in K^\hm$, hence $f^\hm \ge 0$ on $K^\hm$.
	
	When the form  $f^\hm > 0$ on $K^\hm$,
	there are no minimizers at $\infty$.
	This is implied by the item (i).
\end{proof}

\subsection{Asymptotic convergence}
\label{ssc:asycvg}

The asymptotic convergence of the Moment-SOS hierarchy of
\reff{3.3}-\reff{d3.3} can be shown under the following condition.
%
%

\begin{defi} \label{def:pos:inf}
	A polynomial $p$ is said to be positive at $\infty$ on $K$
	if its highest degree homogeneous part $p^\hm >0$ on $K^\hm$.
\end{defi}

This condition has appeared in \cite{DPP,marshall2006}
to study different properties of nonnegative polynomials on unbounded sets.
If the objective $f$ is positive at $\infty$,
then $f$ is coercive on $K$, i.e., for every value $\vartheta \in \re$,
the sublevel set $\{ x \in K: \, f(x) \le \vartheta \}$
is compact. This is shown in the following lemma.

\begin{lem}  \label{4.1}
	If $f$ is positive at $\infty$ on $K$, then $f$ is coercive on $K$.	
\end{lem}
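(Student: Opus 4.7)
The plan is to argue by contradiction. Suppose $f$ is not coercive on $K$. Since $K$ is closed (as the zero set of continuous functions intersected with closed half-spaces) and $f$ is continuous, each sublevel set $\{x \in K : f(x) \le \vartheta\}$ is closed, so failure of coercivity means some sublevel set is unbounded. Hence there exist $\vartheta \in \mathbb{R}$ and a sequence $\{x^{(k)}\} \subseteq K$ with $f(x^{(k)}) \le \vartheta$ and $\|x^{(k)}\| \to \infty$.

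Next I would normalize: set $y^{(k)} \coloneqq x^{(k)}/\|x^{(k)}\|$, which lies on the unit sphere. By passing to a subsequence, we may assume $y^{(k)} \to y^*$ with $\|y^*\| = 1$. The idea is to show $y^* \in K^\hm$ and $f^\hm(y^*) \le 0$, contradicting the hypothesis $f^\hm > 0$ on $K^\hm$.

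To verify $y^* \in K^\hm$, I would take each constraint $c_i$, write $c_i = c_i^\hm + (\text{lower order terms})$, and divide by $\|x^{(k)}\|^{\deg(c_i)}$. For $i \in \mathcal{E}$, the identity $c_i(x^{(k)}) = 0$ becomes $c_i^\hm(y^{(k)}) + O(1/\|x^{(k)}\|) = 0$; letting $k \to \infty$ gives $c_i^\hm(y^*) = 0$. For $j \in \mathcal{I}$, the inequality $c_j(x^{(k)}) \ge 0$ gives analogously $c_j^\hm(y^*) \ge 0$. Combined with $\|y^*\|^2 - 1 = 0$, this yields $y^* \in K^\hm$.

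For the final contradiction, I would do the same rescaling for the objective: dividing $f(x^{(k)}) \le \vartheta$ by $\|x^{(k)}\|^d$ (with $d = \deg(f)$) gives
\[
f^\hm(y^{(k)}) + O(1/\|x^{(k)}\|) \;\le\; \vartheta/\|x^{(k)}\|^d,
\]
and letting $k \to \infty$ produces $f^\hm(y^*) \le 0$, contradicting positivity at infinity. The argument is entirely standard and there is no real obstacle beyond being careful to check that the lower-order correction terms all vanish in the limit after the correct normalization by the degree of each polynomial.
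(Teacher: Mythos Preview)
Your proposal is correct and follows essentially the same contradiction-by-normalization argument as the paper. The only cosmetic difference is that the paper normalizes via the homogenized point $\tilde{u}^{(k)} = (1,u^{(k)})/\sqrt{1+\|u^{(k)}\|^2} \in \widetilde{K}$ in $\mathbb{R}^{n+1}$ and then reads off $f^\hm(u^*) = \tilde{f}(\tilde{u}^*)$, whereas you work directly with $y^{(k)} = x^{(k)}/\|x^{(k)}\|$ in $\mathbb{R}^n$; the limiting computations are identical.
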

\begin{proof}
	When $K$ is compact, the conclusion is clearly true.
	Consider the case that $K$ is unbounded.
	Suppose otherwise $f$ was not coercive on $K$,
	then there exist a value $\vartheta$ and  a sequence
	$\{ u^{(k)} \}_{k=1}^\infty \subseteq K$ such that
	\[
	\| u^{(k)} \|  \rightarrow \infty, \quad
	f(u^{(k)}) \leq \vartheta \,\,\, \mbox{for all} \, k .
	\]
	Let $\tilde{u}^{(k)}  \coloneqq  (1,u^{(k)})/\sqrt{1+\|u_k\|^2}$,
	then $\tilde{u}^{(k)} \in \widetilde{K}$ and $\|\tilde{u}^{(k)}\|=1$
	for all $k$. Without loss of generality, we can further assume that
	$\tilde{u}^{(k)} \rightarrow \tilde{u}^* \coloneqq  (0,u^*) \in \widetilde{K}$
	as $k \rightarrow \infty$. Note that $\|u^*\|=1$,  and
	\[
	f^\hm(u^*)=\tilde{f}(\tilde{u}^*)=
	\lim\limits_{k \rightarrow \infty} \tilde{f}(\tilde{u}^{(k)})=
	\lim\limits_{k \rightarrow \infty}
	\frac{f(u^{(k)})}{\sqrt{1+\|u^{(k)}\|^2}^d} \le 0.
	\]
	One can similarly show that
	$c_i^\hm(u^*)= 0$ for every $i \in \mathcal{E}$ and
	$c_j^\hm(u^*) \geq 0$ for every $j \in \mathcal{I}$.
	Hence, we get $u^* \in K^\hm$ and $f^\hm(u^*) \leq 0$,
	a contradiction to the positivity of $f$ at $\infty$.
	So $f$ must be coercive on $K$.
\end{proof}

We remark that if $f$ is coercive on $K$,
it may not be positive at $\infty$ on $K$.
For instance, the polynomial $f=x_1^4 +x_2^2$
is not positive at $\infty$
for $K = \re^2$, but it is coercive.
It is typically a hard question to check coercivity,
as shown in \cite{ahmadi2019complexity}.
Coercivity of polynomials is also studied in
\cite{Tom2015Coercive,JLL14}.
The coercivity is sufficient but not necessary
for the optimal value to be achievable.

When $f$ is positive at $\infty$ on $K$,
the hierarchy of relaxations \reff{3.3}
has asymptotic convergence.
This is shown as follows.

\begin{thm}
	If $f$ is positive at $\infty$ on $K$,
	then the optimization \reff{1.1} achieves the minimum value
	and $f_k \rightarrow f_{\min}$ as $k \to \infty$.
\end{thm}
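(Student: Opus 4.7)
The plan is to deduce achievability from coercivity via Lemma~\ref{4.1}, then to sandwich $f_k$ between $f_{\min}$ above (by weak duality) and arbitrarily close approximations below (via Putinar's Positivstellensatz applied on the compact set $\widetilde{K}$). Since $f$ is positive at $\infty$, Lemma~\ref{4.1} yields that $f$ is coercive on $K$. Assuming $K \ne \emptyset$ (else $f_{\min}=+\infty$ by convention and the statement is vacuous), pick any $x^{(0)} \in K$; the sublevel set $\{x \in K : f(x) \le f(x^{(0)})\}$ is a nonempty compact set on which the continuous function $f$ attains its minimum, and this is the global minimum on $K$.

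For the easy bound $f_k \le f_{\min}$, I would take any feasible $\gamma$ in \reff{3.3}; the membership $\tilde{f}(\tilde{x}) - \gamma x_0^d \in \ideal{\tilde{c}_{\mathcal{E}}}_{2k} + \qmod{\tilde{c}_{\mathcal{I}}}_{2k}$ implies that $\tilde{f}(\tilde{x}) - \gamma x_0^d \ge 0$ on $\widetilde{K}$. For each $x \in K$, substituting $\tilde{x} = (1,x)/\sqrt{1+\|x\|^2} \in \widetilde{K}$ and dividing through by the positive factor $(1+\|x\|^2)^{-d/2}$ yields $f(x) \ge \gamma$, whence $\gamma \le f_{\min}$ and therefore $f_k \le f_{\min}$ for every $k$.

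The substantive direction $\liminf_k f_k \ge f_{\min}$ is where the hypothesis really enters, and constitutes the main obstacle. Fix any $\gamma < f_{\min}$; I would show that $\tilde{f}(\tilde{x}) - \gamma x_0^d > 0$ strictly on the compact set $\widetilde{K}$. If $\tilde{x} = (x_0,x) \in \widetilde{K}$ with $x_0 > 0$, the homogenization identities $\tilde{c}_i(\tilde{x}) = x_0^{\deg c_i}\, c_i(x/x_0)$ give $x/x_0 \in K$, so
\[
\tilde{f}(\tilde{x}) - \gamma x_0^d \,=\, x_0^d\bigl(f(x/x_0) - \gamma\bigr) \,>\, 0 .
\]
If $x_0 = 0$, then $\|\tilde{x}\|^2 = 1$ forces $\|x\| = 1$, and the identities $\tilde{c}_i((0,x)) = c_i^\hm(x)$ place $x \in K^\hm$; the positivity-at-infinity hypothesis $f^\hm > 0$ on $K^\hm$ then delivers $\tilde{f}((0,x)) - \gamma \cdot 0 = f^\hm(x) > 0$. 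This boundary case is exactly where positivity at $\infty$ is indispensable, and is the crux of the argument.

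Since $\|\tilde{x}\|^2 - 1 \in \tilde{c}_{\mathcal{E}}$, the sum $\ideal{\tilde{c}_{\mathcal{E}}} + \qmod{\tilde{c}_{\mathcal{I}}}$ is archimedean, so Putinar's Positivstellensatz (Theorem~\ref{thm2.1}) yields an order $k$ such that $\tilde{f}(\tilde{x}) - \gamma x_0^d \in \ideal{\tilde{c}_{\mathcal{E}}}_{2k} + \qmod{\tilde{c}_{\mathcal{I}}}_{2k}$, making $\gamma$ feasible for \reff{3.3} at order $k$ and giving $f_k \ge \gamma$. Letting $\gamma \nearrow f_{\min}$ yields $\liminf_k f_k \ge f_{\min}$, which combined with the upper bound from the second paragraph completes the proof.
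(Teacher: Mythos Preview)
Your proof is correct and follows essentially the same approach as the paper: coercivity from Lemma~\ref{4.1} gives achievability, strict positivity of $\tilde{f}-\gamma x_0^d$ on $\widetilde{K}$ is verified by splitting into the cases $x_0>0$ and $x_0=0$, Putinar's Positivstellensatz on the archimedean side furnishes the lower bound, and the upper bound $f_k\le f_{\min}$ comes from evaluating on $\widetilde{K}$. Your write-up is in fact slightly more explicit than the paper's (e.g.\ spelling out the sublevel-set compactness and the substitution for the upper bound).
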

\begin{proof}
	Since $f$ is positive at $\infty$ on $K$,
	Lemma~\ref{4.1} implies that $f$ is coercive on $K$,
	so \reff{1.1} must achieve its minimum value $f_{\min} > -\infty$
	and it has minimizers.
	For every scalar $\gamma < f_{\min}$, we show that
	$\tilde{f}(\tilde{u})- \gamma u_0^d >0$ for all
	$\tilde{u}  \coloneqq  (u_0, u)  \in \widetilde{K}$.
	If $u_0 = 0$, then $u \in K^\hm$ and $\tilde{f}(\tilde{u}) = f^\hm(u) > 0$,
	since $f$ is positive at $\infty$ on $K$.
	If $u_0 > 0$, then $u/u_0 \in K$ and
	\[
	\tilde{f}(\tilde{u}) - \gamma (u_0)^d = u_0^{d}(f(u/u_0 )-\gamma) > 0.
	\]
	Note that $\ideal{\tilde{c}_{\mathcal{E}}}+\qmod{\tilde{c}_{\mathcal{I}}}$ is archimedean,
	due to the sphere constraint $x_0^2 + x^Tx = 1$.
	By Theorem \ref{thm2.1}, we have
	$\tilde{f}(\tilde{x})- \gamma x_0^d \in
	\ideal{\tilde{c}_{\mathcal{E}}}+\qmod{\tilde{c}_{\mathcal{I}}}$.
	Thus, when $k$ is sufficiently large, we get
	$\tilde{f}(\tilde{x})- \gamma x_0^d \in
	\ideal{\tilde{c}_{\mathcal{E}}}_{2k}+\qmod{\tilde{c}_{\mathcal{I}}}_{2k}$.
	This is true for every $\gamma < f_{\min}$.
	On the another hand, if $\gamma$ is feasible for \reff{3.3},
	we must have $\tilde{f}(\tilde{x})- \gamma x_0^d \geq 0$ on $\widetilde{K}$,
	which implies that $f-\gamma \geq 0$ on $K$ and hence $\gamma \leq f_{\min}$.
	This shows that $f_k \rightarrow f_{\min}$ as $k \to \infty$.
\end{proof}

The following is an example for the hierarchy of \reff{3.3}-\reff{d3.3}.
\begin{exm} \label{example3.3}
	Consider the optimization problem
	\[
	\left\{ \baray{rl}
	\min &  x_1+x_2  \\
	\st & x_1^3+x_2 +1  \geq 0, \, x_2^3-x_1+1 \geq 0.
	\earay \right.
	\]
	The feasible set $K$ is unbounded. One can check that the minimum value
	and the unique minimizer are respectively
	\[
	f_{\min}=-1-\frac{2\sqrt{3}}{9}, \quad
	x^*=(-\frac{\sqrt{3}}{3},-1+\frac{\sqrt{3}}{9}).
	\]
	Note that $f^\hm = x_1+x_2$ and
	\[
	K^\hm \,=\, \{x_1^3 \geq 0, x_2^3 \geq 0, x_1^2+x_2^2=1 \} .
	\]
	The form $f^\hm$ is positive on $K^\hm$.
	The hierarchy of \reff{3.3}-\reff{d3.3} has the asymptotic convergence.
	Interestingly, it also has finite convergence.
	This can be implied by Theorem~\ref{thm3.12}.
\end{exm}

\section{Optimality conditions and finite convergence }
\label{sc:ficvg}

Optimality conditions are closely related to finite convergence of
the classical Moment-SOS hierarchy in \cite{Las01}.
Under the archimedeanness for constraining polynomials,
the Moment-SOS hierarchy has finite convergence
when the linear independence constraint qualification,
strict complementarity and second order sufficient conditions
hold at every  minimizer. This is shown in \cite{nieopcd}.
When the feasible set $K$ is unbounded, the above conclusion may not hold.
However, we can prove similar conclusions for
the homogenized hierarchy of relaxations \reff{3.3}-\reff{d3.3}.

Recall that $f_k$ is the optimal value of the relaxation \reff{3.3}
for the relaxation order $k$. The hierarchy of \reff{3.3}-\reff{d3.3}
is said to have finite convergence or be tight if
$f_k = f_{\min}$ for all $k$ big enough.
To guarantee the finite convergence,
we assume that $K$ is closed at $\infty$,
$\ideal{ \tilde{c}_{\mathcal{E}} }$ is real radical,
the LICQC, SCC and SOSC hold at every minimizer,
including the one at infinity.
However, the set $K$ is not assumed to be bounded
and we do not assume the optimal value of \reff{1.1} is achievable.

We consider the homogenized optimization problem
(note $\tilde{x} = (x_0, x)$)
\be   \label{3.5}
\left\{ \baray{rl}
\min  &  F(\tilde{x}) \coloneqq \tilde{f}(\tilde{x}) - f_{\min} \cdot x_0^d  \\
\st &   \tilde{c}_{i}(\tilde{x})=0~(i \in \mathcal{E}), \\
&   x_0^2 + \| x \|^2-1=0, \\
&  \tilde{c}_{j}(\tilde{x}) \geq 0~(j \in \mathcal{I}),\\
&   x_{0} \geq 0 .
\earay \right.
\ee
Since there is the sphere constraint,
the above optimization must have minimizers.

\begin{lem}
	\label{hom:opt:zero}
	If $K$ is closed at $\infty$ and $f_{\min}>-\infty$,	
	the minimum value of \reff{3.5} is 0.
\end{lem}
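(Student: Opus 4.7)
The plan is to sandwich $\min F = \min\bigl(\tilde{f} - f_{\min}\,x_0^d\bigr)$ over $\widetilde{K}$ between $0$ and $0$. For the lower bound $F\ge 0$ on $\widetilde K$, I would apply the second half of Lemma~\ref{lemma3.1} to the polynomial $p \coloneqq f - f_{\min}$, which is nonnegative on $K$ by the very definition of $f_{\min}$. Its homogenization to degree $d = \deg(f)$ is exactly $\tilde p = \tilde f - f_{\min}\,x_0^d = F$, so closedness of $K$ at infinity forces $F \ge 0$ on $\widetilde K$. (The degenerate case $d = 0$ is trivial since then $f \equiv f_{\min}$ and $F \equiv 0$.)

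For the matching upper bound I would exhibit a point in $\widetilde K$ at which $F$ vanishes, by a compactness argument applied to a minimizing sequence. Choose $\{x^{(k)}\} \subseteq K$ with $f(x^{(k)}) \to f_{\min}$ and lift to the sphere via
\[
\tilde x^{(k)} \coloneqq \frac{(1, x^{(k)})}{\sqrt{1 + \|x^{(k)}\|^2}}.
\]
Homogeneity of each $\tilde c_i$ and $\tilde c_j$ (together with the positivity of the first coordinate) puts $\tilde x^{(k)} \in \widetilde K$, and homogeneity of $\tilde f$ combined with $\tilde f(1, x) = f(x)$ yields
\[
F(\tilde x^{(k)}) \;=\; \frac{f(x^{(k)}) - f_{\min}}{\bigl(1 + \|x^{(k)}\|^2\bigr)^{d/2}} \;\longrightarrow\; 0.
\]
Since $\widetilde K$ lies on the unit sphere and is therefore compact, some subsequence of $\{\tilde x^{(k)}\}$ converges to a point $\tilde x^* \in \widetilde K$, and continuity of $F$ gives $F(\tilde x^*) = 0$. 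Combined with the lower bound, this forces $\min F = 0$.

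There is no serious obstacle here; the argument is essentially compactness plus homogenization bookkeeping. The only mildly subtle point is that the same sequence construction handles both cases uniformly: if $f_{\min}$ is attained on $K$, the limit $\tilde x^*$ has $x_0^* > 0$ and corresponds to an ordinary minimizer of \reff{1.1}, while if $f_{\min}$ is not attained then $\|x^{(k)}\| \to \infty$ forces $x_0^* = 0$, recovering a minimizer at infinity in the sense of Theorem~\ref{thm:prop:inf}(ii). Closedness of $K$ at infinity is used only in the lower-bound step, and is essential there: without it $F$ could be negative on the $x_0 = 0$ face of $\widetilde K$.
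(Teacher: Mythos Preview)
Your proof is correct and follows the same overall structure as the paper's: Lemma~\ref{lemma3.1} for the lower bound $F\ge 0$ on $\widetilde K$, then exhibiting a point of $\widetilde K$ where $F$ vanishes. The only difference is in the upper-bound step. The paper argues by cases: if $f_{\min}$ is attained at some $x^*\in K$, lift $x^*$ to $\widetilde K$ directly; if not, invoke Theorem~\ref{thm:prop:inf}(ii) to produce a minimizer at infinity. You instead run a single compactness argument on a lifted minimizing sequence, which handles both cases at once and is essentially an inlined version of the proof of Theorem~\ref{thm:prop:inf}(ii). Your route is slightly more self-contained (it does not forward-reference that theorem), while the paper's version makes the dichotomy between ordinary minimizers and minimizers at infinity explicit, which is useful for the narrative that follows. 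Neither approach has any advantage in strength or generality.
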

\begin{proof}
	Since $K$ is closed at $ \infty$, we know $F(\tilde{x}) \geq 0$
	on $\widetilde{K}$, by Lemma \ref{lemma3.1}.
	If $f_{\min}$ is achievable at a minimizer $x^*$ of $\reff{1.1}$,
	then $\tilde{x}^* \coloneqq (1+\|x^*\|^2)^{-\half} (1,x^*) \in \widetilde{K}$
	and $F(\tilde{x}^*)=0$.
	If $f_{\min}$ is not achievable, then \reff{1.1}
	has a minimizer at $\infty$, say $u^*$, by Theorem \ref{thm:prop:inf}.
	For the point $\tilde{u}^* \coloneqq (0,u^*)$,
	we have $F(\tilde{u}^*)=0$ and $\tilde{u}^* \in \widetilde{K}$.
	So the minimum value of \reff{3.5} is 0.
\end{proof}

Since the minimum value is $0$, a point $\tilde{u}$ is a minimizer of \reff{3.5}
if and only if $F(\tilde{u})=0$ and $\tilde{u} \in \widetilde{K}$.
Suppose $\tilde{u}=(u_0,u)$ is a minimizer of \reff{3.5}.
If $u_0 >0$, then $u/u_0$ is a minimizer of \reff{1.1}.
If $u_0 =0$, then $u$ is a minimizer at $\infty$ for \reff{1.1}.
For the case $u_0 >0$, we call $\tilde{u}$ a regular minimizer of \reff{3.5}.
For the case $u_0 =0$, we call $\tilde{u}$ a minimizer at infinity of \reff{3.5}.
Recall the optimality conditions LICQC, SCC and SOSC
as in Subsection~\ref{ssc:opcd}.
Note that $\widetilde{K}$ is always compact.
The following is the finite convergence result
based on optimality conditions
for the homogenized optimization problem \reff{3.5}, which follows from \cite{nieopcd}.

\begin{lem}  \label{hom:opt}
	Assume $K$ is closed at $\infty$ and $\ideal{\tilde{c}_{\mathcal{E}}}$
	is real radical. If the LICQC, SCC and SOSC hold
	at every minimizer of \reff{3.5}, then the hierarchy of relaxations
	\reff{3.3}-\reff{d3.3} is tight, i.e., there exists $k_0 \in \N$ such that
	\[
	f_k=f^{\prime}_k=f_{\min} \quad \mbox{for all}\, \, k \ge k_0.
	\]
\end{lem}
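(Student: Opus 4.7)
The plan is to reduce the statement to the classical finite convergence theorem of \cite{nieopcd}, applied to the compact homogenized problem \reff{3.5}. The bridge is the observation that feasibility of $\gamma = f_{\min}$ in \reff{3.3} is exactly the statement that the standard Lasserre SOS relaxation of \reff{3.5} attains its optimum value $0$.

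First I would verify that \reff{3.5} satisfies all hypotheses of the finite convergence theorem in \cite{nieopcd}. The feasible set $\widetilde{K}$ is compact, since the constraint $\|\tilde{x}\|^2 - 1 = 0$ in $\tilde{c}_{\mathcal{E}}$ confines it to the unit sphere, and hence $\ideal{\tilde{c}_{\mathcal{E}}} + \qmod{\tilde{c}_{\mathcal{I}}}$ is archimedean. By hypothesis, $\ideal{\tilde{c}_{\mathcal{E}}}$ is real radical and the LICQC, SCC, SOSC hold at every minimizer of \reff{3.5} (both regular minimizers and those at infinity, as explained in the paragraph preceding the lemma). By Lemma \ref{hom:opt:zero}, the minimum value of \reff{3.5} equals $0$, so the objective $F(\tilde{x}) = \tilde{f}(\tilde{x}) - f_{\min} \cdot x_0^d$ is nonnegative on $\widetilde{K}$.

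Next I would invoke the finite convergence theorem of \cite{nieopcd} on the compact problem \reff{3.5}. It yields an integer $k_0 \in \N$ such that for all $k \geq k_0$,
\[
F(\tilde{x}) \;=\; \tilde{f}(\tilde{x}) - f_{\min} \cdot x_0^d \;\in\; \ideal{\tilde{c}_{\mathcal{E}}}_{2k} + \qmod{\tilde{c}_{\mathcal{I}}}_{2k}.
\]
This membership is exactly the condition that $\gamma = f_{\min}$ is feasible for \reff{3.3} at order $k$, so $f_k \geq f_{\min}$. For the converse, any feasible $\gamma$ in \reff{3.3} forces $\tilde{f} - \gamma x_0^d \geq 0$ on $\widetilde{K}$; closedness of $K$ at $\infty$ together with Lemma \ref{lemma3.1} gives $f - \gamma \geq 0$ on $K$, so $\gamma \leq f_{\min}$. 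Hence $f_k = f_{\min}$ for all $k \geq k_0$. For the moment side, weak duality gives $f_k \leq f'_k$, and substituting the moment sequence of a near-minimizer $\tilde{u}^{(m)} = (1,x^{(m)})/\sqrt{1+\|x^{(m)}\|^2}$ (suitably rescaled so that $\langle x_0^d, y\rangle = 1$) yields $f'_k \leq f(x^{(m)}) \to f_{\min}$, so $f'_k = f_{\min}$ as well.

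The main conceptual obstacle is that \reff{3.3} subtracts $\gamma \cdot x_0^d$ rather than a constant $\gamma$, so it is not literally the Lasserre SOS relaxation of \reff{3.5}. These two formulations agree, however, precisely at the critical value $\gamma = f_{\min}$: the membership $F \in \ideal{\tilde{c}_{\mathcal{E}}}_{2k} + \qmod{\tilde{c}_{\mathcal{I}}}_{2k}$ simultaneously certifies that the standard Lasserre hierarchy for \reff{3.5} attains its optimum and that $\gamma = f_{\min}$ is feasible in \reff{3.3}. Once this bridge is established, the hypotheses of Lemma \ref{hom:opt} map verbatim onto those of \cite{nieopcd} and no further technical machinery is required.
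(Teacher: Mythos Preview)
Your proposal is correct and follows essentially the same route as the paper: both reduce to the finite convergence theorem of \cite{nieopcd} applied to the compact homogenized problem \reff{3.5}, using archimedeanness from the sphere constraint and the real radical hypothesis to obtain $\tilde{f} - f_{\min} x_0^d \in \ideal{\tilde{c}_{\mathcal{E}}} + \qmod{\tilde{c}_{\mathcal{I}}}$. The paper makes the two-step structure explicit (first $\tilde{f} - f_{\min} x_0^d \equiv \sigma \bmod \ideal{V_{\mathbb{R}}(\tilde{c}_{\mathcal{E}})}$ via \cite{nieopcd}, then real radicality to pass to $\ideal{\tilde{c}_{\mathcal{E}}}$), whereas you compress this into a single invocation; your added verification that $f_k \le f_{\min}$ and $f'_k \le f_{\min}$ is correct and simply spells out what the paper leaves implicit.
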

\begin{proof}
	Note that $\ideal{\tilde{c}_{\mathcal{E}}} +\qmod{\tilde{c}_{\mathcal{I}}}$ is archimedean.
	By Theorem~1.1 of \cite{nieopcd},
	there exists $\sigma \in \qmod{\tilde{c}_{\mathcal{I}}}$ such that
	\begin{equation*}
		\tilde{f}-f_{\min}x_0^d \equiv \sigma ~ \bmod ~
		\ideal{V_{\mathbb{R}}(\tilde{c}_{\mathcal{E}})},
	\end{equation*}
	Since the ideal $\ideal{ \tilde{c}_{\mathcal{E}} }$ is real radical,
	there exists $\phi \in \ideal{ \tilde{c}_{\mathcal{E}} }$ such that
	\[
	\tilde{f}-f_{\min}x_0^d = \phi + \sigma.
	\]
	This implies that $f_k=f^{\prime}_k=f_{\min}$ for all $k$ big enough.
\end{proof}

In Lemma \ref{hom:opt}, optimality conditions are stated for
the homogenized optimization problem \reff{3.5}.
Interestingly, the optimality conditions at regular minimizers for \reff{3.5}
are equivalent to those for \reff{1.1}.
The equivalence will be shown in the following subsections
(see Theorem~\ref{lemma3.11}).
Therefore, the finite convergence result can be stated
under the optimality conditions of \reff{1.1}.
For a minimizer  at infinity $x^*$,
we say the LICQC, SCC and SOSC hold at $x^*$
if they hold for \reff{3.5} at $(0,x^*)$.
The following is the main result about the finite convergence.

\begin{theorem}  \label{thm3.12}
	Assume $K$ is closed at $\infty$ and the ideal $\ideal{\tilde{c}_{\mathcal{E}}}$
	is real radical. If the LICQC, SCC and SOSC hold at every  minimizer of \reff{1.1},
	including the one at infinity, then the hierarchy of relaxations
	\reff{3.3}-\reff{d3.3} is tight, i.e.,
	$f_k=f^{\prime}_k=f_{min}$ for all $k$ big enough.
\end{theorem}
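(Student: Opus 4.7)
The plan is to reduce Theorem~\ref{thm3.12} to Lemma~\ref{hom:opt} by verifying that the LICQC, SCC, and SOSC hold at \emph{every} minimizer of the homogenized problem \reff{3.5}, not merely at the minimizers of \reff{1.1}. Since $\widetilde{K}$ is compact, \reff{3.5} attains its minimum, which equals $0$ by Lemma~\ref{hom:opt:zero}. A point $\tilde{u} = (u_0,u)$ is a minimizer of \reff{3.5} if and only if $F(\tilde{u})=0$ and $\tilde{u}\in\widetilde{K}$, and these split into two classes: regular minimizers ($u_0 > 0$), which correspond bijectively under the perspective map $\varphi$ to minimizers of \reff{1.1}; and minimizers at infinity ($u_0 = 0$), which correspond to minimizers at $\infty$ of \reff{1.1} via Theorem~\ref{thm:prop:inf}(i).

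For minimizers of \reff{3.5} at infinity, the optimality conditions hold by hypothesis, because the problem statement \emph{defines} the LICQC/SCC/SOSC for a minimizer at $\infty$ of \reff{1.1} as those conditions for \reff{3.5} at $(0,x^*)$. Hence the substantive content lies at the regular minimizers, and here the key tool is the equivalence result Theorem~\ref{lemma3.11} promised earlier: for any minimizer $x^*\in K$ of \reff{1.1}, the LICQC, SCC, and SOSC hold at $x^*$ for \reff{1.1} if and only if they hold at $\tilde{x}^* \coloneqq (1+\|x^*\|^2)^{-1/2}(1,x^*)$ for \reff{3.5}. Given this equivalence, the assumption of the theorem transfers directly: the hypothesized optimality conditions at every minimizer of \reff{1.1} (including those at infinity) yield the corresponding optimality conditions at every minimizer of \reff{3.5}.

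With all three conditions established at every minimizer of \reff{3.5}, and with $K$ closed at $\infty$ and $\ideal{\tilde{c}_{\mathcal{E}}}$ real radical (both of which are passed through from the hypotheses), Lemma~\ref{hom:opt} applies directly and delivers $f_k = f_k^\prime = f_{\min}$ for all sufficiently large $k$, which is exactly the desired conclusion.

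The main obstacle is Theorem~\ref{lemma3.11}, which must be cited but is not proved in this excerpt. The nontrivial direction requires translating gradients and the bordered Hessian of the Lagrangian of \reff{3.5} on the sphere $x_0^2 + \|x\|^2 = 1$ back to the corresponding objects for \reff{1.1} at $x^* = u/u_0$. Concretely, one must (a) verify that the multiplier for the active constraint $x_0 \ge 0$ vanishes (so SCC carries over in both directions) and that the multiplier for the sphere equation is recoverable from $f_{\min}$ and Euler's relation on the homogenized polynomials, (b) check that linear independence of $\{\nabla c_i(x^*)\}_{i\in J(x^*)}$ is equivalent to linear independence of the homogenized gradients together with $(u_0, u)$ on the sphere, and (c) identify the tangent space $V(\tilde{x}^*)^\perp$ on the sphere with a rescaled image of $V(x^*)^\perp$ so that the Hessian definiteness transfers. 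Once these coordinate translations are done, the finite-convergence theorem follows immediately from Lemma~\ref{hom:opt}.
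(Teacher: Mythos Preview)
Your proposal is correct and follows essentially the same route as the paper: split minimizers of \reff{3.5} into regular ones and those at infinity, invoke Theorem~\ref{lemma3.11} for the former and the definitional convention for the latter, then apply Lemma~\ref{hom:opt}. One small correction to your sketch of what Theorem~\ref{lemma3.11} entails: at a regular minimizer $\tilde{x}^* = (1,x^*)/\sqrt{1+\|x^*\|^2}$ the constraint $x_0\ge 0$ is \emph{inactive} (since $x_0^*>0$), so there is no multiplier for it to worry about and SCC transfers without that step.
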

\begin{proof}
	Since $K$ is closed at $ \infty$, we know $\tilde{f}-f_{\min} x_0^d \geq 0$ on $\widetilde{K}$, by Lemma \ref{lemma3.1}.
	Let $\tilde{x}^*=(x^*_0,x^*) \in \widetilde{K}$ be a  minimizer of \reff{3.5}.
	If $x^*_0 > 0$, then $\frac{x^*}{x^*_0}$ is a  minimizer of $(\ref{1.1})$.
	Since the LICQC, SCC and SOSC hold at $\frac{x^*}{x^*_0}$,
	these optimality conditions also hold at $\tilde{x}^*$ for \reff{3.5},
	by Theorem \ref{lemma3.11}. If $x_0^*=0$,
	then $x^*$ is a  minimizer at infinity.
	By the assumptions, these optimality conditions also hold at $\tilde{x}^*$.
	Hence, the LICQC, SCC and SOSC hold at every  minimizer of $ (\ref{3.5})$.
	By Lemma \ref{hom:opt},
	we have $f_k=f^{\prime}_k=f_{\min}$ for all $k$ big enough.
\end{proof}

In Theorem~\ref{thm3.12}, the minimum value
$f_{\min}$ is not assumed to be achievable for \reff{1.1}.
When there are no equality constraints,
the ideal $\ideal{ \tilde{c}_{\mathcal{E}} } = \ideal{ \|\tilde{x}\|^2-1 }$
is real radical, regardless of inequality constraints.

For an exposition for Theorem~\ref{thm3.12}, we consider
Example~\ref{example3.3}. At the unique minimizer $
x^*=(-\frac{\sqrt{3}}{3},-1+\frac{\sqrt{3}}{9})
$, the first constraint $x_1^3+x_2 +1 \geq 0$ is active
while the second one is not.
Hence, we have that the Lagrange multipliers $\lmd_1 = 1$, $\lmd_2 = 0$.
The Hessian of the Lagrange function at $x^*$ is
$
\bbm 2\sqrt{3}   & 0 \\ 0 & 0 \ebm.
$
One can see that the LICQC, SCC and SOSC all hold  at $x^*$.
One can check that there are no minimizers at infinity and
$\ideal{ \tilde{c}_{\mathcal{E}}} = \ideal{ \| \tilde{x}\|^2 -1 }$
is real radical. By Theorem~\ref{thm3.12},
the hierarchy of \reff{3.3}-\reff{d3.3}
is tight for this optimization problem.
In fact, we have $f_k = f_{\min}$ for all $k\geq 3$.

In the following, we investigate when the
optimality conditions hold for \reff{3.5}.

\subsection{Optimality conditions for regular minimizers}
\label{ssc:preserve}

An important property of homogenization is that
it preserves optimality conditions for regular minimizers.

\begin{theorem} \label{lemma3.11}
	Suppose  $K$ is closed at $\infty$.
	Then, a point $x^* \in K$ is a  minimizer of \reff{1.1}
	if and only if $\tilde{x}^*  \coloneqq  (1,x^*)/\sqrt{1+\|x^*\|^2}$
	is a  minimizer of \reff{3.5}. Moreover, we have:
	\bit
	
	\item [(i)] The LICQC holds for \reff{1.1} at $x^*$
	if and only if it holds for \reff{3.5} at $\tilde{x}^*$.
	
	\item [(ii)] Suppose the LICQC holds for \reff{1.1} at $x^*$.
	Then, the SCC holds for \reff{1.1} at $x^*$
	if and only if it holds for \reff{3.5} at $\tilde{x}^*$.

	\item [(iii)] Suppose the LICQC holds for \reff{1.1} at $x^*$.
	Then, the SOSC holds for \reff{1.1} at $x^*$
	if and only if it holds for \reff{3.5} at $\tilde{x}^*$.
	\eit
\end{theorem}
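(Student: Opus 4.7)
The plan is to push all of \reff{3.5} through the perspective map $\Phi \colon \mathbb{R}^n \to \widetilde{K} \cap \{x_0 > 0\}$ defined by $\Phi(x) := (1,x)/\Omega(x)$ with $\Omega(x) := \sqrt{1+\|x\|^2}$. This $\Phi$ is a smooth diffeomorphism onto the open upper hemisphere and satisfies the pullback identity $\tilde{p} \circ \Phi = p/\Omega^{\deg p}$ for every polynomial $p$. The minimizer correspondence is then immediate: $F(\tilde{x}^*) = (f(x^*) - f_{\min})/\Omega(x^*)^d$ vanishes precisely when $f(x^*) = f_{\min}$, and Lemma~\ref{hom:opt:zero} identifies this with $\tilde{x}^*$ being a minimizer of \reff{3.5}. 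At $\tilde{x}^*$ the active constraints of \reff{3.5} are $\{\tilde{c}_i\}_{i\in J(x^*)}$ together with the sphere $\|\tilde{x}\|^2-1$, while $x_0 \ge 0$ is strictly inactive since $x_0^* = 1/\Omega(x^*) > 0$.

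For (i), Euler's identity applied to an active $c_i$ (so $c_i(x^*) = 0$) gives
\[
\nabla \tilde{c}_i(\tilde{x}^*) \;=\; \Omega(x^*)^{1-\deg c_i}\bigl(-(x^*)^T\nabla c_i(x^*),\; \nabla c_i(x^*)^T\bigr)^T,
\]
and $\nabla(\|\tilde{x}\|^2-1)(\tilde{x}^*) = 2\tilde{x}^*$. Setting a linear combination of these to zero and projecting on the top/bottom blocks forces the sphere coefficient to vanish (since $1+\|x^*\|^2 > 0$); the remaining bottom block is a combination of $\{\nabla c_i(x^*)\}_{i \in J(x^*)}$, so LICQC on the two sides is equivalent. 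For (ii), solving the KKT system at $\tilde{x}^*$ yields unique multipliers $\tilde{\lambda}_i = \lambda_i \Omega(x^*)^{\deg c_i - d}$ for $i \in J(x^*)$ together with zero multipliers for the sphere and $x_0 \ge 0$; since $\Omega(x^*)^{\deg c_i - d} > 0$ and $\tilde{c}_j(\tilde{x}^*)$ is a positive multiple of $c_j(x^*)$, positivity of multipliers and of constraint values is preserved in both directions, which is SCC.

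The substantive step is (iii). Let $G := \tilde{\mathscr{L}} \circ \Phi$. Since $\nabla\tilde{\mathscr{L}}(\tilde{x}^*) = 0$ by the FOOC just computed, for any $u \in \mathbb{R}^n$ and $v := d\Phi(x^*)u$ the chain rule gives $u^T \nabla^2 G(x^*)u = v^T \nabla^2 \tilde{\mathscr{L}}(\tilde{x}^*) v$. Writing $G = N/\Omega^d$ with $N(x) := f(x) - f_{\min} - \sum_{i \in J(x^*)} \tilde{\lambda}_i \Omega(x)^{d-\deg c_i} c_i(x)$, the vanishings $N(x^*) = 0$ and $\nabla N(x^*) = \nabla \mathscr{L}(x^*) = 0$ collapse the Hessian-of-quotient formula to $\nabla^2 G(x^*) = \nabla^2 N(x^*)/\Omega(x^*)^d$. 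Using $c_i(x^*) = 0$, expansion yields
\[
\nabla^2 N(x^*) \;=\; \nabla^2 \mathscr{L}(x^*) \,-\, \Omega(x^*)^{-2}\sum_{i \in J(x^*)} \lambda_i(d-\deg c_i)\bigl[x^*\nabla c_i(x^*)^T + \nabla c_i(x^*)(x^*)^T\bigr],
\]
and the rank-two correction vanishes when sandwiched by any $u \in V(x^*)^\perp$ since $u^T \nabla c_i(x^*) = 0$. Pulling back constraints shows $(d\Phi(x^*)u)^T\nabla \tilde{c}_i(\tilde{x}^*) = u^T\nabla c_i(x^*)/\Omega(x^*)^{\deg c_i}$ and $(d\Phi(x^*)u)^T\tilde{x}^* = 0$, so $d\Phi(x^*)$ carries $V(x^*)^\perp$ into $V(\tilde{x}^*)^\perp$; injectivity together with the dimension match $\dim V(x^*)^\perp = n - |J(x^*)| = \dim V(\tilde{x}^*)^\perp$ upgrades this to a bijection, and the positivity $\Omega(x^*)^d > 0$ then transfers SOSC between the two sides. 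The main obstacle is precisely controlling the rank-two curvature correction above; the KKT tangency $u^T\nabla c_i(x^*) = 0$ is what makes it vanish on the critical cone.
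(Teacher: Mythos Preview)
Your proof is correct and takes a genuinely different route from the paper. The paper works component by component: it writes out $\nabla_{\tilde{x}\tilde{x}}^2 L(\tilde{x}^*)$ in block form, uses Euler's identity repeatedly to express $\nabla^2_{x,x_0}L$ and $\nabla^2_{x_0,x_0}L$ in terms of $\nabla^2_{x,x}L$, and then verifies by hand that $\tilde{y}^\mathrm{T}\nabla^2_{\tilde{x}\tilde{x}}L(\tilde{x}^*)\tilde{y} = (y-y_0x^*)^\mathrm{T}\nabla^2_{x,x}L(\tilde{x}^*)(y-y_0x^*)$ for tangent vectors $\tilde{y}=(y_0,y)$. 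You instead pull everything back through the diffeomorphism $\Phi$: the key observation that $\nabla\tilde{\mathscr{L}}(\tilde{x}^*)=0$ kills the curvature-of-$\Phi$ term in the chain rule, so the Hessians are related by conjugation with $d\Phi(x^*)$; then the quotient $G=N/\Omega^d$ collapses to $\nabla^2 N/\Omega^d$ because $N$ and $\nabla N$ vanish at $x^*$, and the rank-two discrepancy between $\nabla^2 N$ and $\nabla^2\mathscr{L}$ annihilates on $V(x^*)^\perp$. The paper's argument is more self-contained but heavier in bookkeeping; yours is cleaner and makes transparent why the correspondence works, namely that SOSC is a second-order intrinsic condition and $\Phi$ is a diffeomorphism carrying one constrained problem to the other. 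One point you assert but should justify explicitly: that the sphere multiplier vanishes. The paper obtains this by premultiplying the KKT equation by $(\tilde{x}^*)^\mathrm{T}$ and invoking Euler's identity on the homogeneous functions $F$ and $\tilde{c}_i$ (which vanish at $\tilde{x}^*$); without this, $\nabla\tilde{\mathscr{L}}(\tilde{x}^*)=0$ would still hold for the full Lagrangian, but your identification $G=N/\Omega^d$ silently drops the sphere term, which is only harmless once you know its multiplier is zero.
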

\begin{proof}
	Without loss of generality, one can assume $f_{\min}=0$
	up to shifting a constant in $f$. By Lemma~\ref{lemma3.1},
	when $K$ is closed at $\infty$, $f(x) \ge 0$ on $K$
	if and only if $\tilde{f}(\tilde{x}) \ge 0$ on $\widetilde{K}$,
	which is the feasible set of \reff{3.5}.
	Note that $x^*$ is a  minimizer of \reff{1.1} if and only if $f(x^*)=0$.
	Since $\tilde{f}(\tilde{x}^*)  = (1+\|x^*\|^2)^{-d/2} f(x^*)$,
	$\tilde{f}(\tilde{x}^*) = 0$ if and only if $f(x^*) = 0$.
	Hence, $x^*$ is a  minimizer of \reff{1.1}
	if and only if $\tilde{x}^*$ is a  minimizer of \reff{3.5}.
	Since the constraint $x_0 \ge 0$ is not active at $\tilde{x}^*$,
	the label set of active constraints is
	\[
	J(x^*) \coloneqq  \{i \in \mathcal{E} \cup \mathcal{I}:  c_{i}(x^*)=0 \}.
	\]
	By the Euler's identity for homogeneous polynomials,
	we have that for all $\tilde{x}$
	\[
	x^\mathrm{T} \nabla_x \tilde{c}_i(\tilde{x}) + x_0 \nabla_{x_0} \tilde{c}_i(\tilde{x})
	= \deg(c_i) \cdot  \tilde{c}_i(\tilde{x}).
	\]
	The above implies that
	\be \label{Euler:x*:x0=1}
	(x^*)^\mathrm{T} \nabla_{x} \tilde{c}_i(\tilde{x}^*) +
	\nabla_{x_0} \tilde{c}_i(\tilde{x}^*) = 0
	\ee
	for all $i \in J(x^*)$. Also note that
	\be \label{gx(tldc):gx(c):x*}
	\nabla_x \tilde{c}_i(\tilde{x}^*) =
	(\sqrt{1+\|x^*\|^2})^{1-\deg(c_i)} \nabla c_i(x^*) .
	\ee

	\smallskip
	\noindent
	(i)``$\Leftarrow$":
	Suppose the LICQC holds for \reff{3.5} at $\tilde{x}^*$, i.e., the gradients
	\[
	\bbm
	\nabla_{x_0} \tilde{c}_i (\tilde{x}^*) \\
	\nabla_{x}\tilde{c}_i(\tilde{x}^*)
	\ebm (i \in J(x^*)), \quad
	\bbm 1 \\ x^*  \ebm
	\]
	are linearly independent. Then \reff{Euler:x*:x0=1} implies that
	$\nabla_{x}\tilde{c}_i(\tilde{x}^*)\, (i \in J(x^*) )$
	are linearly independent.
	By \reff{gx(tldc):gx(c):x*}, we know the gradients
	$\nabla c_{i}(x^*)$ $(i \in J(x^*))$ are linearly independent,
	i.e., the LICQC holds for \reff{1.1} at $x^*$.
	
	\smallskip \noindent
	``$\Rightarrow$":
	Assume the LICQC holds for \reff{1.1} at $x^*$, i.e., the gradients
	$\nabla c_{i}(x^*)$ $(i \in J(x^*))$ are linearly independent.
	Suppose there are scalars $\mu_i$, $\mu_{0}$ such that
	\be \label{lipend:mui}
	\sum_{i \in J(x^*)}\mu_i
	\bbm
	\nabla_{x_0}\tilde{c}_{i}(\tilde{x}^*) \\
	\nabla_{x}\tilde{c}_{i}(\tilde{x}^*)
	\ebm
	+ \mu_0 \bbm 1 \\ x^*  \ebm  =0.
	\ee
	Note that $\tilde{c}_i(\tilde{x}^*)= 0$ for all $i \in J(x^*)$.
	Premultiplying $(\tilde{x}^*)^\mathrm{T}$ in \reff{lipend:mui} results in
	\[
	0 =\sum_{i \in J(x^*)}\mu_i \deg(c_i) \cdot \tilde{c}_i(\tilde{x}^*)
	+ \mu_0 \sqrt{1+\|x^*\|^2} =\mu_0 \sqrt{1+\|x^*\|^2} .
	\]
	Hence, we get $\mu_0=0$.
	By \reff{gx(tldc):gx(c):x*}, the linear independence of
	$\nabla c_{i}(x^*)$ $(i \in J(x^*))$
	implies that $\mu_i=0$ for all $i \in J(x^*)$.
	So, the LICQC holds for \reff{3.5} at $\tilde{x}^*$.
	
	\medskip
	\noindent (ii)
	By the item (i), the LICQC holds at $\tilde{x}^*$ for \reff{3.5}.
	Since $\tilde{x}^*$ is a minimizer of \reff{3.5},
	there exist Lagrange multipliers $\lambda_i$ $(i \in J(x^*))$
	and $\lambda_{0}$ such that
	\be \label{FOOC:(4.1)}
	\bbm
	\nabla_{x_0}  F(\tilde{x}^*) \\
	\nabla_{x}  F(\tilde{x}^*)
	\ebm   = \sum_{i \in J(x^*)}\lambda_i
	\bbm
	\nabla_{x_0}  \tilde{c}_i (\tilde{x}^*) \\
	\nabla_{x} \tilde{c}_i(\tilde{x}^*)
	\ebm + \lambda_0  \bbm 1 \\ x^*  \ebm .
	\ee
	By the Euler's identity for homogeneous polynomials, we have
	\[
	(\tilde{x}^*)^\mathrm{T} \nabla_{ \tilde{x} } F(\tilde{x}^*) =0, \quad
	(\tilde{x}^*)^\mathrm{T}  \nabla_{ \tilde{x} } \tilde{c}_i(\tilde{x}^*)
	=0~(i \in J(x^*)).
	\]
	The above and \reff{FOOC:(4.1)} imply that $\lmd_0=0$. Let
	$\hat{\lambda}_{i}=\lambda_i (\sqrt{1+\|x^*\|^2})^{d-\deg(c_i)}$, then
	\be \label{g(fci):hatlmd:x*}
	\nabla f(x^*) = \sum_{ i \in J(x^*) }
	\hat{\lambda}_{i}\nabla c_i(x^*).
	\ee
	When the LICQC holds, the Lagrange multipliers are unique
	for both \reff{1.1} and \reff{3.5}.
	
	\smallskip
	``$\Rightarrow$": Suppose the SCC holds for \reff{1.1} at $x^*$.
	If otherwise the SCC fails to hold for \reff{3.5} at $\tilde{x}^*$,
	say, $\lambda_{j_0}  =0$ for some $j_0 \in J(x^*) \cap \mathcal{I}$,
	then as in the above we can get
	\[
	\nabla f(x^*) = \sum_{i \in J(x^*)\backslash \{j_0\}}
	\hat{\lambda}_i \nabla c_i(x^*) .
	\]
	Since the Lagrange multipliers are unique, the above implies that
	the SCC fails for \reff{1.1} at $x^*$, which is a contradiction.
	Therefore, the SCC holds for \reff{3.5} at $\tilde{x}^*$.
	
	\smallskip
	``$\Leftarrow$": Suppose the SCC holds for \reff{3.5} at $\tilde{x}^*$,
	then $\lambda_i>0$ for all $i \in J(x^*)\cap \mathcal{I}$ in \reff{FOOC:(4.1)}.
	So, $\hat{\lambda}_i>0$ for each
	$i \in J(x^*)\cap \mathcal{I}$ in \reff{g(fci):hatlmd:x*}.
	Since the Lagrange multipliers are unique,
	this means that the SCC holds for \reff{1.1} at $x^*$.

	\medskip
	\noindent (iii)	
	Note that the LICQC holds for both \reff{1.1} and \reff{3.5}.

	\smallskip \noindent
	``$\Rightarrow$":
	Suppose the SOSC holds \reff{1.1} at $x^*$.
	Consider $\tilde{y} \coloneqq (y_0,y)$ in the tangent space
	of active constraints of \reff{3.5} at $\tilde{x}^*$, i.e.,
	\be \label{tangent:y:hmg}
	\boxed{
		\baray{c}
		y_0 \nabla_{x_0} \tilde{c}_i (\tilde{x}^*) +
		y^\mathrm{T}\nabla_{x}\tilde{c}_i(\tilde{x}^*) = 0 ~~ (i \in J(x^*)), \\
		y_0 + y^\mathrm{T}x^* =  0
		\earay }.
	\ee
	The equation \reff{Euler:x*:x0=1} implies that
	$\nabla_{x_0} \tilde{c}_i(\tilde{x}^*) =
	-(x^*)^\mathrm{T}\nabla_x \tilde{c}_i(\tilde{x}^*)$
	for every $i \in J(x^*)$.
	So \reff{tangent:y:hmg} is equivalent to
	\be  \label{4C}
	\boxed{
		\baray{c}
		(y-y_0x^*)^\mathrm{T}\nabla_{x}\tilde{c}_i(\tilde{x}^*)
		\, = 0 \, (i \in J(x^*) ), \\
		y_0 + y^\mathrm{T}x^* \, = \, 0
		\earay }.
	\ee
	Let $\lambda_{i}, \hat{\lambda}_i$ be Lagrange multipliers as in
	\reff{FOOC:(4.1)}-\reff{g(fci):hatlmd:x*} for the proof of item (ii).
	Note that $\lambda_{0}=0$ and we shift $f$ as $f_{\min}=0$, so we have
	
	\be  \label{4D}
	(y-y_0x^*)^\mathrm{T}\nabla_{x} \tilde{f}(\tilde{x}^*) =
	\sum_{i \in J(x^*)}\lambda_i(y-y_0x^*)^\mathrm{T}
	\nabla_{x}\tilde{c}_i(\tilde{x}^*)= 0 .
	\ee
	For the Lagrange function
	\be \label{Lagfun:lmd}
	L(\tilde{x})  \coloneqq  \tilde{f}(\tilde{x}) -
	\sum_{i \in J(x^*)} \lambda_i \tilde{c}_i(\tilde{x}),
	\ee
	its Hessian at $\tilde{x}^*$ is
	\begin{equation*}
		\nabla_{\tilde{x}\tilde{x}}^2L(\tilde{x}^*)=\left[
		\begin{array}{cc}
			\nabla^2_{x_0,x_0}L(\tilde{x}^*)   & \nabla^2_{x,x_0}L(\tilde{x}^*)  \\
			\nabla^2_{x,x_0}L(\tilde{x}^*)^{\mathrm{T}} & \nabla_{x,x}^2L(\tilde{x}^*) \\
		\end{array}\right].
	\end{equation*}
	We express $\nabla_{\tilde{x}\tilde{x}}^2L(\tilde{x}^*)$
	in terms of $\frac{\partial^2 L(\tilde{x}^*)}{\partial x_i x_j}$
	($1\leq i \leq n$, $1\leq j\leq n$). Let
	\[
	\eta_0  \coloneqq \frac{1}{\sqrt{1+\|x^*\|^2}}, \quad
	\eta  \coloneqq  \frac{x^*}{\sqrt{1+\|x^*\|^2}}.
	\]
	Similar to \reff{Euler:x*:x0=1}, it holds that
	\be \nonumber
	\boxed{
		\baray{c}
		\eta_0\nabla_{x_0} \tilde{f}(\tilde{x}^*) +
		\eta^\mathrm{T}\nabla_x \tilde{f}(\tilde{x}^*) = 0, \\
		\eta_0\nabla_{x_0} \tilde{c}_i(\tilde{x}^*) +
		\eta^\mathrm{T}\nabla_x \tilde{c}_i(\tilde{x}^*) = 0
		\, (i \in J(x^*) )
		\earay
	}.
	\ee
	Again, by Euler's identity, we similarly have
	\be \label{Euler:eta0:eta}
	(d-1)\nabla_{x_{j}}\tilde{f}(\tilde{x}^*)=
	\bbm
	\nabla^2_{x_{j},x_0}\tilde{f}(\tilde{x}^*) &
	\nabla^2_{x_{j},x}\tilde{f}(\tilde{x}^*)
	\ebm
	\bbm \eta_0 \\ \eta \ebm ,
	\ee
	\[
	(\deg(c_i)-1)\nabla_{x_{j}} \tilde{c}_i(\tilde{x}^*)=
	\bbm
	\nabla^2_{x_{j},x_0} \tilde{c}_i(\tilde{x}^*) &
	\nabla^2_{x_{j},x}\tilde{c}_i(\tilde{x}^*)
	\ebm
	\bbm \eta_0 \\ \eta \ebm,
	\]	
	for all $i \in J(x^*)$ and $j =0,\dots,n$.
	For convenience, denote
	\[
	v^*  \coloneqq  d\nabla_{x}\tilde{f}(\tilde{x}^*) -
	\sum_{i \in J(x^*)}\deg(c_i)\lambda_{i}\nabla_{x}\tilde{c}_i(\tilde{x}^*),
	\]
	\[
	\gamma \coloneqq  d\nabla_{x_0}\tilde{f}(\tilde{x}^*) -
	\sum_{i \in J(x^*)}\deg(c_i)\lambda_{i}\nabla_{x_0}\tilde{c}_i(\tilde{x}^*).
	\]
	With \reff{FOOC:(4.1)} and \reff{Euler:eta0:eta}, we have
	\be
	\begin{split}
		\eta_0\nabla^2_{x,x_0} L(\tilde{x}^*)&=(d-1)\nabla_{x}\tilde{f}(\tilde{x}^*)-\sum_{i \in J(x^*)}(\deg(c_i)-1)\lambda_{i}\nabla_{x}\tilde{c}_i(\tilde{x}^*)-\nabla^2_{x,x}L(\tilde{x}^*)\eta\\
		&=v^*- \nabla^2_{x,x}L(\tilde{x}^*)\eta,
		\label{4A}
	\end{split}
	\ee
	
	\be
	\eta_0\gamma=-d\nabla_{x}\tilde{f}(\tilde{x}^*)^\mathrm{T}\eta +
	\sum_{i \in J(x^*)}\deg(c_i)\lambda_{i}\nabla_{x}\tilde{c}_i(\tilde{x}^*)^\mathrm{T}\eta=-\eta^{\mathrm{T}}v^*.
	\ee
	Hence, the following holds
	\be
	(\eta_0)^2\nabla^2_{x_0,x_0}L(\tilde{x}^*) =
	\eta_0\gamma-\eta_0\nabla^2_{x_0,x} L(\tilde{x}^*)\eta =
	-2\eta^{\mathrm{T}}v^*+\eta^{\mathrm{T}}\nabla^2_{x,x}L(\tilde{x}^*)\eta .
	\label{4B}
	\ee
	Consider the new Lagrange function
	\be \label{Lagfun:hat:lmd}
	\hat{L}(x)  \coloneqq  f(x) -
	\sum_{i \in J(x^*)} \hat{\lambda}_i c_i(x) .
	\ee
	Since the SOSC holds for \reff{1.1} at $x^*$, we have
	\begin{equation*}
		y^{\mathrm{T}}\nabla^2\hat{L}(x^*)y >  0, \quad
		\mbox{for each} \,\, y \ne 0, \,
		y^{\mathrm{T}} \nabla c_i(x^*)=0\, ~(i \in J(x^*)).
	\end{equation*}
	The above is equivalent to
	\begin{equation*}
		y^{\mathrm{T}}\nabla_{xx}^2L(\tilde{x}^*)y >  0, \quad
		\mbox{for each} \,\, y \ne 0, \,
		y^{\mathrm{T}} \nabla_x \tilde{c}_i(\tilde{x}^*)=0\, ~(i \in J(x^*)).
	\end{equation*}
	Using \reff{4A}-\reff{4B}, one can verify that for
	$\tilde{y} = (y_0, y)$ satisfying \reff{4C},
	\begin{equation}
		\label{sosc}
		\begin{split}
			\tilde{y}^\mathrm{T}\nabla_{\tilde{x}\tilde{x}}^2L(\tilde{x}^*)\tilde{y}&=  y^\mathrm{T}\nabla_{xx}^2L(\tilde{x}^*)y+2y_0y^\mathrm{T}\nabla^2_{x,x_0} L(\tilde{x}^*)+y_0^2\nabla^2_{x_0,x_0}L(\tilde{x}^*)\\
			&=(y-y_0x^*)^\mathrm{T}\nabla_{x,x}^2L(\tilde{x}^*)(y-y_0x^*)+
			\frac{2y_0(y-y_0x^*)^\mathrm{T}v^*}{\eta_0}\\
			&=(y-y_0x^*)^\mathrm{T}\nabla_{x,x}^2L(\tilde{x}^*)(y-y_0x^*)\geq 0 .
		\end{split}
	\end{equation}
	In the third equality above, $(y-y_0x^*)^\mathrm{T}v^*=0$
	follows from  $(\ref{4C})$-$(\ref{4D})$.
	Moreover,  if
	$
	\tilde{y}^\mathrm{T}\nabla_{\tilde{x}\tilde{x}}^2L(\tilde{x}^*)\tilde{y}=0,
	$
	then $ y-y_0x^*=0$, by the SOSC for \reff{1.1} at $x^*$.
	Since $y_0 + y^\mathrm{T}x^* = 0$ as in \reff{4C}, we get
	\[
	0 = (y-y_0x^*)^\mathrm{T} x^* = -y_0 (1+\|x^*\|^2).
	\]
	So, $y_0 = 0$ and hence $y = 0$, i.e., $\tilde{y}=0$.
	This shows that the SOSC holds for \reff{3.5} at $\tilde{x}^*$.

	\smallskip \noindent
	``$\Leftarrow$":
	When the SOSC holds for \reff{3.5} at $\tilde{x}^*$,
	we show that it also holds for \reff{1.1} at $x^*$.
	Suppose otherwise the SOSC fails to hold for \reff{1.1} at $x^*$,
	then there exists  $u \neq 0 $ such that
	\[
	u^{\mathrm{T}}\nabla^2\hat{L}(x^*) u=0,~
	u^{\mathrm{T}} \nabla c_i(x^*)=0 ~(i \in J(x^*)).
	\]
	Let $\tilde{\ell} \coloneqq (\ell_0,\ell)$, where
	$\ell_0 = -u^\mathrm{T}x^* / (1+\|x^*\|^2)$ and $\ell = u+\ell_0x^*.$
	For the case $\ell_0= 0$, $\ell=u$ is nonzero.
	So, $\tilde{\ell} $ is a nonzero vector. Note that
	\begin{equation*}
		\baray{c}
		(\ell-\ell_0x^*)^\mathrm{T}\nabla_{x}\tilde{c}_i(\tilde{x}^*)=
		u^\mathrm{T}\nabla_{x}\tilde{c}_i(\tilde{x}^*)=0 ~(i \in J(x^*)), \\
		\ell^\mathrm{T}x^*+\ell_0=u^\mathrm{T}x^*+\ell_0(1+\|x^*\|^2)=0.
		\earay
	\end{equation*}
	By equation \reff{4C}, we know  $\tilde{\ell}$ lies in the tangent space
	of active constraints of \reff{3.5} at $\tilde{x}^*$.
	Moreover, similar as in \reff{sosc},  the following holds
	\begin{equation*}
		\begin{split}		
			\tilde{\ell}^\mathrm{T}\nabla_{\tilde{x}\tilde{x}}^2L(\tilde{x}^*)\tilde{\ell}&
			=(\ell-\ell_0x^*)^\mathrm{T}\nabla_{xx}^2L(\tilde{x}^*)(\ell-\ell_0x^*)\\
			&= u^{\mathrm{T}}\nabla_{\tilde{x}\tilde{x}}^2L(\tilde{x}^*)u=
			u^{\mathrm{T}}\nabla^2\hat{L}(x^*)u = 0.
		\end{split}	
	\end{equation*}
	This is a contradiction to that
	the SOSC holds at $\tilde{x}^*$ for \reff{3.5}.
	So the SOSC must hold for \reff{1.1} at $x^*$.
\end{proof}

The LICQC, SCC and SOSC all hold at every local minimizer of \reff{1.1}
for generic polynomial optimization problems.
This is a major conclusion in \cite{nieopcd}.
By Theorem~\ref{lemma3.11},
we know these optimality conditions all hold at every
regular minimizer of \reff{3.5}
for generic polynomials.

\subsection{Optimality conditions for minimizers at infinity}
\label{ssc:infinity}

We consider minimizers at infinity.
For a polynomial $p$, recall that $p^\hm$
denotes the homogeneous part of the highest degree for $p$.
Suppose $K$ is closed at $\infty$ and $\tilde{x}^*=(0,x^*)$
is a minimizer at $\infty$ for \reff{3.5}.
By Theorem~\ref{thm:prop:inf} and Lemma~\ref{hom:opt:zero},
we know $f^\hm(x^*)=0$ and $x^*$ is a minimizer for
\be   \label{p3.7}
\left\{ \begin{array}{rl}
	\min   & f^\hm(x) \\
	\st & c_{i}^\hm (x) = 0 ~(i \in \mathcal{E}),\\
	& c_{j}^\hm (x) \geq  0 ~(j \in \mathcal{I}), \\
	&  \| x \|^2 - 1 = 0 .
\end{array} \right.
\ee
Denote the active label set
\be    	\label{jx}
J_1(x^*)   \coloneqq
\left\{i \in \mathcal{E} \cup \mathcal{I} \mid
c^\hm_{i}(x^*)=0\right\}.
\ee
By the Fritz-John  condition (see \cite{Bert97}),
there exist scalars $\mu_0$, $\mu_i$, $\bar{\mu}$ such that
\[
\mu_0 \nabla f^\hm(x^*)=\sum_{i \in J_1(x^*)}
\mu_i \nabla c^\hm_i(x^*) + \bar{\mu} x^*,
\]
while $\mu_0$, $\mu_i$, $\bar{\mu}$ are not all zero.
Note that $\| x^* \| = 1$.
Since  $f^\hm(x^*)=0$ and $c_i^\hm (x^*)=0$
for each $i \in J_1(x^*)$, premultiplying $(x^*)^\mathrm{T}$
in the above results in $\bar{\mu}=0$,
by the Euler's identity.
So the above is the same as
\be  \label{eq3.11}
\mu_0 \nabla f^\hm(x^*)=\sum_{i \in J(x^*)}
\mu_i \nabla c^\hm_i(x^*) .
\ee

First, we show that
there are no minimizers at infinity  for \reff{1.1}
if the polynomials are generic.
For this, we review some basic theory for resultants and discriminants.
We refer to \cite{GKZ94,njwdis,Stu02}.
Let $p_1, \dots, p_n$ be forms in $x \coloneqq (x_1,\dots,x_n)$.
The resultant $\operatorname{Res}(p_1, \ldots, p_n)$
is a  polynomial in the coefficients of $p_1, \dots, p_n$
such that
\begin{equation*}
	\operatorname{Res}(p_1, \ldots, p_n) =0 \Leftrightarrow
	\exists~ 0 \neq u \in \mathbb{C}^{n},~ p_{1}(u)=\cdots=p_{n}(u)=0.
\end{equation*}
For $m \leq n$, the discriminant $\Delta(p_1, \ldots, p_m)$
is a polynomial in the coefficients of $p_1, \dots, p_m$
such that $\Delta(p_1, \ldots, p_m)=0$ if and only if
there exists $0 \neq u \in \mathbb{C}^{n}$ satisfying
\begin{equation*}
	p_{1}(u)=\cdots=p_{m}(u)=0, ~
	\operatorname{rank}\left[\nabla p_{1}(u) ~ \cdots ~ \nabla p_{m}(u)\right]<m.
\end{equation*}
Both $\operatorname{Res}(p_1, \ldots, p_n)$ and
$\Delta(p_1, \ldots, p_m)$
are homogeneous polynomials in the coefficients of $p_i$.
The following is about nonexistence of
minimizers at infinity for generic polynomial optimization problems.

\begin{theorem}  \label{thm4.5}
	Suppose $K$ is closed at $\infty$ and  $\mathcal{E}=\{1,\dots,m_1\}$,  $\mathcal{I}=\{m_1+1,\dots,m_2\}$, $m_1\leq n-1$. If the polynomials
	$f \in \mathbb{R}[x]_{d_0}$ and $c_{i} \in
	\mathbb{R}[x]_{d_{i}}\left(i \in \mathcal{E} \cup \mathcal{I}\right)$
	satisfy the conditions:
	
	\bit
	
	\item [(i)] For all $m_1+1 \leq j_{1}<\cdots<j_{n-m_1} \leq m_2$
	\[
	\operatorname{Res}(c_1^\hm,\dots,c_{m_1}^\hm,
	c^\hm_{j_{1}}, \ldots, c^\hm_{j_{n-m_1}}) \neq 0 ;
	\]
	
	\item [(ii)] For all $m_1+1 \leq j_{1}<\cdots<j_{r} \leq m_2$
	with $0 \leq r \leq n-m_1-1$
	\[
	\Delta(f^\hm, c_1^\hm,\dots,c_{m_1}^\hm, c^\hm_{j_{1}}, \ldots,
	c^\hm_{j_{r}}) \neq 0,
	\]
	\eit 	
	then \reff{1.1} has no minimizers at $\infty$.
\end{theorem}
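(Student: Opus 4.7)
I will argue by contradiction. Suppose $x^{*}$ is a minimizer at $\infty$ for \reff{1.1}. By Theorem~\ref{thm:prop:inf}(i) we have $x^{*}\in K^{\hm}$ and $f^{\hm}(x^{*})=0$; in particular $\|x^{*}\|=1$, $c_{i}^{\hm}(x^{*})=0$ for all $i\in\mathcal{E}$, and $c_{j}^{\hm}(x^{*})\ge 0$ for all $j\in\mathcal{I}$. Since $x^{*}$ is a minimizer of \reff{p3.7}, the Fritz--John necessary condition, together with Euler's identity killing the multiplier of the sphere constraint (exactly as in the derivation of \reff{eq3.11}), produces scalars $\mu_{0}$ and $\{\mu_{i}\}_{i\in J_{1}(x^{*})}$, not all zero, satisfying \reff{eq3.11}. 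Writing $J_{1}(x^{*})\cap\mathcal{I}=\{j_{1},\dots,j_{r}\}$ with $m_{1}+1\le j_{1}<\cdots<j_{r}\le m_{2}$ and noting $\mathcal{E}\subseteq J_{1}(x^{*})$, the active system at $x^{*}$ comprises the $m_{1}+r$ forms $c_{1}^{\hm},\dots,c_{m_{1}}^{\hm},c_{j_{1}}^{\hm},\dots,c_{j_{r}}^{\hm}$.

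The plan is then to split on the value of $r$. \emph{Case 1:} $r\ge n-m_{1}$. Choose any subset $\{j_{k_{1}},\dots,j_{k_{n-m_{1}}}\}$ of size $n-m_{1}$ from $\{j_{1},\dots,j_{r}\}$. The nonzero point $x^{*}\in\mathbb{C}^{n}\setminus\{0\}$ is a common complex zero of the $n$ forms $c_{1}^{\hm},\dots,c_{m_{1}}^{\hm},c_{j_{k_{1}}}^{\hm},\dots,c_{j_{k_{n-m_{1}}}}^{\hm}$, so the corresponding resultant vanishes, contradicting hypothesis (i).

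\emph{Case 2:} $r\le n-m_{1}-1$. Consider the $m_{1}+r+1\le n$ forms $f^{\hm},c_{1}^{\hm},\dots,c_{m_{1}}^{\hm},c_{j_{1}}^{\hm},\dots,c_{j_{r}}^{\hm}$; they all vanish at the nonzero point $x^{*}$. The Fritz--John relation \reff{eq3.11} forces their gradients at $x^{*}$ to be linearly dependent: if $\mu_{0}\ne 0$, then $\nabla f^{\hm}(x^{*})$ is a linear combination of $\{\nabla c_{i}^{\hm}(x^{*})\}_{i\in J_{1}(x^{*})}$; if $\mu_{0}=0$, then some $\mu_{i}\ne 0$, so the active constraint gradients are themselves dependent. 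In either situation the rank of the gradient matrix $[\nabla f^{\hm}(x^{*}),\nabla c_{1}^{\hm}(x^{*}),\dots,\nabla c_{m_{1}}^{\hm}(x^{*}),\nabla c_{j_{1}}^{\hm}(x^{*}),\dots,\nabla c_{j_{r}}^{\hm}(x^{*})]$ is strictly less than $m_{1}+r+1$, whence $\Delta(f^{\hm},c_{1}^{\hm},\dots,c_{m_{1}}^{\hm},c_{j_{1}}^{\hm},\dots,c_{j_{r}}^{\hm})=0$, contradicting hypothesis (ii).

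The argument is essentially a case split on the number $r$ of inequality constraints that happen to be active at a hypothetical minimizer at infinity; once the split is made, the two hypotheses plug in directly via the defining vanishing conditions of the resultant and the discriminant. The only mildly technical point is obtaining the clean Fritz--John relation \reff{eq3.11} on the unit sphere, which is handled by Euler's identity exploiting that every $c_{i}^{\hm}$ in $J_{1}(x^{*})$ together with $f^{\hm}$ vanishes at $x^{*}$. I do not anticipate a substantial obstacle beyond this bookkeeping.
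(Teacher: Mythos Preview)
Your proposal is correct and follows essentially the same approach as the paper: argue by contradiction, invoke the Fritz--John relation \reff{eq3.11} at a putative minimizer at infinity, and then split on the number $r$ of active inequality constraints, using the resultant hypothesis~(i) to rule out $r\ge n-m_{1}$ and the discriminant hypothesis~(ii) to rule out $r\le n-m_{1}-1$. Your write-up is in fact slightly more explicit than the paper's in spelling out the rank-deficiency argument for the discriminant case.
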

\begin{proof}
	Since $K$ is closed at $\infty$,
	the optimal value of \reff{p3.7} is $0$
	if there is a minimizer  at infinity $x^*$.
	Let $j_1,\dots,j_r$ be the labels of active inequality constraints
	for $(\ref{p3.7})$ at $x^*$.
	The item $(a)$ implies that $r < n- m_1$.
	This is because if otherwise $r \geq n-m_1$, then
	\[
	\operatorname{Res}( c_1^\hm,\dots,c_{m_1}^\hm, c^\hm_{j_{1}}, \ldots,
	c^\hm_{j_{n-m_1}} )  =  0.
	\]
	The Fritz-John condition \reff{eq3.11} implies that
	\[
	\Delta(f^\hm, c_1^{(1)},\dots,c_{m_1}^\hm, c^\hm_{j_{1}}, \ldots,
	c^\hm_{j_{r}}) = 0 .
	\]
	The conditions in (i)-(ii) deny existence of minimizers at infinity.
\end{proof}

For special polynomial optimization problems,
there may exist minimizers at infinity.
For instance, this is the case if
the optimal value is not achievable for \reff{1.1}
(see Theorem~\ref{thm:prop:inf}).
In the following,  we study optimality conditions
for minimizers at infinity.
Suppose $\tilde{x}^* \coloneqq (0, x^*)$
is a minimizer at infinity for \reff{3.5}. Recall that $d = \deg(f)$.
The KKT equation for \reff{3.5} at $\tilde{x}^*$ is in the form
\be \label{KKT:inf:(0,x*)}
\baray{rcr}
\left[\begin{array}{c}
	f^\se(x^*)-d \cdot f_{\min}\cdot 0^{d-1}  \\ 	\nabla f^\hm(x^*)
\end{array} \right]
& = & \sum\limits_{i \in J_1(x^*)} \lambda_i
\left[\begin{array}{c}
	c_{i}^\se\left(x^*\right) \\	\nabla c_{i}^\hm(x^*)
\end{array}\right] + \qquad  \\
& &  \lambda_0 \bbm  1 \\ 0 \ebm +
\bar{\lambda} \bbm 	0 \\ x^* \ebm.
\earay
\ee

\begin{theorem}  \label{p4.7}
	Suppose $K$ is closed at $\infty$ and $\tilde{x}^* \coloneqq (0, x^*)$
	is a minimizer at infinity for \reff{3.5}. Let
	$J_1(x^*)$ be as in $(\ref{jx})$. Then, we have:
	
	\bit
	
	\item [(i)] The LICQC holds for \reff{3.5} at $\tilde{x}^*$
	if and only if the gradients
	\[  \nabla c_{i}^\hm(x^*)~({i \in J_1(x^*)}) \]
	are linearly independent.

	\item [(ii)] Suppose the LICQC holds  for \reff{3.5} at $\tilde{x}^*$.
	Then, in \reff{KKT:inf:(0,x*)}, we have
	\be \label{opcdinf:lmdbar0}
	\lambda_{0}=f^\se(x^*)-d \cdot f_{\min}\cdot 0^{d-1} -
	\sum_{i \in J_1(x^*)} \lambda_ic_{i}^\se(x^*),
	\quad  \bar{\lambda} = 0.
	\ee
	Moreover, the SCC holds  for \reff{3.5} at $\tilde{x}^*$  if and only if
	\be \label{SCC:inf:x*}
	\lambda_0>0, ~ \lambda_i>0~(i \in  J_1(x^*)\cap \mathcal{I}).
	\ee
	
	\item [(iii)] Suppose the LICQC holds for \reff{3.5} at $\tilde{x}^*$. Let $\lmd_i$ be Lagrange multipliers as in \reff{KKT:inf:(0,x*)}.
	Then, the SOSC holds for \reff{3.5} at $\tilde{x}^*$
	if and only if for every nonzero $y $ satisfying
	\be \label{hom:inf:s}
	\boxed{
		\baray{c}
		y^{\mathrm{T}}\nabla c_{i}^\hm(x^*)=0 ~(i \in J_1(x^*)), \\
		y^{\mathrm{T}}x^*=0
		\earay
	},
	\ee
	we have $ y^\mathrm{T}\nabla^2L_1(x^*)y > 0$, where
	\[
	L_1(x)= f^\hm(x)-\sum_{i \in J_1(x^*)} \lambda_i c_i^\hm (x).
	\]
	
	\eit
\end{theorem}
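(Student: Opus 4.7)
The plan is to unpack the homogenization at $\tilde{x}^*=(0,x^*)$ and exploit Euler's identity to kill every cross term coming from the sphere constraint. Writing $\tilde{c}_i(x_0,x)=c_i^{(1)}(x)+x_0 c_i^{(2)}(x)+O(x_0^2)$ (and similarly for $\tilde{f}$) gives immediately
\[
\nabla_x\tilde{c}_i(\tilde{x}^*)=\nabla c_i^{(1)}(x^*),\quad
\nabla_{x_0}\tilde{c}_i(\tilde{x}^*)=c_i^{(2)}(x^*),\quad
\nabla^2_{xx}\tilde{c}_i(\tilde{x}^*)=\nabla^2 c_i^{(1)}(x^*),
\]
and $\nabla_{x_0}F(\tilde{x}^*)=f^{(2)}(x^*)-d\, f_{\min}\cdot 0^{d-1}$. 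The identities I will use repeatedly are Euler's identity $(x^*)^{\mathrm{T}}\nabla c_i^{(1)}(x^*)=\deg(c_i)\, c_i^{(1)}(x^*)=0$ for $i\in J_1(x^*)$, and $(x^*)^{\mathrm{T}}\nabla f^{(1)}(x^*)=0$ (which holds because $F(\tilde{x}^*)=0$ forces $f^{(1)}(x^*)=0$, by Lemma~\ref{hom:opt:zero} and Theorem~\ref{thm:prop:inf}). The sphere gradient $(0,2x^*)$ and the $x_0$-gradient $(1,0)$ decouple from the $(c_i^{(2)}(x^*),\nabla c_i^{(1)}(x^*))$ thanks to these identities.

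For (i), suppose $\mu_0(1,0)+\bar\mu(0,2x^*)+\sum_{i\in J_1(x^*)}\mu_i\bigl(c_i^{(2)}(x^*),\nabla c_i^{(1)}(x^*)\bigr)=0$. Premultiplying the $x$-block by $(x^*)^{\mathrm{T}}$, Euler's identity annihilates every $\mu_i$-contribution and $\|x^*\|=1$ yields $\bar\mu=0$; the assumed independence of $\{\nabla c_i^{(1)}(x^*)\}$ then forces each $\mu_i=0$, after which the $x_0$-coordinate gives $\mu_0=0$. The converse lifts any nontrivial relation among the $\nabla c_i^{(1)}(x^*)$ to one among the active gradients of \reff{3.5} at $\tilde{x}^*$. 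For (ii), apply the same premultiplication to the $x$-block of \reff{KKT:inf:(0,x*)}: Euler kills $\nabla f^{(1)}(x^*)$ and every $\nabla c_i^{(1)}(x^*)$ term, the $\lambda_0(1,0)$ term has no $x$-part, and what survives is $\bar\lambda\|x^*\|^2=0$, hence $\bar\lambda=0$. The $x_0$-coordinate of \reff{KKT:inf:(0,x*)} then solves directly for $\lambda_0$, giving \reff{opcdinf:lmdbar0}. Uniqueness of multipliers under LICQC then makes SCC at $\tilde{x}^*$ equivalent to positivity of the multipliers of the active inequalities $x_0\geq 0$ and $\tilde{c}_j$ for $j\in J_1(x^*)\cap\mathcal{I}$, i.e., to \reff{SCC:inf:x*}.

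For (iii), the SOSC tangent subspace $V(\tilde{x}^*)^{\perp}$ is cut out by orthogonality to every active gradient: $(1,0)$ forces $y_0=0$, $(0,2x^*)$ gives $y^{\mathrm{T}}x^*=0$, and with $y_0=0$ the condition coming from $(c_i^{(2)}(x^*),\nabla c_i^{(1)}(x^*))$ collapses to $y^{\mathrm{T}}\nabla c_i^{(1)}(x^*)=0$, recovering \reff{hom:inf:s}. The Lagrangian is $L(\tilde{x})=F(\tilde{x})-\sum_{i\in J_1(x^*)}\lambda_i\tilde{c}_i(\tilde{x})-\lambda_0 x_0-\bar\lambda(\|\tilde{x}\|^2-1)$; using $\bar\lambda=0$ together with the expansion identities above, the $xx$-block of $\nabla^2L(\tilde{x}^*)$ is exactly $\nabla^2 f^{(1)}(x^*)-\sum_{i\in J_1(x^*)}\lambda_i\nabla^2 c_i^{(1)}(x^*)=\nabla^2 L_1(x^*)$. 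Since $y_0=0$ on the tangent subspace, $\tilde{y}^{\mathrm{T}}\nabla^2 L(\tilde{x}^*)\tilde{y}=y^{\mathrm{T}}\nabla^2 L_1(x^*)\, y$, yielding the claimed equivalence. The main technical obstacle is the bookkeeping around the sphere constraint: one must consistently combine Euler's identity with $\|x^*\|=1$ to eliminate $\bar\lambda$ in the LICQC test, the KKT decomposition, and the Hessian, and one must verify that the $x_0$-gradient $(1,0)$ automatically enforces $y_0=0$ so that no cross term from $\nabla^2_{xx_0}L$ pollutes the reduction in (iii).
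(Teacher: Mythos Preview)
Your proposal is correct and follows essentially the same approach as the paper: both expand the homogenized constraints at $x_0=0$, use Euler's identity together with $\|x^*\|=1$ to eliminate $\bar\mu$ (resp.\ $\bar\lambda$) in the LICQC and KKT analyses, and then observe that the active constraint $x_0\ge 0$ forces $y_0=0$ on the tangent space so that the Hessian quadratic form reduces to $y^{\mathrm{T}}\nabla^2 L_1(x^*)\,y$. The only cosmetic difference is that you include the sphere term $\bar\lambda(\|\tilde{x}\|^2-1)$ in the Lagrangian and then discard it via $\bar\lambda=0$, whereas the paper omits it from the outset.
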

\begin{proof}
	\par (i)
	The constraint $x_0\geq 0$ is active \reff{3.5} at $\tilde{x}^*$.
	The LICQC at $\tilde{x}^*$ for \reff{3.5}
	requires the linear independence of the gradients
	\begin{equation*}
		\bbm
		c_{i}^\se\left(x^*\right) \\ \nabla c_{i}^\hm(x^*)
		\ebm~({i \in J_1(x^*)}) ,  \,\,
		\bbm  1 \\ 0 \ebm, \,\,
		\bbm  0 \\ x^* \ebm .
	\end{equation*}
	
	\smallskip \noindent
	``$\Rightarrow$":
	Suppose the LICQC holds at $\tilde{x}^*$ for \reff{3.5}.
	If there is a linear combination such that
	$
	\sum_{i \in J_1(x^*)} \mu_i \nabla c_{i}^\hm(x^*)=0,
	$
	then
	\begin{equation*}
		\sum_{i \in J_1(x^*)} \mu_i
		\bbm
		c_{i}^\se\left(x^*\right)  \\    \nabla c_{i}^\hm(x^*)
		\ebm +
		\mu_0 \bbm 1 \\ 0 \ebm  =  0
	\end{equation*}
	for $\mu_0 = -\sum_{i \in J_1(x^*)} \mu_i c_{i}^\se(x^*)$.
	The LICQC holds at $\tilde{x}^*$ for \reff{3.5}
	implies that all $\mu_i=0$. Hence,
	the gradients $\nabla c_{i}^\hm(x^*)~({i \in J_1(x^*)})$
	are linearly independent.

	\smallskip \noindent
	``$\Leftarrow$":
	Suppose the gradients $\nabla c_{i}^\hm(x^*)~({i \in J_1(x^*)})$
	are linearly independent.
	Consider a linear combination such that
	\be  \label{o1}
	\sum_{i \in J_1(x^*)} \mu_i
	\bbm
	c_{i}^\se\left(x^*\right) \\ \nabla c_{i}^\hm(x^*)
	\ebm + \mu_0 \bbm 1 \\ 0 \ebm +
	\bar{\mu} \bbm 0 \\ x^* \ebm = 0.
	\ee
	Since $c_i^\hm(x^*)= 0$ for each $i \in J_1(x^*)$, we have
	$(x^*)^\mathrm{T}\nabla c_{i}^\hm(x^*)=0$.
	Premultiplying $(\tilde{x}^*)^\mathrm{T}$ in the above results in
	$\bar{\mu} \|x^*\|^2=0$, so $\bar{\mu}=0$.
	The linear independence of $\nabla c_{i}^\hm(x^*)~({i \in J(x^*)})$
	implies that $\mu_i=0$ for all $i \in J_1(x^*) $.
	Finally, we get $\mu_0=0$. So, the LICQC holds at $\tilde{x}^*$ for \reff{3.5}.

	\smallskip \noindent
	(ii) Note that $ \tilde{x}^*=(0,x^*)$ is a minimizer of \reff{3.5}.
	Since the LICQC holds at $\tilde{x}^*$,
	there exist scalars $\bar{\lambda}, \lambda_0$,
	$\lambda_i$ $(i \in J_1(x^*) )$ satisfying \reff{KKT:inf:(0,x*)}.
	Premultiplying $(\tilde{x}^*)^\mathrm{T}$ in \reff{KKT:inf:(0,x*)},
	by the Euler's identity, we can express
	$\lmd_0$, $\bar{\lambda}=0$ as in \reff{opcdinf:lmdbar0}.
	The SCC for \reff{3.5} at $\tilde{x}^*$ is equivalent to \reff{SCC:inf:x*}.

	(iii) Since the LICQC holds, there exist Lagrange multipliers
	$ \lambda_0$, $\lambda_i$ as in \reff{KKT:inf:(0,x*)}.
	The Lagrange function for \reff{3.5} is
	\[
	L(\tilde{x})  \coloneqq
	F(\tilde{x})-  \sum_{i \in J_1(x^*)}
	\lambda_i \tilde{c}_i(x)-\lambda_0x_0.
	\]
	Its Hessian expression at $\tilde{x}^*$ is
	\begin{equation*}
		\nabla^2_{\tilde{x},\tilde{x}} L(\tilde{x}^*)=
		\bbm
		L_3(x^*)  & \nabla L_2(x^*)^{\mathrm{T}} \\
		\nabla L_2(x^*)& \nabla^2 L_1(x^*)\\
		\ebm,
	\end{equation*}
	where
	\[
	L_2(x)=f^\se(x)-\sum_{i \in J_1(x^*)} \lambda_i c_{i}^\se(x),
	\]
	\[L_3(x) =2f^{\thi}(x)-d(d-1) \cdot f_{\min}\cdot 0^{d-2} -
	\sum_{i \in J_1(x^*)} 2\lambda_i c_{i}^\thi(x).
	\]
	Consider $\tilde{y} \coloneqq (y_0,y)$ in the tangent space
	of active constraints of \reff{3.5} at $\tilde{x}^*$, i.e.,
	\be \label{hom:inf}
	\boxed{ \baray{c}
		y_0c_{i}^\se\left(x^*\right)+y^{\mathrm{T}}
		\nabla c_{i}^\hm(x^*)=0 ~(i \in J(x^*)),\\
		y^{\mathrm{T}}x^*=0, ~ y_0=0
		\earay }.
	\ee
	Note that \reff{hom:inf} is equivalent to \reff{hom:inf:s}.
	The SOSC for \reff{3.5} at $\tilde{x}^*$ requires:
	for every $\tilde{y} \ne 0$ satisfying \reff{hom:inf},
	it holds that
	\[
	\tilde{y}^\mathrm{T}\nabla^2_{\tilde{x}\tilde{x}}
	L(\tilde{y}^*)\tilde{y} \, = \,
	y^\mathrm{T}\nabla^2L_1(x^*)y \, > \, 0.
	\]
	The above is equivalent to that $y^\mathrm{T}\nabla^2L_1(x^*)y > 0$
	for every $y \ne 0$ satisfying \reff{hom:inf:s}.
	So the conclusion of item (iii) holds.
\end{proof}

\subsection{The even degree case}
\label{ssc:evendeg}

When $f$ and $c_j$ $(j\in \mathcal{I})$ all have even degrees,
the homogenization \reff{3.2} is equivalent to
\reff{lo3.5}; see Proposition \ref{prop3.4}.
This means that the constraint $x_0 \ge 0$
is redundant for \reff{3.5}. Typically, the SCC fails to hold
for minimizers at infinity.
For this case, Theorem~\ref{thm3.12} is not applicable
for showing the finite convergence
for the hierarchy of \reff{3.3}-\reff{d3.3}.
However, the same conclusion like in Theorem~\ref{thm3.12}
holds under optimality conditions without the constraint $x_0 \ge 0$.

The $k$th order SOS relaxation for \reff{lo3.5} is
\be  \label{hmg:even:SOS}
\left\{ \baray{rl}
\max &   \gamma \\
\st &  \tilde{f}(\tilde{x})- \gamma x_0^d \in
\ideal{\tilde{c}_{\mathcal{E}}}_{2k} +\qmod{\tilde{c}_{\mathcal{I}^{*}}}_{2k}.
\earay \right.
\ee
Its dual optimization is the $k$th order moment relaxation
\be  \label{hmg:even:MOM}
\left\{ \baray{rl}
\min  &  \langle \tilde{f}, y \rangle  \\
\st  & L_{p}^{(k)}[y] =0\, ~(p \in \tilde{c}_{\mathcal{E}}),\\
& L_{q}^{(k)}[y] \succeq 0\, ~(q \in \tilde{c}_{\mathcal{I}^{*}}), \\
&  M_k[y] \succeq 0, \\
& \langle x_0^d, y \rangle = 1, \,
y \in \re^{ \N^{n+1}_{2k} }.
\earay \right.
\ee
In the above, the polynomial tuple $\tilde{c}_{\mathcal{I}^{*}}$ is
\be \label{sets:tld:cie}
\tilde{c}_{\mathcal{I}^{*}}  \coloneqq  \big\{ \tilde{c_j}(\tilde{x}) \big\}_{j \in \mc{I} } .
\ee
Let $f_k^e$ and $f^{e,\prime}_k$ denote the optimal values of
\reff{hmg:even:SOS}, \reff{hmg:even:MOM} respectively.
To study their convergence,
we consider the optimization problem

\begin{equation}  \label{hom:dis}
	\left\{ \baray{rl}
	\min  & F(\tilde{x}) =  \tilde{f}(\tilde{x}) - f_{\min} \cdot x_0^d  \\
	\st &   \tilde{c}_{i}(\tilde{x})=0~(i \in \mathcal{E}),\\
	&  \tilde{c}_{j}(\tilde{x}) \geq 0~(j \in \mathcal{I}),\\
	&  x_0^2 + \|x\|^2-1=0.
	\earay \right.
\end{equation}
For the even degree case, the LICQC, SCC and SOSC
are said to hold at a minimizer at infinity $x^*$  if they hold for
\reff{hom:dis} at $(0,x^*)$.
The following is the convergence for the hierarchy of
relaxations \reff{hmg:even:SOS}-\reff{hmg:even:MOM}.

\begin{theorem}  \label{thm:ficvg:even}
	Assume $K$ is closed at $\infty$, $\ideal{\tilde{c}_{\mathcal{E}}}$ is real radical,
	$f$ and $c_j$ $(j\in \mathcal{I})$ all have even degrees.
	If the LICQC, SCC and SOSC hold at
	every minimizer of \reff{1.1}, including the one at infinity,
	then the hierarchy of relaxations
	\reff{hmg:even:SOS}-\reff{hmg:even:MOM} is tight,
	i.e., there exists $k_0$ such that
	\[
	f_k^e  =  f^{e,\prime}_k  =  f_{\min}
	\quad \mbox{for all}\,\,  k \ge k_0.
	\]
	Moreover, the hierarchy of relaxations
	\reff{3.3}-\reff{d3.3} is also tight.
\end{theorem}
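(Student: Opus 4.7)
The plan is to apply the general finite-convergence result of Nie (Theorem~1.1 of \cite{nieopcd}), as invoked in the proof of Lemma~\ref{hom:opt}, directly to the optimization \reff{hom:dis}. Its feasible set is compact and its quadratic module is archimedean (because the sphere constraint is already in $\tilde{c}_{\mathcal{E}}$), and the ideal $\ideal{\tilde{c}_{\mathcal{E}}}$ is real radical by hypothesis. So once I verify that the LICQC, SCC, and SOSC hold at \emph{every} minimizer of \reff{hom:dis}, I obtain $f_k^e = f^{e,\prime}_k = f_{\min}$ for all $k$ large enough; the tightness of \reff{3.3}-\reff{d3.3} will follow by a standard sandwich, since $\qmod{\tilde{c}_{\mathcal{I}^*}} \subseteq \qmod{\tilde{c}_{\mathcal{I}}}$ (so $f_k \ge f_k^e$) and $f_k \le f_{\min}$ always (with the analogous comparison $f^{e,\prime}_k \le f'_k \le f_{\min}$ for the moment side).

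The main work is to enumerate and certify optimality at all minimizers of \reff{hom:dis}. Because $f$ and each $c_j$ ($j \in \mathcal{I}$) have even degree, the objective $F$ and every $\tilde{c}_j$ are invariant under $\tilde{x}\mapsto -\tilde{x}$; equality constraints, even if $c_i$ has odd degree, satisfy $\tilde{c}_i(-\tilde{x}) = \pm \tilde{c}_i(\tilde{x})$, so the equality $\tilde{c}_i=0$ is preserved. Thus \reff{hom:dis} has a $\mathbb{Z}_2$ symmetry, and its minimizers come in pairs $\pm \tilde{x}^*$, where $\tilde{x}^* = (1,x^*)/\sqrt{1+\|x^*\|^2}$ arises from each regular minimizer $x^*$ of \reff{1.1}, and $\tilde{x}^* = (0,x^*)$ arises from each minimizer $x^*$ at infinity (by Theorem~\ref{thm:prop:inf} and Lemma~\ref{hom:opt:zero}). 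One should also observe that the reflection $-x^*$ of a minimizer at infinity is itself a minimizer at infinity, using again the even degrees of $f^{(1)}$ and $c_j^{(1)}$.

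The verification then proceeds case by case. At a regular $\tilde{x}^*$ the constraint $x_0\ge 0$ of \reff{3.5} is inactive (since $x_0>0$), so the LICQC/SCC/SOSC for \reff{3.5} and \reff{hom:dis} at $\tilde{x}^*$ coincide, and Theorem~\ref{lemma3.11} derives them from the assumed conditions for \reff{1.1} at $x^*$. At the at-infinity point $\tilde{x}^* = (0,x^*)$ the conditions for \reff{hom:dis} are granted directly by the hypothesis (per the definition for the even-degree case stated before the theorem). The reflected minimizers $-\tilde{x}^*$ inherit all three conditions via the $\mathbb{Z}_2$ symmetry: $\nabla \tilde{c}_i(-\tilde{x}) = (-1)^{\deg(c_i)-1}\nabla \tilde{c}_i(\tilde{x})$ preserves linear independence, the Lagrange multipliers at $-\tilde{x}^*$ match those at $\tilde{x}^*$ up to a uniform sign that cancels in the nonnegativity conditions of the SCC, and the reduced Hessian is conjugate to the one at $\tilde{x}^*$ by an orthogonal transformation. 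The main obstacle is this symmetry bookkeeping (especially checking that strict complementarity survives the reflection); once it is done, Theorem~1.1 of \cite{nieopcd} applied to \reff{hom:dis} yields the tightness of \reff{hmg:even:SOS}-\reff{hmg:even:MOM}, and the sandwich argument above yields the tightness of \reff{3.3}-\reff{d3.3}.
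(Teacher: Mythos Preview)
Your approach is essentially the same as the paper's: apply Nie's Theorem~1.1 to \reff{hom:dis}, use the real-radical hypothesis to get the representation $\tilde f - f_{\min}x_0^d \in \ideal{\tilde c_{\mathcal E}} + \qmod{\tilde c_{\mathcal I^*}}$, and then deduce tightness of \reff{3.3}--\reff{d3.3} from the inclusion $\qmod{\tilde c_{\mathcal I^*}} \subseteq \qmod{\tilde c_{\mathcal I}}$. The paper's proof is a one-paragraph pointer back to Theorem~\ref{thm3.12} and does not spell out the case $x_0<0$; you correctly identify that \reff{hom:dis} has minimizers with $x_0<0$ (which \reff{3.5} does not) and handle them via the $\mathbb Z_2$ symmetry $\tilde x\mapsto -\tilde x$, so your write-up is in fact more careful on this point than the paper's.

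Two minor clean-ups in your symmetry bookkeeping. First, the Lagrange multipliers at $-\tilde x^*$ are not ``uniform up to a sign'': for inequality constraints (all of even degree) one gets $\lambda_j'=\lambda_j$ exactly, while for an odd-degree equality constraint one gets $\lambda_i'=-\lambda_i$; since SCC only concerns the $\lambda_j$ with $j\in\mathcal I$, it transfers verbatim. Second, the reduced Hessian is not merely conjugate but literally identical: the Lagrangian Hessian (with the above multipliers) and the tangent space of active constraints coincide at $\tilde x^*$ and $-\tilde x^*$, because each $\lambda_i\,\nabla^2\tilde c_i$ and each gradient condition is invariant under the reflection once the sign flips in the odd-degree equality multipliers are taken into account. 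With those tweaks your argument is complete.
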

\begin{proof}
	The proof is almost the same as for Theorem~\ref{thm3.12}.
	Under the given assumptions,
	there exist $\sigma \in \qmod{\tilde{c}_{\mathcal{I}^{*}}}$,
	$\phi \in \ideal{ \tilde{c}_{\mathcal{E}} }$ such that
	\[
	\tilde{f}-f_{\min}x_0^d = \phi + \sigma.
	\]
	So, the hierarchy of \reff{hmg:even:SOS}-\reff{hmg:even:MOM} is tight.
	Since $\qmod{\tilde{c}_{\mathcal{I}^{*}}} \subseteq \qmod{\tilde{c}_{\mathcal{I}}}$,
	the hierarchy of \reff{3.3}-\reff{d3.3} is also tight.
\end{proof}

In the following, we discuss optimality conditions for
minimizers  at infinity of \reff{hom:dis}.
Suppose $\tilde{x}^* = (0, x^*)$
is a minimizer at infinity for \reff{hom:dis}.
Let $J_1(x^*)$ be as in \reff{jx}.
Since $d$ is even,
the KKT equation for \reff{hom:dis} at $\tilde{x}^*$ is
\be \label{KKT:inf:even}
\left[\begin{array}{c}
	f^\se(x^*)\\ 	\nabla f^\hm(x^*)
\end{array} \right]
=\sum\limits_{i \in J_1(x^*)} \lambda_i
\left[\begin{array}{c}
	c_{i}^\se\left(x^*\right) \\	\nabla c_{i}^\hm(x^*)
\end{array}\right] +
\bar{\lambda} \bbm 	0 \\ x^* \ebm.
\ee
The following is a similar version of Theorem~\ref{p4.7}.

\begin{theorem}  	 \label{prop:even}
	Suppose $K$ is closed at $\infty$ and  $\tilde{x}^* = (0, x^*)$
	is a minimizer at $\infty$ for \reff{hom:dis}.
	Assume $f$ and $c_j$ $(j\in \mathcal{I})$ all have even degrees.
	Let $d = \deg(f) > 1$ and let $J_1(x^*)$ be as in \reff{jx}.
	Then, we have:
	
	\bit
	
	\item [(i)] The LICQC holds for \reff{hom:dis} at $\tilde{x}^*$
	if and only if the gradients
	\be  \nn
	\bbm
	c_{i}^\se(x^*) \\
	\nabla c_{i}^\hm (x^*)
	\ebm\,\,  \big( i\in J_1(x^*) \big)
	\ee
	are linearly independent.

	\item [(ii)] Suppose the LICQC holds  for \reff{hom:dis} at $\tilde{x}^*$.
	Then, $\bar{\lmd}=0$ and \reff{KKT:inf:even} is reduced to
	\be \label{FOOC:even:inf}
	\bbm
	f^\se(x^*)	\\
	\nabla f^\hm(x^*)
	\ebm =
	\sum_{i \in J_1(x^*)} \lambda_i
	\bbm
	c_{i}^\se\left(x^*\right)\\
	\nabla c_{i}^\hm(x^*)	
	\ebm.
	\ee
	The SCC holds  for \reff{hom:dis} at $\tilde{x}^*$
	if and only if $\lambda_i>0$ for $i \in  J_1(x^*)\cap \mathcal{I}.$

	\item [(iii)] Suppose the LICQC holds for \reff{hom:dis} at $\tilde{x}^*$.
	Let $\lmd_i$ be Lagrange multipliers as in the item (ii)
	and let $H$ be the matrix
	\begin{eqnarray}  \label{Hes:inf:even}
		H \, =
		\bbm
		2 f^\thi(x^*)-d(d-1)f_{\min} 0^{d-2}  &  \nabla f^\se(x^*)^\mathrm{T} \\
		\nabla f^\se(x^*) &   \nabla^2 f^\hm(x^*)
		\ebm  \qquad  \\
		- \sum\limits_{i \in J_1(x^*)} \lambda_i
		\bbm
		2 c_i^\thi(x^*)  &  \nabla c_i^\se(x^*)^\mathrm{T} \\
		\nabla c_i^\se(x^*) &   \nabla^2 c_i^\hm(x^*)
		\ebm .  \nn
	\end{eqnarray}
	Then, the SOSC holds for \reff{hom:dis} at $\tilde{x}^*$
	if and only if $\tilde{y}^\mathrm{T}H\tilde{y} > 0$
	for every nonzero $\tilde{y} = (y_0, y)$ satisfying
	\be \label{hom:sed}
	\boxed{
		\baray{c}
		y_0c_{i}^\se\left(x^*\right)+y^{\mathrm{T}}
		\nabla c_{i}^\hm(x^*)=0 ~(i \in J_1(x^*)),\\
		y^{\mathrm{T}}x^*=0
		\earay
	}.
	\ee

	\eit
\end{theorem}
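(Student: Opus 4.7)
The plan is to mirror the structure of Theorem~\ref{p4.7}, adapted to the even-degree setting where the constraint $x_0\ge 0$ is absent in the homogenization \reff{hom:dis}; the only equality introduced by homogenization is the sphere $x_0^2+\|x\|^2-1=0$. The central algebraic tool throughout will be Euler's identity $(x^*)^\mathrm{T}\nabla c_i^\hm(x^*) = \deg(c_i)\,c_i^\hm(x^*) = 0$ for each $i\in J_1(x^*)$, together with $f^\hm(x^*)=0$, which holds because $\tilde{x}^*=(0,x^*)$ is a minimizer at infinity (by Theorem~\ref{thm:prop:inf} and Lemma~\ref{hom:opt:zero}).

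For parts~(i) and~(ii), I would write down the active gradients at $\tilde{x}^*$ for \reff{hom:dis}: $\bbm c_i^\se(x^*) \\ \nabla c_i^\hm(x^*) \ebm$ for $i\in J_1(x^*)$, together with the sphere gradient $\bbm 0 \\ x^* \ebm$. For part~(i), premultiplying any vanishing combination of these by $(\tilde{x}^*)^\mathrm{T}=(0,x^*)^\mathrm{T}$ annihilates each $c_i^\hm$ contribution by Euler and isolates $\bar{\mu}\|x^*\|^2=0$, so the sphere multiplier vanishes and the LICQC reduces to linear independence of the stated gradients alone. For part~(ii), the same trick applied to the KKT equation \reff{KKT:inf:even} uses $f^\hm(x^*)=0$ and $c_i^\hm(x^*)=0$ for each active $i$ to produce $\bar{\lambda}\|x^*\|^2=0$, hence $\bar{\lambda}=0$ and the KKT equation collapses to \reff{FOOC:even:inf}. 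The SCC characterization then reads off directly from its definition, since $\bar{\lambda}=0$ leaves only the $\lambda_i$ with $i\in J_1(x^*)\cap\mathcal{I}$ to sign-constrain.

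For part~(iii), with $\bar{\lambda}=0$ the Lagrangian simplifies to $L(\tilde{x})=\tilde{f}(\tilde{x})-f_{\min}x_0^d-\sum_{i\in J_1(x^*)}\lambda_i \tilde{c}_i(\tilde{x})$. Using the homogenization expansion $\tilde{p}(\tilde{x}) = p^\hm(x) + x_0\,p^\se(x) + x_0^2\,p^\thi(x) + \cdots$ and evaluating at $\tilde{x}^*=(0,x^*)$, the three Hessian blocks become $\nabla^2_{xx}\tilde{p}(\tilde{x}^*) = \nabla^2 p^\hm(x^*)$, $\nabla^2_{x_0 x}\tilde{p}(\tilde{x}^*) = \nabla p^\se(x^*)$, and $\nabla^2_{x_0 x_0}\tilde{p}(\tilde{x}^*) = 2\,p^\thi(x^*)$, while $\partial^2(f_{\min}x_0^d)/\partial x_0^2$ at $x_0=0$ contributes $d(d-1)f_{\min}\,0^{d-2}$ under the usual convention. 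Assembling these pieces produces exactly the matrix $H$ of \reff{Hes:inf:even}. The tangent space at $\tilde{x}^*$ for \reff{hom:dis} is cut out by $(\nabla \tilde{c}_i(\tilde{x}^*))^\mathrm{T}\tilde{y}=0$ for $i\in J_1(x^*)$ and, since the sphere gradient at $\tilde{x}^*$ equals $2(0,x^*)^\mathrm{T}$, by $(x^*)^\mathrm{T} y = 0$, which is precisely \reff{hom:sed}; the SOSC then reads $\tilde{y}^\mathrm{T}H\tilde{y}>0$ for all nonzero $\tilde{y}$ in this subspace. The main obstacle is the bookkeeping in part~(iii): one must carefully track the $0^{d-2}$ convention (nonzero only when $d=2$) and confirm that no contribution from the sphere-constraint Hessian survives in $H$, which is precisely what $\bar{\lambda}=0$ from part~(ii) guarantees.
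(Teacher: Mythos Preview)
Your proposal is correct and follows essentially the same approach as the paper: parts~(i) and~(ii) are handled exactly as in Theorem~\ref{p4.7} via Euler's identity and premultiplication by $(\tilde{x}^*)^\mathrm{T}$, and part~(iii) identifies $H$ as the Hessian of the Lagrangian at $\tilde{x}^*$ together with the tangent-space description \reff{hom:sed}. Your write-up is in fact more explicit than the paper's (which simply asserts that $H$ is the Lagrangian Hessian and refers back to Theorem~\ref{p4.7} for (i)--(ii)), and you correctly flag the key structural difference from Theorem~\ref{p4.7}: the absence of the $x_0\ge 0$ constraint removes the gradient $\bbm 1\\0\ebm$ from the active set, which is why LICQC here requires independence of the full vectors $\bbm c_i^\se(x^*)\\ \nabla c_i^\hm(x^*)\ebm$ rather than merely of the $\nabla c_i^\hm(x^*)$.
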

\begin{proof}
	The proofs for the item (i) and  (ii) are exactly the same as for
	(i), (ii) in Theorem \ref{p4.7}.
	So they are omitted for cleanness.
	The item (iii) can be shown as follows.
	The matrix $H$ is the Hessian at $\tilde{x}^*$of
	the Lagrange function
	\[
	L(\tilde{x}) =  \tilde{f}(\tilde{x}) -
	\sum_{i \in J_1(x^*)} \lambda_i \tilde{c}_i(x).
	\]
	A vector $\tilde{y}=(y_0,y)$ lies in the tangent space
	of active constraints of \reff{hom:dis} at $\tilde{x}^*$
	if and only if it satisfies \reff{hom:sed}.
	Thus, the SOSC holds for \reff{hom:dis} at $\tilde{x}^*$
	if and only if  $\tilde{y}^\mathrm{T}H\tilde{y} > 0$
	for every nonzero $\tilde{y}$ satisfying \reff{hom:sed}.
\end{proof}

\section{Extensions of Putinar-Vasilescu's Positivstellensatz}
\label{sc:PV}

In this section,  we generalize the Putinar-Vasilescu's Positivstellensatz
\cite{putinar1999positive,putinar1999solving}
to polynomials that are nonnegative on unbounded semialgebraic sets.
Under some assumptions on optimality conditions,
we prove the conclusions of the Putinar-Vasilescu's Positivstellensatz.

For a polynomial tuple $g \coloneqq (g_1,\dots,g_m)$,
consider the semialgebraic set
\be  \label{5.1}
S \,  \coloneqq  \, \{x\in\mathbb{R}^n\mid g_j(x) \geq 0,~ j=1,\dots,m\} .
\ee
Recall that $f^\hm$ denotes the homogeneous part of the highest degree for $f$.
The following is the classical Putinar-Vasilescu's Positivstellensatz.

\begin{theorem} (\cite[Theorems~1,2]{putinar1999positive}) \label{thm5.1}
	Let $f,g_1,\dots ,g_m \in \mathbb{R}[x]$ and $S$ be
	as in $(\ref{5.1})$. Then,  we have:
	\bit
	
	\item [(i)]	
	Suppose $f$, $g_1,\dots ,g_m$ are homogeneous polynomials of even degrees.
	If $f >0$ on $S \backslash \{0\}$, then
	$\|x\|^{2k}f \in \qmod{g}$ for some power $k \in \mathbb{N}$.
	
	\item [(ii)]
	If the form $f^\hm$ is positive definite in $\re^n$ and $f >0$ on $S$, then
	$(1+\|x\|^2)^kf \in \qmod{g}$ for some power $k \in \mathbb{N}$.
	
	\eit
\end{theorem}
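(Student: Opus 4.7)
The plan for both parts is to reduce to a compact setting where Putinar's Positivstellensatz (Theorem~\ref{thm2.1}) applies, and then extract the desired denominator form by a homogenization argument. For part (i), I would first compactify by imposing the sphere constraint $\|x\|^2 = 1$. The intersection $S' = S \cap \{\|x\|^2 = 1\}$ is compact, and the combined cone $Q = \qmod{g_1,\dots,g_m} + \ideal{\|x\|^2 - 1}$ is archimedean, since $R - \|x\|^2 \equiv R - 1 \pmod{\ideal{\|x\|^2-1}}$ is a positive constant for $R > 1$. Since $f$ is homogeneous and $f > 0$ on $S \setminus \{0\}$, $f$ is strictly positive on the compact set $S'$, so Theorem~\ref{thm2.1} yields
\[
  f \,=\, \sigma_0 + \sum_{i=1}^m g_i\,\sigma_i + p\,(\|x\|^2 - 1)
\]
for SOS polynomials $\sigma_0,\dots,\sigma_m$ and some polynomial $p$. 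Working modulo $\ideal{\|x\|^2-1}$, the identity $f \equiv \sigma_0 + \sum g_i \sigma_i$ holds in the quotient ring; lifting it back to $\re[x]$ by multiplication by $\|x\|^{2k}$ for $k$ large enough produces the desired representation $\|x\|^{2k} f \in \qmod{g}$, using that each non-homogeneous SOS summand, after multiplication by $\|x\|^{2\ell}$ for suitable $\ell$, can be rewritten as an SOS of polynomials whose degrees are compatible with the homogeneity of $f$.

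For part (ii), my plan is to reduce to (i) by homogenization in an extra variable $x_0$. After first ensuring that all degrees are even (by replacing $f$ and any odd-degree $g_j$ with $(1+\|x\|^2)f$ and $(1+\|x\|^2)g_j$, which preserves positive definiteness of $f^\hm$ and positivity on $S$), I would form the homogenizations $\tilde f(x_0,x) = x_0^d f(x/x_0)$ and $\tilde g_j(x_0,x) = x_0^{d_j} g_j(x/x_0)$, which are even-degree forms on $\re^{n+1}$. Positivity of $f$ on $S$ gives $\tilde f > 0$ wherever $x_0 \ne 0$ and $\tilde g_j \ge 0$, while positive definiteness of $f^\hm$ gives $\tilde f(0,x) = f^\hm(x) > 0$ for $x \ne 0$; thus $\tilde f > 0$ on $\widetilde S \setminus \{0\}$ where $\widetilde S = \{\tilde g_j \ge 0\}$. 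Applying part (i) in $\re^{n+1}$ yields $(x_0^2 + \|x\|^2)^k \tilde f \in \qmod{\tilde g_1,\dots,\tilde g_m}$ for some $k$, and evaluating at $x_0 = 1$ dehomogenizes to $(1+\|x\|^2)^k f \in \qmod{g_1,\dots,g_m}$, as required.

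The main obstacle is the lifting step in (i): converting the congruence $f \equiv \sigma_0 + \sum g_i \sigma_i \pmod{\ideal{\|x\|^2-1}}$ into an honest polynomial identity of the form $\|x\|^{2k} f \in \qmod{g}$. Since the SOS polynomials $\sigma_i$ can carry monomials of odd degree, naively multiplying by a power of $\|x\|^2$ does not immediately produce a polynomial expression that remains SOS. One must decompose each $\sigma_i = \sum_j q_{ij}^2$ into homogeneous components of the $q_{ij}$, match parities, and reassemble the pieces into new squares after absorbing appropriate powers of $\|x\|^2$; this degree-matching/parity bookkeeping is the technical heart of the Putinar--Vasilescu argument.
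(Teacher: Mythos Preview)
The paper does not prove Theorem~\ref{thm5.1}; it is quoted from \cite{putinar1999positive} as background for the extensions in Section~\ref{sc:PV}. So there is no ``paper's own proof'' to compare against directly. That said, your strategy for both parts is the standard one, and the paper uses essentially the same mechanism when proving the related Theorems~\ref{lemma3.14} and~\ref{thm5.5}.

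For part~(i), your plan is correct. The paper's proof of Theorem~\ref{lemma3.14} carries out the lifting step you flag as the main obstacle via a cleaner device than your proposed homogeneous-component bookkeeping: substitute $x\mapsto x/\|x\|$ in the Putinar identity (which kills the $(\|x\|^2-1)$ term and rescales $f$ and the $g_i$ by powers of $\|x\|$, by homogeneity), then symmetrize by averaging with $x\mapsto -x/\|x\|$ so that odd-degree monomials in each $\sigma_i$ cancel. What remains involves only even powers of $1/\|x\|$, which clear after multiplying by $\|x\|^{2k}$. This sidesteps the parity case-split you describe while achieving the same end.

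For part~(ii), your reduction to~(i) by homogenization is the right idea and matches the paper's proof of Theorem~\ref{thm5.5}(ii), but your degree-fixing step is wrong: multiplying an odd-degree $g_j$ by $(1+\|x\|^2)$ raises its degree by $2$ and therefore does \emph{not} change parity, so you still cannot apply part~(i). The correct fix, as in the paper's argument, is to adjust parity \emph{after} homogenizing: replace $\tilde g_j$ by $x_0^{\theta_j}\tilde g_j$ with $\theta_j = 2\lceil\deg(g_j)/2\rceil - \deg(g_j)\in\{0,1\}$, which makes every constraint an even-degree form in $\tilde x$. For $x_0\neq 0$ the constraint $x_0^{\theta_j}\tilde g_j\ge 0$ is equivalent to $g_j(x/x_0)\ge 0$ (since $\theta_j+\deg(g_j)$ is even), and at $x_0=0$ positivity of $\tilde f$ follows from the positive-definiteness of $f^{\hm}$; the rest of your argument then goes through. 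Note also that $\deg f$ is automatically even here, since a positive-definite form must have even degree, so no adjustment of $f$ is needed.
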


For the case that $S = \re^n$, if $f$ is a positive definite form, then
$\|x\|^{2k}f$ is SOS for some power $k$.
This conclusion is referred to as the
Reznick's Positivstellensatz and it is shown in \cite{reznick2000some}.

First, we generalize the item (i) of Theorem~\ref{5.1}.
Consider the normalized optimization problem
\be  \label{5.2}
\left\{ \baray{rl}
\min & f(x) \\
\st  & g_{j}(x)\geq 0,~j=1,\ldots,m, \\
& \|x\|^2-1=0.
\earay \right.
\ee

\begin{theorem} 	\label{lemma3.14}
	Let $S$ be the set as in $(\ref{5.1})$.
	Suppose $f, g_1,\dots ,g_m$ are homogeneous polynomials of even degrees
	such that $f \ge 0$ on $S$. If
	the LICQC, SCC and SOSC hold at every minimizer of \reff{5.2}, then
	$\|x\|^{2k} f \in \qmod{g}$ for some $k \in \mathbb{N}$.
\end{theorem}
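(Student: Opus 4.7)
The plan is to combine the classical compact-set finite convergence with a homogenization trick tailored to the even-degree setting.

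\textbf{Step 1.} Because the equality $\|x\|^2-1=0$ forces the feasible set of \reff{5.2} to lie in the unit sphere, the sum $\ideal{\|x\|^2-1}+\qmod{g}$ is archimedean. Under the assumed LICQC, SCC and SOSC at every minimizer, Theorem~1.1 of \cite{nieopcd} yields $f-f_{\min}=\sigma_0+\sum_j g_j\sigma_j+q(\|x\|^2-1)$ with $\sigma_0,\sigma_j\in\Sigma[x]$ and $q\in\re[x]$. Since $f\ge 0$ on $S$, one has $f_{\min}\ge 0$, and I absorb it into $\sigma_0$, giving
\[
f=\sigma_0+\sum_{j=1}^m g_j\sigma_j+q(\|x\|^2-1).
\]

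\textbf{Step 2.} Both sides are invariant under $x\mapsto -x$, since $f$ and all $g_j$ have even degree. Averaging each summand with its $x\mapsto -x$ image keeps $\sigma_0,\sigma_j\in\Sigma[x]$ and forces $\sigma_0,\sigma_j,q$ to contain only monomials of even total degree in $x$; in particular their degrees are all even. I then introduce a new variable $x_0$, pick an even integer $D\ge\max\{d,\deg\sigma_0,\max_j\deg(g_j\sigma_j),\deg q+2\}$, substitute $x\to x/x_0$ and multiply through by $x_0^D$. Using the homogeneity of $f$ and $g_j$, this produces the polynomial identity in $(x_0,x)$
\[
x_0^{2k}f(x)=x_0^{D-\deg\sigma_0}\tilde{\sigma}_0+\sum_j g_j(x)\,x_0^{D-d_j-\deg\sigma_j}\tilde{\sigma}_j+x_0^{D-\deg q-2}\tilde{q}(x_0,x)(\|x\|^2-x_0^2),
\]
where $2k:=D-d$ and each $\tilde p$ is the standard homogenization of $p$. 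The first two groups of summands are sums of squares in $\re[x_0,x]$, and the parity choices above guarantee that every exponent of $x_0$ appearing anywhere is even.

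\textbf{Step 3.} I substitute $x_0^2 \to \|x\|^2$ (reduction modulo $\ideal{x_0^2-\|x\|^2}$). The last summand vanishes and the left-hand side becomes $\|x\|^{2k}f(x)$. For each SOS summand $\Sigma(x_0,x)\in\Sigma[x_0,x]$ of even parity in $x_0$, write each square as $(p_i^E(x_0^2,x)+x_0 p_i^O(x_0^2,x))^2$; evenness in $x_0$ kills the cross terms, yielding $\Sigma=A(x_0^2,x)+x_0^2 B(x_0^2,x)$ with $A,B\in\Sigma[y,x]$. Substituting $x_0^2\to\|x\|^2$ produces $A(\|x\|^2,x)+\|x\|^2 B(\|x\|^2,x)\in\Sigma[x]$, since $\|x\|^2=\sum_i x_i^2$ and $x_i^2\cdot\Sigma[x]\subseteq\Sigma[x]$. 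Applying this to each SOS piece leaves $\|x\|^{2k}f=\sigma_0^{\ast}+\sum_j g_j\sigma_j^{\ast}$ with $\sigma_0^{\ast},\sigma_j^{\ast}\in\Sigma[x]$, i.e.\ $\|x\|^{2k}f\in\qmod{g}$.

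The principal obstacle is the parity bookkeeping in Step~2: the representation given by \cite{nieopcd} carries no a priori structure in $x$, and without exploiting the even-degree hypothesis on $f$ and the $g_j$ via the $x\mapsto -x$ averaging, the substitution $x_0^2\to\|x\|^2$ would leave odd powers of $x_0$ that are not polynomials in $x$, breaking the SOS property.
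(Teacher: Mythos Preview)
Your proof is correct and rests on the same two ingredients as the paper: Nie's finite-convergence theorem applied to the sphere-constrained problem \reff{5.2}, followed by a parity/homogenization trick exploiting that $f$ and the $g_j$ have even degree. The paper's execution is more direct: it substitutes $x\mapsto x/\|x\|$ straight into the identity $f=\sum_i\sigma_ig_i+h(\|x\|^2-1)$ (which kills the $h$-term), then averages with $x\mapsto -x$ to cancel odd-degree terms, and simply asserts that clearing denominators yields $\|x\|^{2k}f\in\qmod{g}$. Your route through the auxiliary variable $x_0$ and the substitution $x_0^2\mapsto\|x\|^2$ is an equivalent but more algebraic packaging of the same argument; in particular, your Step~3 (splitting each square by $x_0$-parity and using $\|x\|^2=\sum_ix_i^2$ to keep the result SOS) makes explicit exactly the point the paper glosses over in its final sentence.
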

\begin{proof}
	
	By Theorem~1.1 of \cite{nieopcd}, since $f_{\min} \ge 0$, there exist polynomials
	$\sigma_i \in \Sigma[x]$ and $h \in \mathbb{R}[x]$
	such that (let $g_0 \coloneqq 1$)
	\[
	f =  \sum\limits_{i=0}^{m} \sigma_{i}g_i +h \cdot(\|x\|^2-1) .
	\]
	Let $2d_0 \coloneqq \deg(f)$ and $2d_i \coloneqq \deg(g_i)$.	
	In the above identity,
	if we substitute $x_i$ for $\frac{x_i}{\|x\|}$, then we get
	\be  \nn
	\frac{f(x)}{\|x\|^{2d_0}}=  \sum\limits_{i=0}^{m}
	\sigma_{i}\left(\frac{x}{\|x\|}\right)\frac{g_i(x)}{\|x\|^{2d_i}} .
	\ee
	Furthermore, we can also get
	\be  \nn
	\frac{2f(x)}{\|x\|^{2d_0}}=  \sum\limits_{i=0}^{m}
	\Big[\sigma_{i}\left(\frac{x}{\|x\|}\right) +
	\sigma_{i}\left(\frac{-x}{\|x\|}\right)\Big]
	\frac{g_i(x)}{\|x\|^{2d_i}} .
	\ee
	Note that the odd degree terms in $\sigma_{i}\left(\frac{x}{\|x\|}\right) +
	\sigma_{i}\left(\frac{-x}{\|x\|}\right)$ are cancelled.
	The above implies that $\|x\|^{2k}f \in \qmod{g}$
	when $k$ is large enough.
\end{proof}

Second, we generalize the item (ii) of Theorem~\ref{5.1}.
Consider the optimization
\be  \label{5.3}
\left\{ \baray{rl}
\min   & f(x) \\
\st & g_{j}(x)\geq 0,~j=1,\dots,m. \\		
\earay \right.
\ee
For convenience, we still let
$f_{\min}$ denote the minimum value of \reff{5.3}.
The homogenized optimization problem is
\be   \label{5.4}
\left\{ \baray{rl}
\min &  \tilde{f}(\tilde{x}) - f_{\min} \cdot (x_0)^d \\
\st   &  \tilde{g}_{1}(\tilde{x}) \geq 0, \ldots, \tilde{g}_{m}(\tilde{x}) \geq 0,\\
& \|\tilde{x}\|^2-1=0.
\earay \right.
\ee

\begin{theorem}  \label{thm5.5}
	Let $S$ be as in \reff{5.1}. Suppose  $f \geq0$ on $S$, then we have:
	
	\bit
	
	\item [(i)]
	Suppose $S$ is closed at $\infty$ and the degrees of $f,g_1,\dots ,g_m$ are even. If the LICQC, SCC and SOSC hold at every minimizer of \reff{5.3}, including the one at infinity,
	then $(1+\|x\|^2)^{k} f \in \qmod{g}$ for some $k \in \mathbb{N}$.
	
	\item [(ii)]
	If the form $f^\hm$ is positive definite in $\re^n$ and
	the LICQC, SCC and SOSC hold at every minimizer of \reff{5.3},
	then $(1+\|x\|^2)^{k}f \in \qmod{g}$ for some $k \in \mathbb{N}$.
	
	\eit
\end{theorem}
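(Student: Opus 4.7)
The plan is to pass through the homogenization and then dehomogenize via a sphere substitution and an averaging step. For part~(i) I will apply Theorem~\ref{thm:ficvg:even} to the homogenization of \reff{5.3}: with $S$ closed at $\infty$, all relevant degrees even, the ideal $\ideal{\|\tilde{x}\|^2-1}$ real radical, and the hypothesized optimality conditions, that theorem yields a polynomial $\phi$ and SOS $\sigma_i$ with
\[
\tilde{f}(\tilde{x})-f_{\min}\tilde{x}_0^d \,=\, \phi(\tilde{x})(\|\tilde{x}\|^2-1) + \sigma_0(\tilde{x}) + \sum_j \tilde{g}_j(\tilde{x})\sigma_j(\tilde{x}).
\]
Restricting to the unit sphere $\|\tilde{x}\|^2=1$ kills the $\phi$-term. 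The $\tilde{x}\mapsto -\tilde{x}$ averaging, valid because $d$ and all $\deg g_j$ are even, makes every $\sigma_j(\tilde{x}/\|\tilde{x}\|)$ involve only even-degree monomials in $\tilde{x}/\|\tilde{x}\|$, exactly as in the proof of Theorem~\ref{lemma3.14}. Multiplying by $\|\tilde{x}\|^{2N}$ for large $N$ clears the $\|\tilde{x}\|$-denominators, and then setting $\tilde{x}_0=1$, so that $\|\tilde{x}\|^2=1+\|x\|^2$, produces
\[
(1+\|x\|^2)^{N-d/2}\bigl(f(x)-f_{\min}\bigr) \,=\, \hat\sigma_0(x) + \sum_j g_j(x)\hat\sigma_j(x)
\]
with SOS $\hat\sigma_i$. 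Since $f\ge 0$ on $S$ forces $f_{\min}\ge 0$ and $(1+\|x\|^2)^{N-d/2}f_{\min}$ is itself SOS, adding it to both sides delivers $(1+\|x\|^2)^{N-d/2}f\in\qmod{g}$.

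Part~(ii) reduces to part~(i) via a modification that enforces even degrees in the homogenized problem. Positive definiteness of $f^\hm$ forces $d$ to be even and, together with Lemma~\ref{4.1}, makes $f$ coercive, so $f_{\min}$ is attained and $f^\hm>0$ on the unit sphere rules out any minimizer at $\infty$. For each $g_j$ of odd degree I will replace the homogenized constraint $\tilde{g}_j\ge 0$ by $\hat g_j\coloneqq\tilde{x}_0\tilde{g}_j\ge 0$ (now of even degree) and I will drop $\tilde{x}_0\ge 0$. On the resulting modified feasible set on the sphere, $\tilde{f}-f_{\min}\tilde{x}_0^d\ge 0$: the $\tilde{x}_0>0$ hemisphere matches the original problem, the $\tilde{x}_0<0$ hemisphere inherits this by the even parity of $\tilde{f}$ and the $\hat g_j$'s under $\tilde{x}\mapsto -\tilde{x}$, and on the equator $\tilde{x}_0=0$ the value is $f^\hm(x)>0$. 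Hence the minimizers are exactly the regular pairs $\pm (1,x^*)/\sqrt{1+\|x^*\|^2}$ for $x^*$ minimizing \reff{5.3}. At active constraints, the relation $\nabla\hat g_j = \tilde{g}_j\nabla\tilde{x}_0 + \tilde{x}_0\nabla\tilde{g}_j = \tilde{x}_0\nabla\tilde{g}_j$ makes the active gradients proportional to those of $\tilde{g}_j$, so LICQC and SCC transfer with rescaled multipliers $\hat\lambda_j=\lambda_j/\tilde{x}_0$; and the extra cross-terms in the Hessian of the modified Lagrangian (involving $\nabla\tilde{x}_0(\nabla\tilde{g}_j)^{\mathrm{T}} + \nabla\tilde{g}_j(\nabla\tilde{x}_0)^{\mathrm{T}}$) vanish on the active-constraint tangent space where $v\cdot\nabla\tilde{g}_j=0$, so SOSC also transfers; the mirror minimizer is handled by the $\tilde{x}\mapsto-\tilde{x}$ symmetry. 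Applying the archimedean optimality-conditions result used in the proof of Lemma~\ref{hom:opt} to the modified problem then gives a representation in which every term is even in $\tilde{x}$, and the sphere substitution, symmetrization and evaluation at $\tilde{x}_0=1$ from part~(i) delivers $(1+\|x\|^2)^{N-d/2}f\in\qmod{g}$; the key observation is that $(\tilde{x}_0\tilde{g}_j)|_{\tilde{x}_0=1}=g_j(x)$, so the terms coming from odd-degree $g_j$ still land inside $\qmod{g}$.

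The main obstacle will be the SOSC transfer in part~(ii): verifying at the regular minimizer and at its mirror image that the tangent spaces of active constraints really coincide with those for \reff{5.3}, and that the extra Hessian cross-terms induced by the $\hat g_j$ modification vanish on those tangent spaces. A secondary technical point is the parity bookkeeping in the averaging step, ensuring that no odd powers of $\|\tilde{x}\|$ survive after the multiplication by $\|\tilde{x}\|^{2N}$, which requires treating the even-degree and $\tilde{x}_0$-adjusted odd-degree constraints together so that every exponent of $\|\tilde{x}\|$ that appears in the final identity is even.
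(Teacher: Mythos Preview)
Your proposal is correct and follows essentially the same approach as the paper. For part~(i) you invoke Theorem~\ref{thm:ficvg:even} and then apply the sphere-substitution/averaging/dehomogenization of Theorem~\ref{lemma3.14}, exactly as the paper does; your explicit handling of the $f_{\min}$ term is the only cosmetic difference. For part~(ii) your modification $\hat g_j=\tilde x_0\tilde g_j$ for odd-degree $g_j$ is precisely the paper's $x_0^{\theta_j}\tilde g_j$, and your exclusion of minimizers at infinity via positive definiteness of $f^\hm$ matches the paper; where the paper simply writes ``by similar arguments as for Theorem~\ref{lemma3.11}'' you have worked out the transfer of LICQC, SCC and SOSC in detail (the gradient scaling, the multiplier rescaling $\hat\lambda_j=\lambda_j/\tilde x_0$, and the vanishing of the Hessian cross-terms on the tangent space), which is a useful elaboration but not a different route.
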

\begin{proof}
	(i) It follows from Theorem~\ref{lemma3.11} that the LICQC, SCC and SOSC
	hold at every  regular minimizer of \reff{5.4}. By Theorem~\ref{thm:ficvg:even}, since $f_{\min} \ge 0$,
	there exist polynomials $\sigma_i \in \Sigma[\tilde{x}]$
	and $h \in \mathbb{R}[\tilde{x}]$ such that (note $g_0 = 1$)
	\be   	\label{eq3.9}
	\tilde{f}  =  \sum\limits_{i=0}^{m}
	\sigma_{i}\tilde{g}_i +h \cdot(\|\tilde{x}\|^2-1).
	\ee
	As in the proof of Theorem $\ref{lemma3.14}$, we can similarly show that
	\[
	\|\tilde{x}\|^{2k}\tilde{f} \in \qmod{ \tilde{g}_1,\dots,\tilde{g}_m },
	\]
	for some $k \in \mathbb{N}$. Substituting $x_0$ for $1$,
	we get $(1+\|x\|^2)^{k}f \in \qmod{g}$.

	\smallskip \noindent
	(ii) For each $i$, let
	$ \theta_i  \coloneqq  2\lceil \frac{\deg(g_i)}{2}\rceil-\deg(g_i).$
	We consider the following homogenized optimization problem
	\be 	\label{5.6}
	\left\{ \begin{array}{rl}
		\min\limits_{ \tilde{x} \in \re^{n+1} }  &
		\tilde{f}( \tilde{x} )  - f_{\min} \cdot (x_0)^d  \\
		\st  & x_0^{\theta_1}\tilde{g}_{1}(\tilde{x}) \geq 0, \ldots,
		x_0^{\theta_m}\tilde{g}_{m}(\tilde{x}) \geq 0, \\
		&    \| \tilde{x} \|^2 -1 = 0.
	\end{array} \right.
	\ee
	Note that $f_{\min} \ge 0$ and the degree of $f$ is even. Let
	$\tilde{u}=(u_0,u)$ be a feasible point of \reff{5.6}.
	If $u_0=0$, then $\|u\|=1$ and
	$\tilde{f}(\tilde{u}) - f_{\min} \cdot (u_0)^d =f^\hm(u)>0$,
	since $f^\hm$ is a positive definite form.  If $u_0\neq 0$,
	then $u/u_0$ is feasible for \reff{5.3} and
	\[
	\tilde{f}( \tilde{u} )  - f_{\min} \cdot (u_0)^d =
	u_0^d \big( f(u/u_0) - f_{\min} \big) \geq 0 .
	\]
	Hence, the optimal value of \reff{5.6} is zero.
	By similar arguments as for Theorem~\ref{lemma3.11}, the LICQC, SCC and SOSC
	hold at every  regular minimizer of \reff{5.6}. Since $f^\hm$ is a positive definite form,
	\reff{5.6} has no minimizers at infinity.
	Note that each $x_0^{\theta_i}\tilde{g}_{i}$ is a polynomial of even degree.
	As for part (i), we can similarly show that
	$(1+\|x\|^2)^{k}f \in \qmod{g}$ for some $k \in \mathbb{N}$.
\end{proof}

When one of the LICQC, SCC and SOSC fails to hold, the conclusion
$(1+\|x\|^2)^{k}f \in \qmod{g}$ may not hold.
We refer to \cite{MLM21} for such examples.
When there is an equality constraint in \reff{1.1},
we cannot simply replace $c_i(x)=0$ by two inequalities
$c_i(x)\geq 0$ and $-c_i(x)\geq0$,
since the LICQC always fails for the latter case.
For the case of equality constrains,
we need to assume that $\ideal{\tilde{c}_{\mathcal{E}}}$
is real radical.
For convenience of notation, denote the polynomial tuples
\be \nonumber
c_{\mc{E}}  \coloneqq  \big\{ c_i(x) \big\}_{i\in \mc{E} }, \quad
c_{\mc{I}}  \coloneqq  \big\{ c_j(x) \big\}_{j \in \mc{I} }.
\ee The following is the Positivstellensatz
for sets with equality constraints.

\begin{thm}  	\label{thm5.6}
	Let $K$ be the feasible set of \reff{1.1}.
	Assume that $\ideal{\tilde{c}_{\mathcal{E}}}$ is real radical
	and $f \ge 0$ on $K$, then we have:
	\bit
	
	\item [(i)] Suppose $K$ is closed at $\infty$
	and the degrees of $f,c_{i}~(i \in \mathcal{I} )$ are all even.
	If the LICQC, SCC and SOSC hold at every minimizer
	of \reff{1.1}, including the one at infinity,
	then $(1+\|x\|^2)^{k}f \in \ideal{c_{\mc{E}}}+ \qmod{c_{\mc{I}}}$
	for some $k \in \mathbb{N}$.

	\item [(ii)]
	Suppose the form $f^\hm$ is positive definite in $\re^n$.
	If the LICQC, SCC and SOSC hold at every minimizer of \reff{1.1}, then
	$(1+\|x\|^2)^{k}f \in \ideal{c_{\mc{E}}} + \qmod{ c_{\mc{I}} }$
	for some $k \in \mathbb{N}$.
	
	\eit
\end{thm}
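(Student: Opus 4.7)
The plan is to adapt the proof of Theorem~\ref{thm5.5} to the setting with equality constraints, reducing both parts to Theorem~\ref{thm:ficvg:even}. Throughout, $f\ge 0$ on $K$ implies $f_{\min}\ge 0$.

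For part~(i), all inequality polynomials $c_j$ ($j\in\mathcal{I}$) and the objective $f$ have even degrees, so Theorem~\ref{thm:ficvg:even} is directly applicable to the homogenized problem \reff{3.5}: by Theorem~\ref{lemma3.11} the LICQC, SCC and SOSC transfer from every regular minimizer of \reff{1.1} to the corresponding minimizer of \reff{3.5}, while the hypothesis on minimizers at infinity handles the rest, and the real-radical assumption on $\ideal{\tilde{c}_{\mathcal{E}}}$ is exactly what the theorem requires. Hence there exist $\phi\in\ideal{\tilde{c}_{\mathcal{E}}}$ and $\sigma\in\qmod{\tilde{c}_{\mathcal{I}^{*}}}$ with $\tilde{f}(\tilde{x})-f_{\min}x_0^d = \phi + \sigma$; since $f_{\min}\ge 0$ and $d$ is even, $f_{\min}x_0^d$ is itself SOS, so $\tilde{f}(\tilde{x})\in\ideal{\tilde{c}_{\mathcal{E}}}+\qmod{\tilde{c}_{\mathcal{I}^{*}}}$. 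I would then repeat the dehomogenization procedure of Lemma~\ref{lemma3.14}: substitute $\tilde{x}\mapsto\tilde{x}/\|\tilde{x}\|$ (so the $\|\tilde{x}\|^2-1$ generator in $\ideal{\tilde{c}_{\mathcal{E}}}$ vanishes), symmetrize via $\tilde{x}\mapsto -\tilde{x}/\|\tilde{x}\|$ to cancel odd-degree monomials in the polynomial multipliers (using that every $\tilde{c}_i,\tilde{c}_j,\tilde{f}$ is even-degree), and multiply by a large enough $\|\tilde{x}\|^{2N}$ to clear all denominators. The resulting polynomial identity $\|\tilde{x}\|^{2k}\tilde{f}(\tilde{x})\in\ideal{\{\tilde{c}_i\}_{i\in\mathcal{E}}}+\qmod{\{\tilde{c}_j\}_{j\in\mathcal{I}}}$ becomes the claim after setting $x_0=1$, which turns $\|\tilde{x}\|^{2k}$ into $(1+\|x\|^2)^k$ and dehomogenizes each $\tilde{c}_\ell$ back to $c_\ell$.

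For part~(ii), positive definiteness of $f^\hm$ forces $d$ to be even, but some $c_j$ may still have odd degree, so I would instead apply Theorem~\ref{thm:ficvg:even} to the modified problem obtained from \reff{3.5} by replacing each $\tilde{c}_j$ ($j\in\mathcal{I}$) with $x_0^{\theta_j}\tilde{c}_j$, where $\theta_j\coloneqq 2\lceil\deg(c_j)/2\rceil-\deg(c_j)$, leaving the equality polynomials untouched. All inequality polynomials of this modified problem now have even degree; its equality ideal remains $\ideal{\tilde{c}_{\mathcal{E}}}$, hence still real radical; and the positivity of $f^\hm$ makes $\tilde{f}-f_{\min}x_0^d$ strictly positive on $\{x_0=0,\|\tilde{x}\|=1\}$, so there are no minimizers at infinity. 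At a regular minimizer $(x_0^*,x^*)$ with $x_0^*>0$, an active constraint $x_0^{\theta_j}\tilde{c}_j=0$ forces $\tilde{c}_j(x_0^*,x^*)=0$, and $\nabla(x_0^{\theta_j}\tilde{c}_j) = (x_0^*)^{\theta_j}\nabla\tilde{c}_j$ at that point; this positive rescaling of gradients lets a direct adaptation of Theorem~\ref{lemma3.11} carry the LICQC, SCC and SOSC through. Theorem~\ref{thm:ficvg:even} then yields a representation in $\ideal{\tilde{c}_{\mathcal{E}}}+\qmod{\{x_0^{\theta_j}\tilde{c}_j\}_{j\in\mathcal{I}}}$, and the same symmetrization/dehomogenization as in part~(i) produces the required $(1+\|x\|^2)^k f \in \ideal{c_{\mathcal{E}}}+\qmod{c_{\mathcal{I}}}$. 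The main technical obstacle is precisely this last transfer of optimality conditions to the modified problem: one must check that Lagrange multipliers, the KKT relations, and the reduced Hessian of the Lagrangian all behave correctly under the extra $x_0^{\theta_j}$ factor. Once that analog of Theorem~\ref{lemma3.11} is in place, the remainder is routine degree-parity bookkeeping in the symmetrization step.
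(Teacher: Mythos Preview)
Your proposal is correct and follows essentially the same approach as the paper: reduce part~(i) to Theorem~\ref{thm:ficvg:even} via Theorem~\ref{lemma3.11}, then run the substitution-and-symmetrization trick of Theorem~\ref{lemma3.14} and dehomogenize; for part~(ii), first make the inequality degrees even via the $x_0^{\theta_j}$ factors, observe that positive definiteness of $f^\hm$ kills minimizers at infinity, invoke the analog of Theorem~\ref{lemma3.11} for the modified problem, and conclude as before. One small slip: in part~(i) you write ``using that every $\tilde{c}_i,\tilde{c}_j,\tilde{f}$ is even-degree,'' but the hypotheses say nothing about the degrees of the equality polynomials $c_i$ ($i\in\mathcal{E}$); this does not break the argument, since for the ideal part the symmetrization only needs the multiplier of $\tilde{c}_i$ to become a polynomial (not an SOS), and whether $\deg(c_i)$ is even or odd the combination $h_i(\tilde{x}/\|\tilde{x}\|)\pm h_i(-\tilde{x}/\|\tilde{x}\|)$ times $\tilde{c}_i(\tilde{x})$ still has only even-degree terms and clears to a polynomial after multiplying by a power of $\|\tilde{x}\|^2$.
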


\begin{proof}
	(i) Since the degrees of $f,c_{i}~(i \in \mathcal{I})$ are all even,  we
	consider the optimization problem \reff{hom:dis}.
	The conclusion can be shown in the same way
	as for item (i) of Theorem~ \ref{thm5.5}.

	\smallskip \noindent
	(ii) For each $j \in \mathcal{I} $, let
	$ \theta_j  \coloneqq  2\lceil \frac{\deg(c_j)}{2}\rceil-\deg(c_j).$
	Consider the following homogenized optimization problem
	\be \label{opt:PV:eq}
	\left\{ \baray{rl}
	\min &  \tilde{f}(\tilde{x})  - f_{\min} \cdot (x_0)^d \\
	\st &   \tilde{c}_{i}(\tilde{x})=0~(i \in \mathcal{E}),\\
	& x_0^{\theta_j} \tilde{c}_{j}(\tilde{x}) \geq 0~(j \in \mathcal{I}),\\
	&  \|\tilde{x}\|^2-1=0.
	\earay \right.
	\ee
	Since $f^\hm$ is a positive definite form,
	the optimization \reff{opt:PV:eq} has no minimizers at infinity.
	By similar arguments as for Theorem~\ref{lemma3.11},  the optimality conditions LICQC, SCC and SOSC
	hold at every regular minimizer of \reff{opt:PV:eq}.
	The conclusion can be shown in the same way
	as for item (ii) of Theorem~ \ref{thm5.5}.
\end{proof}

\section{The Moment-SOS hierarchy with denominators}
\label{sc:denom}

The Putinar-Vasilescu's Positivstellensatz motivates
Moment-SOS relaxations with denominators for solving polynomial optimization.
In view of Theorem~\ref{thm5.6},
we consider the hierarchy of relaxations ($d=\deg(f)$)
\be  \label{rel2}
\left\{\baray{rl}
\max  &   \gamma \\
\st  &  (1+\|x\|^2)^{k-\lceil \frac{d}{2}\rceil} (f-\gamma) \in
\ideal{c_{\mathcal{E}} }_{2k} + \qmod{ c_{\mathcal{I}} }_{2k} ,
\earay \right.
\ee
for the order $k \ge \lceil \frac{d}{2}\rceil$.
The relaxation \reff{rel2} is essentially expressing
$f-\gamma$ in terms of sums of squares of rational polynomials,
with the denominator a power of $1+\|x\|^2$.
Let $f_k^\den$ denote the optimal value of \reff{rel2}
for the relaxation order $k$.
The following is the comparisons between the relaxations
\reff{3.3} (or \reff{hmg:even:SOS} for the even degree case)
and \reff{rel2}:
\bit

\item The polynomials in \reff{rel2} are only in
the variable $x$, while the polynomials in
\reff{3.3} and \reff{hmg:even:SOS}
are in both $x$ and $x_0$.

\item The relaxation~\reff{3.3} uses the constraint
$x_0 \ge 0$, while \reff{rel2} and \reff{hmg:even:SOS}
do not use it. So \reff{3.3} is the strongest among them.

\item When the degrees of $f$ and $c_i$ ($i \in \mc{I}$)
are all even, the relaxation \reff{hmg:even:SOS}
is equivalent to \reff{rel2}.
This can be observed as in the proof of
Theorem~\ref{lemma3.14}. For the special case that
$f$ is an even degree form and $K$ is the unit sphere,
the equivalence is shown in \cite{dlp}.

\eit

We remark that \reff{rel2} is the relaxation given in \cite{MLM21}
for the parameter $\epsilon=0$.
It is conjectured in \cite[Sec.~4.2]{MLM21}
that the hierarchy of \reff{rel2} has finite convergence,
under some optimality conditions.
We prove this conjecture is true
under the assumptions of Theorem~\ref{thm5.6}.
Recall that for the even degree case,
the LICQC, SCC and SOSC are said to hold at a minimizer  at infinity $x^*$
if they hold for \reff{hom:dis} at $(0,x^*)$.

\begin{thm}  \label{thm:conj:MLM21}
	Let $K$ be the feasible set of \reff{1.1}.
	Assume $\ideal{\tilde{c}_{\mathcal{E}}}$ is real radical.
	
	\bit
	
	\item [(i)] Suppose $K$ is closed at $\infty$
	and the degrees of $f,c_{i}~(i \in \mathcal{I} )$ are even.
	If the LICQC, SCC and SOSC hold at every  minimizer of \reff{1.1}, including the one at infinity,
	the hierarchy of \reff{rel2} has finite convergence, i.e.,
	$f_k^\den = f_{\min}$ for all $k$ big enough.

	\item [(ii)]
	Suppose $f^\hm$ is a positive definite form in $\re^n$.
	If the LICQC, SCC and SOSC hold at  every  minimizer of \reff{1.1},
	then $f_k^\den = f_{\min}$ for all $k$ big enough.
	
	\eit
\end{thm}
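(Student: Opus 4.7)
My plan is to derive this theorem directly from the Positivstellensatz Theorem~\ref{thm5.6} applied to the shifted objective $f - f_{\min}$. Shifting the objective by a constant changes neither the feasible set nor the gradients, active sets, Lagrange multipliers, or Hessians that appear in the LICQC, SCC, and SOSC; hence those optimality-condition hypotheses transfer verbatim from $f$ to $f - f_{\min}$. Moreover, $f - f_{\min} \geq 0$ on $K$ by definition of $f_{\min}$, and when $\deg f \geq 1$ we have $(f - f_{\min})^\hm = f^\hm$, so part~(ii)'s positive-definiteness assumption also transfers. In part~(i) the degree of $f - f_{\min}$ remains even. Thus Theorem~\ref{thm5.6} applies in both cases and produces an integer $K \in \N$ together with a decomposition
\[
(1 + \|x\|^2)^K (f - f_{\min}) \;=\; \phi + \sigma, \quad \phi \in \ideal{c_{\mc{E}}},\; \sigma \in \qmod{c_{\mc{I}}}.
\]

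Next I would multiply both sides by $(1 + \|x\|^2)^{k - K - \lceil d/2\rceil}$ for any $k \geq K + \lceil d/2\rceil$. Since $(1+\|x\|^2)^j$ is an SOS polynomial, this multiplication preserves membership both in $\ideal{c_{\mc{E}}}$ and in $\qmod{c_{\mc{I}}}$, giving
\[
(1+\|x\|^2)^{k - \lceil d/2\rceil}(f - f_{\min}) \in \ideal{c_{\mc{E}}} + \qmod{c_{\mc{I}}}.
\]
The main bit of bookkeeping is to show that, for $k$ large enough, this representation can be chosen inside the degree-$2k$ truncations $\ideal{c_{\mc{E}}}_{2k} + \qmod{c_{\mc{I}}}_{2k}$. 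Fix representatives of $\phi$ and $\sigma$ from the initial Positivstellensatz decomposition, and absorb the extra SOS factor $(1+\|x\|^2)^{k-K-\lceil d/2\rceil}$ into the polynomial coefficients of $\phi$ and into the SOS coefficients of $\sigma$; the degrees grow linearly in $k$ in a way compatible with $2k$ because the left-hand side has degree $2k - 2\lceil d/2\rceil + d \leq 2k$.

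Once this membership at truncation order $2k$ is established, $\gamma = f_{\min}$ is feasible for \reff{rel2}, so $f_k^\den \geq f_{\min}$ for all large $k$. The reverse bound $f_k^\den \leq f_{\min}$ is immediate: any $\gamma$ feasible for \reff{rel2} forces $(1+\|x\|^2)^{k - \lceil d/2 \rceil}(f - \gamma) \geq 0$ on $K$ and hence $\gamma \leq f(x)$ for every $x \in K$. Combining these gives $f_k^\den = f_{\min}$. I expect no genuine obstacle beyond the routine degree bookkeeping sketched above; all of the real content is carried by Theorem~\ref{thm5.6}, so the proof is essentially an unpacking of that Positivstellensatz inside the Moment--SOS truncation.
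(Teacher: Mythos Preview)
Your proposal is correct and follows essentially the same route as the paper: the paper's own proof is a single line, ``The conclusions follow from Theorem~\ref{thm5.6},'' plus the observation that a positive definite $f^{\hm}$ rules out minimizers at infinity in part~(ii). You have simply unpacked that line with the natural details (shifting by $f_{\min}$, checking that the optimality hypotheses and the leading form transfer, and handling the truncation).

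One small remark on your degree bookkeeping: the fact that the left-hand side has degree at most $2k$ does not by itself force the representation to live in the $2k$-truncation, since high-degree cancellation between the ideal and quadratic-module pieces is in principle possible. The clean way to close this is to trace back to the proof of Theorem~\ref{lemma3.14}: the substitution $x \mapsto x/\|x\|$ followed by clearing denominators produces a \emph{homogeneous} representation of $\|\tilde{x}\|^{2K}\tilde{f}$ of degree exactly $2K + d$, so after dehomogenizing one gets $(1+\|x\|^2)^{K}(f - f_{\min}) \in \ideal{c_{\mathcal E}}_{2K+d} + \qmod{c_{\mathcal I}}_{2K+d}$, which is precisely the $2k$-truncation for $k = K + \lceil d/2\rceil$. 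From there your multiplication argument propagates feasibility to all larger $k$. This is indeed routine, as you anticipated.
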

\begin{proof}
	The conclusions follow from Theorem~\ref{thm5.6}.
	For the item (ii), there are no minimizers at infinity
	if $f^\hm$ is a positive definite form in $\re^n$.
\end{proof}

In Theorem~\ref{thm:conj:MLM21}(i),
the degrees of $f,c_{i}~(i \in \mathcal{I} )$
are assumed to be even, but the degrees of
equality constraining polynomials
$c_{i}~(i \in \mathcal{E} )$ can be either odd or even.

A special case of \reff{1.1} is that there are no constraints.
Then the resulting version of the relaxation \reff{rel2} is
\be  \label{sos:den:Reznic}
\left\{\baray{rl}
\max  &   \gamma \\
\st  &  (1+\|x\|^2)^{k-\lceil \frac{d}{2}\rceil} (f-\gamma)
\in \Sig[x]_{2k},
\earay \right.
\ee
for the order $k \ge \lceil \frac{d}{2}\rceil$. The degree $d$ must be even
for $f_{\min} > - \infty$, when there are no constraints.
When $f^\hm$ is a positive definite form,
the asymptotic convergence  of \reff{sos:den:Reznic}
can be shown by Reznick's Positivstellensatz
\cite[Sec.~7]{reznick2000some}.
We have the following theorem about the finite convergence.

\begin{thm}  \label{thm:uncon}
	Suppose 
	$K=\mR^n$ and $f_{\min} > - \infty$.
	Let $f_k^\den$ denote the optimal value of
	\reff{sos:den:Reznic} for the order $k$.
	
	\bit
	
	\item [(i)]
	If the SOSC holds at every minimizer of \reff{1.1}, including the one at infinity, then
	$f_k^\den = f_{\min}$ for all $k$ big enough.

	\item [(ii)]
	If $n \le 2$,  then $f_k^\den = f_{\min}$ for all $k$ big enough.
	
	\eit
\end{thm}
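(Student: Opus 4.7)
The plan is to deduce (i) from Theorem~\ref{thm:conj:MLM21}(i) applied in the constraint-free setting, and to deduce (ii) from a classical low-dimensional Positivstellensatz for ternary forms (together with the trivial univariate case).

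For (i), I would apply Theorem~\ref{thm:conj:MLM21}(i) to the unconstrained instance $K = \mathbb{R}^n$. First I would observe that because $f_{\min} > -\infty$ and there are no constraints, the degree $d = \deg(f)$ must be even (otherwise $f$ is unbounded below along some ray). The set $\mathbb{R}^n$ is trivially closed at $\infty$, and the homogenized equality tuple is just $\tilde{c}_{\mathcal{E}} = \{\|\tilde{x}\|^2 - 1\}$, whose ideal is real radical since it cuts out the unit sphere, a smooth real variety. Because there are no original equalities or inequalities, the LICQC and SCC hold vacuously at every minimizer of \reff{1.1}, including the one at infinity (where the only active constraint in the homogenized problem \reff{hom:dis} is the sphere condition, whose gradient is $2\tilde{x}^*\ne 0$). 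The only nontrivial hypothesis left is the SOSC, which is assumed. Hence Theorem~\ref{thm:conj:MLM21}(i) yields an integer $k$ with $(1+\|x\|^2)^k(f-f_{\min}) \in \Sigma[x]$, and so $f_k^\den = f_{\min}$ for all $k$ large enough.

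For (ii), the argument splits by dimension. When $n=1$, $d$ must be even and $f - f_{\min}$ is a nonnegative univariate polynomial, which is automatically a sum of squares by the classical one-variable structure theorem; then $f_k^\den = f_{\min}$ already at $k=d/2$. When $n=2$, form the homogenization $F(\tilde{x}) \coloneqq \tilde{f}(\tilde{x}) - f_{\min}\,x_0^d$. Since $d$ is even and $f_{\min}$ is finite, the leading form $f^\hm$ is nonnegative on $\mathbb{R}^n$, and separating $x_0\ne 0$ (where $F(\tilde{x})=x_0^d(f(x/x_0)-f_{\min})\ge 0$) from $x_0=0$ (where $F(\tilde{x})=f^\hm(x)\ge 0$) shows that $F$ is a nonnegative ternary form of degree $d$. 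The plan is then to invoke the classical SOS-with-denominator Positivstellensatz for nonnegative ternary forms: there exists $k\in\mathbb{N}$ with $\|\tilde{x}\|^{2k} F \in \Sigma[\tilde{x}]$. Substituting $x_0=1$ yields $(1+\|x\|^2)^k(f-f_{\min}) \in \Sigma[x]$, so $f_k^\den = f_{\min}$ for all sufficiently large $k$.

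The main obstacle is the $n=2$ case, which does not inherit any optimality hypothesis from item (i) and must rely on a genuinely dimensional fact, namely that every nonnegative ternary form becomes a sum of squares after multiplication by a power of $\|\tilde{x}\|^2$. This ternary phenomenon is special to dimension three (equivalently, $n\le 2$ on the affine side) and breaks down for $n\ge 3$, where Motzkin-type obstructions prevent the denominator from being taken as a power of $1+\|x\|^2$; thus no extension of this approach beyond $n=2$ is expected.
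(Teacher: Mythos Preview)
Your proposal is correct and follows essentially the same route as the paper: item (i) is reduced to the even-degree Positivstellensatz of Theorem~\ref{thm5.6}(i) (the paper cites it directly, you cite its corollary Theorem~\ref{thm:conj:MLM21}(i)), and item (ii) rests on the fact that a nonnegative form in at most three variables becomes SOS after multiplication by a power of $\|\tilde{x}\|^2$. The only cosmetic differences are that you handle $n=1$ separately via the univariate SOS theorem (the paper absorbs it into the $\le 3$-variable form result) and that you should attribute the ternary-form Positivstellensatz to Scheiderer~\cite{Scheiderer2006} rather than calling it ``classical.''
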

\begin{proof}
	The item (i) follows from Theorem~\ref{thm5.6} (i).
	It is shown by Scheiderer~\cite{Scheiderer2006} that if
	$p(u)$ is a nonnegative form in $u \in \re^n$
	with $n \le 3$, then $\|u \|^{2N} p(u)$ is SOS
	when $N$ is big enough. This implies that
	\[
	(1+\|x\|^2)^{k-\lceil \frac{d}{2}\rceil} ( f - f_{\min} )
	\in \Sig[x]_{2k}
	\]
	when $k$ is big enough, for the case $n \le 2$.
	The item (ii) follows from this conclusion.
\end{proof}

\section{Numerical examples}
\label{num:ex}
This section presents some examples on  the relaxations $(\ref{3.3})$  for solving the
optimization problem $(\ref{1.1})$. The computation is implemented in
MATLAB R2019a, on a Dell  Desktop with CPU@2.90GHz and RAM 32.0G. The
relaxations $(\ref{3.3})-(\ref{d3.3})$   are solved by the software {\tt GloptiPoly~3} \cite{2009GloptiPoly}, which calls
the SDP package {\tt SeDuMi} \cite{sturmusing}.
For neatness, only four decimal digits are displayed
for computational results. The feasible sets of all examples are unbounded.

\subsection{The case with minimizers}

A convenient criterion for obtaining minimizers
is the flat extension or truncation. Denote the degree
\begin{equation}
	d_K  \coloneqq \max\{ \lceil \deg(f)/2 \rceil,
	\lceil \deg(c_i)/2 \rceil
	(i \in \mathcal{E} \cup \mc{I}) \}.
\end{equation}
Suppose $y^{*}$ is a minimizer of $(\ref{d3.3})$ for the relaxation order $k$.
If there exists an integer $t \in[d_K, k]$ such that

\be \label{FT:y*}
\operatorname{rank} M_{t} [ y^* ] \, = \,
\operatorname{rank} M_{t-d_K}[ y^* ],
\ee
then we can get one or several minimizers for \reff{3.5} (see \cite{CF05,HenLas05,Lau05,nie2013certifying}).
When \reff{FT:y*} holds, we have the decomposition
\[
y^*|_{2t} = a_1 [\bpm \tau_1 \\ v_1 \epm]_{2t}
+ \cdots + a_r [\bpm \tau_r \\ v_r \epm]_{2t}
\]
for positive scalars $a_i > 0$
and distinct points $(\tau_i, v_i) \in \widetilde{K}$.
Denote two label sets
\[
I_1 = \{ i : \, \tau_i > 0\}, \quad
I_2 = \{ i : \, \tau_i = 0\}.
\]
For each $i \in I_1$, let $u_i = v_i/\tau_i$.
Then $u_i \in K$ for each $i \in I_1$
and $v_i \in K^{(1)}$ for each $i \in I_2$.
Let $\nu_i = a_i (\tau_i)^d$ for each $i \in I_1$
and $\nu_i = a_i  $ for each $i \in I_2$.

\begin{lem}  \label{lm:FT:min}
	Suppose $y^{*}$ is a minimizer of $(\ref{d3.3})$
	and the rank condition $\reff{FT:y*}$ is satisfied.
	Let each $u_i, v_i, \nu_i$ be as above.
	Then the set $I_1 \ne \emptyset$ and
	each $u_i$ ($i \in I_1$) is a minimizer for \reff{1.1},
	and each $v_i$ ($i \in I_2$) is a minimizer at infinity
	for \reff{1.1}.
\end{lem}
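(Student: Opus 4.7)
The plan is to apply the Curto--Fialkow flat extension theorem (as developed in \cite{CF05,HenLas05,Lau05,nie2013certifying}) to $y^*|_{2t}$. The rank condition \reff{FT:y*} produces an $r$-atomic representing measure $\mu = \sum_{i=1}^r a_i \delta_{(\tau_i,v_i)}$ whose support is contained in $\widetilde{K}$, since the localizing matrix conditions in \reff{d3.3} for the tuples $\tilde{c}_{\mathcal{E}}$ and $\tilde{c}_{\mathcal{I}}$ (including $\|\tilde{x}\|^2-1$ and $x_0$) force each atom to satisfy all defining (in)equalities of $\widetilde{K}$. This yields the decomposition stated in the lemma.

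Next, I would dehomogenize the atoms via the perspective map \reff{map:varphi}. For $i \in I_1$ with $\tau_i > 0$, the identity $c_\ell(u_i) = \tau_i^{-\deg(c_\ell)} \tilde{c}_\ell(\tau_i, v_i)$ together with the sign data of $\tilde{c}_{\mathcal{E}}, \tilde{c}_{\mathcal{I}}$ shows $u_i \in K$. For $i \in I_2$ with $\tau_i = 0$, the identity $\tilde{c}_\ell(0, v_i) = c_\ell^\hm(v_i)$ combined with $\|v_i\| = 1$ shows $v_i \in K^\hm$. The normalization $\langle x_0^d, y^*\rangle = 1$ from \reff{d3.3} becomes $\sum_{i \in I_1} a_i \tau_i^d = \sum_{i \in I_1} \nu_i = 1$, which in particular forces $I_1 \ne \emptyset$.

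To pin down the minimizer properties, I would use that $K$ is closed at $\infty$, so Lemma~\ref{lemma3.1} gives $\tilde{f}(\tilde{x}) - f_{\min}\, x_0^d \geq 0$ on $\widetilde{K}$. Integrating against $\mu$,
\[
\langle \tilde{f}, y^* \rangle - f_{\min} \langle x_0^d, y^* \rangle \,=\, \sum_{i=1}^r a_i \bigl( \tilde{f}(\tau_i, v_i) - f_{\min} \tau_i^d \bigr) \,\geq\, 0,
\]
so $f_k' \geq f_{\min}$. The reverse inequality is routine: for each $u \in K$, the scaled Dirac $(1+\|u\|^2)^{d/2} \delta_{\tilde{u}}$ at $\tilde{u} = (1+\|u\|^2)^{-1/2}(1,u)$ is feasible for \reff{d3.3} with objective value $f(u)$, hence $f_k' \leq \inf_{u \in K} f(u) = f_{\min}$. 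Therefore $f_k' = f_{\min}$ and every summand in the display must vanish, giving $\tilde{f}(\tau_i, v_i) = f_{\min} \tau_i^d$ for every $i$. For $i \in I_1$ this reads $f(u_i) = f_{\min}$, so $u_i$ is a minimizer of \reff{1.1}; for $i \in I_2$ it reads $f^\hm(v_i) = 0$, and Theorem~\ref{thm:prop:inf}(i) then identifies $v_i$ as a minimizer at infinity.

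The main obstacle is the flat extension step itself: verifying that the representing measure obtained from \reff{FT:y*} is supported in precisely $\widetilde{K}$, honoring every equality $\tilde{c}_i = 0$, every inequality $\tilde{c}_j \geq 0$, the sphere constraint, and the sign $x_0 \geq 0$. This is classical once the localizing matrices of \reff{d3.3} are in place and is packaged uniformly by the cited flat-extension theorems; the subsequent dehomogenization and the objective-value accounting are routine.
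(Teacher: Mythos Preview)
Your proposal is correct and follows essentially the same route as the paper: the decomposition of $y^*|_{2t}$ into atoms in $\widetilde{K}$ is exactly what the paper records before the lemma, the constraint $\langle x_0^d, y^*\rangle = 1$ forces $I_1\ne\emptyset$ in both arguments, and both deduce $f(u_i)=f_{\min}$ and $f^{\hm}(v_i)=0$ by sandwiching $\langle\tilde f, y^*\rangle$ between $f_{\min}$ and $f_{\min}$ (both proofs tacitly rely on $K$ being closed at~$\infty$). The only cosmetic difference is that the paper invokes Theorem~\ref{thm:prop:inf}(iii) for $f^{\hm}\ge 0$ on $K^{\hm}$ and leaves $f_k'\le f_{\min}$ implicit, whereas you package both via Lemma~\ref{lemma3.1} and an explicit Dirac construction.
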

\begin{proof}
	Note that $u_i \in K$ for each $i \in I_1$
	and $v_i \in K^{(1)}$ for each $i \in I_2$,
	the constraint $\langle x_0^d, y \rangle = 1$
	in \reff{d3.3} implies that
	$
	\sum_{ i \in I_1 } \nu_i   =  1.
	$
	So at least one $\nu_i >0$ and $I_1 \ne \emptyset$. Since $v_i$ $(i \in I_2)$
	is a feasible point of $K^\hm$, by item (iii) of Theorem \ref{thm:prop:inf},
	we have $f^{(1)}(v_i)\geq 0$ for  $i \in I_2$ and hence
	\[
	\begin{split}
		f_{\min} & =\langle \tilde{f}, y^* \rangle =
		\sum_{ i \in I_1 } \nu_i f(u_i) +
		\sum_{ i \in I_2 } \nu_i f^{(1)}(v_i)\\
		&\geq \sum_{ i \in I_1 } \nu_i f(u_i)
		\geq \sum_{ i \in I_1 } \nu_i f_{\min}=f_{\min}.
	\end{split}
	\]
	Therefore, $f(u_i)=f_{\min}$ for $i \in I_1$,
	and $f(v_i)=0$ for $i \in I_2$.
\end{proof}

We refer to \cite{CF05,HenLas05,Lau05,nie2013certifying}
for flat extensions and truncations.
The procedure of extracting minimizers by using \reff{FT:y*}
is implemented in
{\tt GloptiPoly~3} \cite{2009GloptiPoly}.
The rank condition \reff{FT:y*} is a sufficient
and almost necessary condition
for checking convergence of the Moment-SOS hierarchy
\cite{nie2013certifying}.

\begin{exm}
	(i) Consider the optimization (the variable $x_0 = 1$):
	\[
	\min\limits_{x \in \re^4} \quad \sum_{i=0}^{4}
	\prod_{i \ne j \in \{0, 1, \ldots, 4\} }\left(x_{i}-x_{j}\right)
	+ 0.1 \Big( \sum_{i=1}^{4}  x_i^4 \Big).
	\]
	The first part  of the objective is a nonnegative
	but non-SOS polynomial \cite{reznick2000some}.
	For the order $k=3$, we get $f_3 \approx  0.0763$ and the minimizer
	$0.5757 \cdot (1,1,1,1).$
	There are no minimizers at infinity.

	\noindent
	(ii) Consider the optimization:
	\[
	\left\{ \baray{rl}
	\min\limits_{x \in \re^2} &
	x_1^2x_2 + x_2^2x_1 -3x_1x_2  \\
	\st  &  x_1 \ge 0, \, x_2 \ge 0.
	\earay \right.
	\]
	Up to a constant, the objective becomes the dehomogenization of
	the Motzkin form \cite{reznick2000some}
	if each $x_i$ is changed to $x_i^2$. For the order $k=3$,
	we get $f_3 \approx -1.0000$ and a  minimizer
	$(1.0000, 1.0000)$. We also get two minimizers at infinity.
	They are $(1.0000, 0.0000)$ and $(0.0000, 1.0000)$.
	
	\noindent
	(iii) Consider the optimization:
	\[
	\left\{ \baray{rl}
	\min\limits_{x \in \re^2}  &  x_1^2x_2 + x_2^2 + x_1 -3x_1x_2   \\
	\st  &  x_1 \ge 0, \, x_2 \ge 0.
	\earay \right.
	\]
	The objective becomes the dehomogenization of the Choi-Lam form \cite{reznick2000some}
	if each $x_i$ is changed to $x_i^2$. For the order $k=2$,
	we get $f_2 \approx 2.9586\times 10^{-8}$ and two  minimizers:
	$(1.0000, 1.0000)$, $(0.0000, 0.0000)$.
	We also get two minimizers at infinity.
	They are $(1.0000, 0.0000)$ and $(0.0000, 1.0000)$.

	\noindent
	(iv) Consider the optimization:
	\[
	\left\{ \baray{rl}
	\min\limits_{x \in \re^2} &
	x_1^3+x_2^3+3x_1x_2-x_1^2(x_2+1)-x_2^2(x_1+1)-(x_1+x_2)  \\
	\st  &  x_1 \ge 0, \, x_2 \ge 0.
	\earay \right.
	\]
	Up to a constant, the objective becomes the dehomogenization of the Robinson form \cite{reznick2000some}
	if each $x_i$ is changed to $x_i^2$. For the order $k=2$,
	we get $f_2 \approx -1.0000$ and three  minimizers:
	\[
	(1.0000, 1.0000), \quad (0.0000, 1.0000), \quad (1.0000, 0.0000).
	\]
	We also get one minimizer at infinity: $(0.7071, 0.7071)$.
\end{exm}

\begin{exm}
	(i) Consider the constrained optimization:
	\begin{equation*}
		\left\{ \baray{rl}
		\min\limits_{x \in \mathbb{R}^{2}} & x_{1}^{6}+x_{2}^{6}+1+3 x_{1}^{2} x_{2}^{2}
		-x_{1}^{2}\left(x_{2}^{4}+1\right)  \\
		& \qquad -x_{2}^{2}\left(1+x_{1}^{4}\right)
		-\left(x_{1}^{4}+x_{2}^{4}\right)  \\
		\st & x_{1}+x_{2}+1=0
		\earay \right.
	\end{equation*}
	This problem is a variation of  Example~5.2 in \cite{nie2013exact}.
	For the order $k=3$, we get $f_3 \approx 5.4436\times10^{-6}$
	and we get two  minimizers:
	$( -1.0000, 0.0000)$, $( 0.0000, -1.0000)$.
	We also get two minimizers at infinity:
	\[
	(0.7068, -0.7074), \quad (-0.7074, 0.7068).
	\]
	
	\noindent
	(ii) Consider the constrained optimization:
	\begin{equation*}
		\left\{ \baray{rl}
		\min\limits_{x \in \re^2}  &  x_1^2+x_2^2  \\
		\st & x_2^2-1 \geq 0, \\
		& x_1^2-2x_1x_2-1 \geq 0, \\
		& x_1^2+2x_1x_2-1 \geq 0.
		\earay \right.
	\end{equation*}
	This example is from \cite{nie2013exact}.
	The minimum value $f_{\min} \approx 6.8284$
	and the minimizers are $(\pm (1+\sqrt{2}), \pm 1)$.
	There are no minimizers at infinity.
	For the order $k=3$, we get $f_3 \approx 6.8284$
	and four minimizers:
	$
	(\pm 2.4142,  \pm 1.0000 ).
	$

	\noindent
	(iii) Consider the constrained optimization:
	\begin{equation*}
		\left\{\baray{rl}
		\min\limits_{x \in \mathbb{R}^{3}}  &   x_{1}^{2}\left(x_{1}-1\right)^{2}+x_{2}^{2}\left(x_{2}-1\right)^{2}+
		x_{3}^{2}\left(x_{3}-1\right)^{2}  \\
		& \qquad +2 x_{1} x_{2} x_{3}\left(x_{1}+x_{2}+x_{3}-2 \right)\\	
		& \qquad +\left(x_{1}-1\right)^{2}+\left(x_{2}-1\right)^{2}+\left(x_{3}-1\right)^{2} \\		
		\st & x_{1}-2x_{2}^2 \geq 0, \  x_{2}-x_{3} \geq 0 .	
		\earay \right.
	\end{equation*}
	The sum of the first four
	terms of the objective is a nonnegative polynomial \cite{reznick2000some}.
	For $k=2$, we get $f_2 \approx 0.4708$ and the minimizer
	$(0.6979, 0.6980,0.6978)$.
	
	\noindent
	(iv) Consider the constrained optimization:
	\begin{equation*}
		\left\{ \baray{rl}
		\min\limits_{x \in \mathbb{R}^{2}}  &
		2x_{1}^{3}+2x_{2}^{3}+4 x_{1} x_{2} -x_{1}\left(x_{2}^{2}+1\right)  \\
		& \qquad +x_{2}\left(1+x_{1}^{2}\right)+x_{1}^{2}+x_{2}^{2}\\		
		\st &  x_{1} \geq 1,  \, x_{2}  \geq 1.	
		\earay \right.
	\end{equation*}
	For $k=2$, we get $f_2 \approx 2.0000$ and the minimizer
	$(1.0000, 1.0000)$.
\end{exm}

\subsection{The case with no minimizers}
\label{ssc:notattain}

When the minimum value $f_{\min}$ of \reff{1.1} is not achievable,
i.e., \reff{1.1} has no minimizers,
then the moment relaxation \reff{d3.3}
can not have a minimizer $y^*$ satisfying \reff{FT:y*}.
This is implied by Lemma~\ref{lm:FT:min}.
Indeed, the moment relaxation \reff{d3.3}
typically does not achieve its optimal value,
i.e., it does not have optimizers either.
For such cases, there often exist numerical issues
for solving Moment-SOS relaxations \reff{3.3}-\reff{d3.3},
although the finite convergence is guaranteed
under some assumptions on minimizers at infinity.

For instance, consider the unconstrained optimization with the objective
$
f = x_1^4 + (x_1x_2-1)^2.
$
Clearly, $ f_{\min} = 0$ is not achievable and
$f_k =f^{\prime}_k=0$ for all $k \ge 2$.
The moment relaxation \reff{d3.3} does not have an optimizer.
The objective in \reff{d3.3} is
$
y_{040} + y_{022} -2 y_{211} + y_{400}.
$
For $k\geq 2$, the moment matrix constraint $M_k[y] \succeq 0$ implies that
\[
\left(
\begin{array}{rccccc}
	y_{400} & y_{310} & y_{301} & y_{220} & y_{211} & y_{202} \\
	y_{310} & y_{220} & y_{211} & y_{130} & y_{121} & y_{112} \\
	y_{301} & y_{211} & y_{202} & y_{121} & y_{112} & y_{103} \\
	y_{220} & y_{130} & y_{121} & y_{040} & y_{031} & y_{022} \\
	y_{211} & y_{121} & y_{112} & y_{031} & y_{022} & y_{013} \\
	y_{202} & y_{112} & y_{103} & y_{022} & y_{013} & y_{004} \\
\end{array}
\right) \succeq 0.
\]
Note that $y_{400}=1$.
When $y$ is feasible for \reff{d3.3}, we can get
\[
y_{040} \geq 0, \quad y_{022} \geq 0, \quad
y_{022} -2 y_{211} + 1 \geq 0.
\]
If $y_{040} >0$, then $\langle \tilde{f}, y \rangle >0$.
If $y_{040} =0$, then
\[
y_{022}= 0, \quad y_{211} =0, \quad
y_{022} -2 y_{211} + 1 = 1 > 0.
\]
The objective of \reff{d3.3} is positive
for all feasible $y$, so it does not achieve the optimal value.
There are numerical troubles for
for solving the Moment-SOS relaxations.

When $f_{\min}$ is not achievable, a more numerically well-posed problem
is to compute minimizers at infinity.
We apply the Moment-SOS relaxations to solve
the optimization problem \reff{p3.7}
for one or several minimizers $x^*$ at infinity.
When the optimality conditions hold at minimizers of \reff{p3.7},
its Moment-SOS hierarchy has finite convergence,
so its minimizers can be obtained
(see \cite{nie2013certifying,nieopcd}).
For $f_{\min} > -\infty$, it is necessary that $f^{(1)}(x^*) = 0$.
This is implied by Theorem~\ref{thm:prop:inf}.

\begin{exm}\label{ex7.4}
	(i) Consider the optimization:
	\begin{equation*}
		\min_{x\in \mR^2} \quad x_2^2+(2x_2^2+2x_1x_2+1)^2.
	\end{equation*}
	The minimum value $f_{\min} = 0$ is not achievable.
	The minimizers at infinity are $(\pm1,0)$, $(\frac{1}{\sqrt{2}},-\frac{1}{\sqrt{2}})$, $(-\frac{1}{\sqrt{2}},\frac{1}{\sqrt{2}})$.
	For the order $k=3$, we get $f_3 \approx  9.8893\times 10^{-9}$
	and four minimizers at infinity:
	\[
	(-1.0000,  0.0001), \,(1.0000, -0.0001), \,
	(-0.7071,  0.7071), \, (0.7071, -0.7071).
	\]

	\noindent
	(ii) Consider the optimization:
	\begin{equation*}
		\min_{x\in \mR^3} \quad  x_1^2+(1-x_1x_2)^2 + L(x),
	\end{equation*}
	where $L=x_1^{4} x_2^{2}+x_2^{4} x_3^{2}+x_3^{4} x_1^{2}-3 x_1^{2} x_2^{2} x_3^{2}$
	is the Choi-Lam form. Clearly, $f_{\min}\geq 0$. For the sequence of
	$x^{(k)}=(\frac{1}{n},n,0)$, we have $f(x^{(k)})=\frac{2}{n^{2}}.$
	As $k\rightarrow 0$, we have $f(x^{(k)})\rightarrow 0$,
	which implies $f_{\min} =0$. However, $f_{\min} = 0$ is not achievable,
	since the polynomial $x_1^2+(1-x_1x_2)^2 $ has no real zeros.
	The objective $f$ is not an SOS, since $f^\hm=L(x)$ is not an SOS.
	The minimizers at infinity are
	\[
	\frac{1}{\sqrt{3}}(\pm1,\pm1,\pm1), (\pm1,0,0),(0,\pm1,0),(0,0,\pm1).
	\]
	For $k=5$, we get $f_5 \approx -1.6413\times 10^{-8}$
	and all the minimizers at infinity.

	\noindent
	(iii)  Consider the optimization:
	\begin{equation*}
		\min_{x\in \mR^3} \quad \eps ((x_3^2+x_1x_3+1)^2+x_3^6)+R(x),
	\end{equation*}
	where $\eps >0$ and $R$ is the dehomogenized Robinson polynomial
	\[
	R(x)=1+x_2^6+x_3^6+3x_2^2x_3^2-(x_2^2+x_3^2)
	-x_2^4(1+x_3^2)-x_3^4(1+x_2^2).
	\]
	Clearly, $f_{\min}\geq 0$, since the objective is
	a sum of two nonnegative polynomials.
	For the sequence of $x^{(k)}=(-\frac{1+n^2}{n},1,\frac{1}{n})$,
	we have $f(x^{(k)})=\frac{1+\epsilon-2n^2+n^4}{n^{6}}.$
	As $k\rightarrow 0$, we have $f(x^{(k)})\rightarrow 0$.
	So, $f_{\min} =0$. However, $f_{\min}$ is not achievable,
	since the polynomial $(x_3^2+x_1x_3+1)^2+x_3^2$ has no real zeros.
	For $\eps > 0$ small enough, $f$ is not an SOS.
	The minimizers at infinity are $(\pm1,0,0)$.
	Let $\eps=1$, for the order $k=3$, we get $f_3 \approx 3.1810\times 10^{-10}$
	and two  minimizers at infinity:
	$(\pm 1.0000, 0.0000, 0.0000)$.

	\noindent
	(iv) Consider the optimization:
	\begin{equation*}
		\left\{ \baray{rl}
		\min\limits_{x\in \mR^5}   &  (x_1+x_2+x_3+x_4x_5)^2
		-4 \Big(x_1x_2+x_2x_3+x_3(x_4x_5-1) \\
		& \qquad   +x_4x_5-1+x_1 \Big)  +(x_1-1)^2+x_4^2 \\
		\st  & x_1\geq0, \, x_2-x_1\geq 0, \,  x_3-x_2\geq0, \\
		& x_4-x_3\geq0, \, x_5-x_4\geq0,x_4x_5\geq1.
		\earay \right.
	\end{equation*}
	The minimum value $f_{\min} = 0$ is not achievable.
	The minimizer at infinity is $(0,0,0,0,1)$.
	For the order $k=2$, we get $f_2 \approx  6.2486\times10^{-9}$
	and the minimizer at infinity
	$(0.0000,0.0000,0.0000,0.0000,1.0000)$.
\end{exm}

When minimizers at infinity are obtained,
we can apply the procedure in \cite{vuitan} to determine the value $f_{\min}$.
With these minimizers at infinity, one can drop some connected components
of the tangency variety that do not converge to minimizers at infinity.
This can save computational expense. For instance,
we consider the polynomial $f = (x_1x_2-1)^{2}+x_2^{2}$
and $K = \re^2$. Clearly, the minimum value $f_{\min}$
is not attainable.  By the procedure in \cite{vuitan},
the tangency variety $\Gamma\left(f, \mathbb{R}^{2}\right)$ is given by the equation:
\[
2\left(-x_1^{3} x_2+x_1 x_2^{3}+x_1^{2}-x_1 x_2-x_2^{2}\right) =  0.
\]
For $R>0$ large enough, the set
$\Gamma\left(f, \mathbb{R}^{2}\right) \cap \{x_1^2+x_2^2\geq R\}$
has eight connected components:
\[
\begin{array}{lll}
	\Gamma_{\pm 1}: & x_1:=t, &
	x_2:=-t+\frac{1}{2} t^{-1}+\frac{5}{8} t^{-3}+\cdots, \\
	\Gamma_{\pm 2}: & x_1:=t, &
	x_2:=t+\frac{1}{2} t^{-1}+\frac{3}{8} t^{-3}+\cdots, \\
	\Gamma_{\pm 3}: & x_1:=t, &
	x_2:=t^{-1}-t^{-3}+\cdots, \\
	\Gamma_{\pm 4}: & x_1:=t^{-1}, &
	x_2:=t+t^{-1}-t^{-3}+\cdots,
\end{array}
\]
for a parameter $t \rightarrow +\infty$ or $t \rightarrow - \infty$.
As in \cite{vuitan}, one can compute the asymptotic value of $f$
in each component $\Gamma_{\pm i}$.
Substituting the parametrization in $f$, we get
\[
\baray{ll}
\left.f\right|_{\Gamma_{\pm 1}} =t^{4}+4 t^{2}+2-\frac{23}{8} t^{-2}+\cdots,  &
\left.f\right|_{\Gamma_{\pm 2}} =t^{4}+2+\frac{5}{8} t^{-2}+\cdots \\
\left.f\right|_{\Gamma_{\pm 3}} =t^{-2}-t^{-4}+t^{-6}+\cdots, &
\left.f\right|_{\Gamma_{\pm 4}} =t^{2}+2-t^{-2}-t^{-4}+\cdots .
\earay
\]
The asymptotic values on these components are
\[
\lambda_{\pm 1} \,= \, \lambda_{\pm 2} \, = \, \lambda_{\pm 4} \,= \, +\infty,
\quad \lambda_{\pm 3}=0.
\]
Therefore,   the minimum value
$
f_{\min } =
\min\limits_{k=1,2,3,4} \lambda_{\pm k} = 0.
$
For this example, the minimizers at infinity are $(\pm 1,0)$ and $(0,\pm 1)$.
In the computation, if minimizers at infinity are obtained,
we do not need to consider the components
$\Gamma_{\pm 1}$, $\Gamma_{\pm 2}$,
since their normalizations
%
%
do not converge to minimizers at infinity.
We refer to \cite{ha2009solving,vui2008global,schweighofer2006global}
for related work for the case that $f_{\min}$ is not achievable.

\section{Conclusions and discussions}
\label{sc:dis}

This paper gives a Moment-SOS hierarchy for polynomial optimization
with unbounded sets, based on homogenization.
We prove this hierarchy has finite convergence
under some assumptions on optimality conditions about minimizers.
We also extend the Putinar-Vasilescu type Positivstellensatz
to polynomials that are  nonnegative on unbounded sets.
The classical Moment-SOS hierarchy
with denominators is also studied.
Moreover, we give a positive answer to a conjecture of
Mai, Lasserre and Magron in their recent work \cite{MLM21}.

To prove the convergence of the hierarchy of \reff{3.3}-\reff{d3.3},
we made the assumption that the ideal
$\ideal{\tilde{c}_{\mathcal{E}}}$ is real radical.
We remark that $\ideal{\tilde{c}_{\mathcal{E}}}$ being real radical
is a general condition (see \cite{JosYu16}).
When $\ideal{\tilde{c}_{\mathcal{E}}}$ is not real radical,
we do not know if the hierarchy of \reff{3.3}-\reff{d3.3}
still has the finite convergence,
under the remaining assumptions as in Theorem~\ref{thm3.12}.
However, we have the following result when
$\ideal{\tilde{c}_{\mathcal{E}}}$ is not real radical.

\begin{theorem} \label{thm:radical}
	Assume $K$ is closed at $\infty$. If the LICQC, SCC and SOSC hold
	at every minimizer of (1.1), including the one at infinity,
	then there exists an integer $k_0 >0$ such that
	\be \label{x02k:finitecvg}
	x_0^{2\ell}( \tilde{f}-( f_{\min}-\epsilon) \cdot x_0^d) \in
	\ideal{\tilde{c}_{\mathcal{E}}} +\qmod{\tilde{c}_{\mathcal{I}}}
	\ee
	for every $\eps >0$ and for all $\ell \geq k_0$.
\end{theorem}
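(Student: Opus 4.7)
The plan is to deduce this result from Theorem~1.1 of \cite{nieopcd} (the same tool that powered Lemma~\ref{hom:opt}), which does \emph{not} require the real radical hypothesis but only produces a representation modulo $\ideal{V_\mathbb{R}(\tilde{c}_\mathcal{E})}$. The factor $x_0^{2\ell}$ is what lets us upgrade membership in $\ideal{V_\mathbb{R}(\tilde{c}_\mathcal{E})}$ to membership in $\ideal{\tilde{c}_\mathcal{E}} + \qmod{\tilde{c}_\mathcal{I}}$.

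First, I would separate the $\epsilon$-dependence. Writing
\[
x_0^{2\ell}\bigl(\tilde{f}-(f_{\min}-\epsilon)x_0^d\bigr) \, = \, x_0^{2\ell}\bigl(\tilde{f}-f_{\min}x_0^d\bigr) + \epsilon\, x_0^{2\ell+d},
\]
the second summand lies in $\qmod{\tilde{c}_\mathcal{I}}$, since $x_0^{2\ell+d}$ is either a square (if $2\ell+d$ is even) or $x_0$ times a square (if odd). So it suffices to prove the $\epsilon=0$ statement, namely that $x_0^{2\ell}(\tilde{f}-f_{\min}x_0^d)\in \ideal{\tilde{c}_\mathcal{E}}+\qmod{\tilde{c}_\mathcal{I}}$ for all $\ell\ge k_0$, with $k_0$ depending only on the data of \reff{1.1}; the uniformity in $\epsilon$ is then automatic.

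Second, I would apply the machinery of Lemma~\ref{hom:opt} to \reff{3.5}. The archimedean property of $\ideal{\tilde{c}_\mathcal{E}}+\qmod{\tilde{c}_\mathcal{I}}$ holds because of the sphere constraint. By Theorem~\ref{lemma3.11}, the LICQC/SCC/SOSC at every regular minimizer of \reff{3.5} follow from the hypothesis at minimizers of \reff{1.1}, and at minimizers at infinity they are assumed directly; hence Theorem~1.1 of \cite{nieopcd} provides
\[
\tilde{f} - f_{\min}x_0^d \, = \, \phi + \sigma, \qquad \phi\in \ideal{V_\mathbb{R}(\tilde{c}_\mathcal{E})},\ \sigma \in \qmod{\tilde{c}_\mathcal{I}}.
\]
Multiplying through by $x_0^{2\ell}=(x_0^{\ell})^2$ and using that $\qmod{\tilde{c}_\mathcal{I}}$ is closed under multiplication by squares, the problem reduces to establishing
\[
x_0^{2\ell}\phi \,\in\, \ideal{\tilde{c}_\mathcal{E}} + \qmod{\tilde{c}_\mathcal{I}} \quad \text{for all large } \ell,
\]
for every $\phi$ in the real radical $\ideal{V_\mathbb{R}(\tilde{c}_\mathcal{E})}$ of $\ideal{\tilde{c}_\mathcal{E}}$.

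Third, for this upgrading step I would use a Noetherianity-plus-archimedean argument. The ascending chain of ideal quotients $\ideal{\tilde{c}_\mathcal{E}}:x_0^{k}$ stabilizes at some $N_0\in\mathbb{N}$; its stable value $\ideal{\tilde{c}_\mathcal{E}}:x_0^{\infty}$ is the vanishing ideal of the Zariski closure of $V_\mathbb{C}(\tilde{c}_\mathcal{E})\setminus V_\mathbb{C}(x_0)$, which under the LICQC at minimizers (in particular the smoothness at infinity points this forces) contains $\ideal{V_\mathbb{R}(\tilde{c}_\mathcal{E})}$ up to a sum-of-squares correction; combined with Putinar's Positivstellensatz (Theorem~\ref{thm2.1}) applied on the compact archimedean set $\widetilde{K}$ to an auxiliary strictly positive polynomial built from $x_0^{2\ell}\phi$ and a suitable ideal correction, one extracts the desired representation. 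The main obstacle is precisely this last step: bridging from ``$\phi$ vanishes on the real variety $V_\mathbb{R}(\tilde{c}_\mathcal{E})$'' to ``$x_0^{2\ell}\phi\in\ideal{\tilde{c}_\mathcal{E}}+\qmod{\tilde{c}_\mathcal{I}}$'', since the naive Real Nullstellensatz only delivers $\phi^{2N}+\Sigma[\tilde{x}]\subseteq \ideal{\tilde{c}_\mathcal{E}}$, not a linear-in-$\phi$ statement; exploiting the \emph{specific} form of $\phi$ produced by Nie's construction (built from Lagrange functions at minimizers, where the LICQC forces a primary/smooth local structure) is what makes the multiplier $x_0^{2\ell}$ effective here.
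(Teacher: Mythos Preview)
Your first two steps are sound and parallel the paper's opening: apply Theorem~1.1 of \cite{nieopcd} to write $\tilde{f}-f_{\min}x_0^d=\sigma+\bar f$ with $\sigma\in\qmod{\tilde{c}_\mathcal{I}}$ and $\bar f\in\ideal{V_\mathbb{R}(\tilde{c}_\mathcal{E})}$. The gap is your third step. By splitting off $\epsilon\,x_0^{2\ell+d}$ you have reduced the theorem to the $\epsilon=0$ assertion $x_0^{2\ell}\bar f\in\ideal{\tilde{c}_\mathcal{E}}+\qmod{\tilde{c}_\mathcal{I}}$, which is \emph{strictly stronger} than what is being claimed; the paper does not prove it, and its truth without the real-radical hypothesis is not clear. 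Your proposed route via the saturation $\ideal{\tilde{c}_\mathcal{E}}:x_0^\infty$ does not work as stated: LICQC is assumed only at \emph{minimizers} of \reff{3.5}, not at arbitrary points of $V_\mathbb{R}(\tilde{c}_\mathcal{E})$, so no global smoothness or primary-decomposition control on the variety follows, and there is no reason for the $x_0$-saturation to contain the real radical. You yourself flag that the Real Nullstellensatz only yields $\bar f^{\,2k_1}+\sigma_1\in\ideal{\tilde{c}_\mathcal{E}}$ (an even-power statement), not anything linear in $\bar f$; the appeal to the ``specific form'' of Nie's $\bar f$ is too vague to close this.

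The paper instead keeps $\epsilon>0$ in play and uses it essentially. After the Real Nullstellensatz step $\bar f^{\,2k_1}+\sigma_1\in\ideal{\tilde{c}_\mathcal{E}}$, one invokes the fact (see \cite{Nie13}) that for $\omega$ large the univariate polynomial $s(t)=1+t+\omega t^{2k_1}$ is SOS. Substituting $t=\bar f/(\epsilon x_0^d)$ and clearing denominators gives
\[
(\epsilon x_0^d)^{2k_1-1}\bigl(\bar f+\epsilon x_0^d\bigr)\;=\;(\epsilon x_0^d)^{2k_1}\,s\!\left(\tfrac{\bar f}{\epsilon x_0^d}\right)-\omega\,\bar f^{\,2k_1},
\]
whose right side lies in $\Sigma[\tilde{x}]+\ideal{\tilde{c}_\mathcal{E}}$. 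Since $\bar f+\epsilon x_0^d=\tilde f-(f_{\min}-\epsilon)x_0^d-\sigma$, this places $x_0^{(2k_1-1)d}\bigl(\tilde f-(f_{\min}-\epsilon)x_0^d\bigr)$ in $\ideal{\tilde{c}_\mathcal{E}}+\qmod{\tilde{c}_\mathcal{I}}$; adjusting the exponent parity via $x_0\in\tilde{c}_\mathcal{I}$ gives $k_0=\lceil(2k_1-1)d/2\rceil$, independent of $\epsilon$. The extra $\epsilon x_0^d$ is precisely what converts the even-power information from the Real Nullstellensatz into a linear membership statement, and this device is unavailable at $\epsilon=0$.
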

\begin{proof}
	As in Theorem~\ref{thm3.12},
	there exists $\sigma \in \qmod{\tilde{c}_{\mathcal{I}}}$ such that
	\begin{equation*}
		\tilde{f}-f_{\min}x_0^d \, \equiv \, \sigma ~ \bmod ~
		\ideal{V_{\mathbb{R}}(\tilde{c}_{\mathcal{E}})}.
	\end{equation*}
	Let $\bar{f}=\tilde{f}-f_{\min} \cdot x_0^d-\sigma$.
	Then $\bar{f}$ vanishes identically on the variety $V_{\mathbb{R}}(\tilde{c}_{\mathcal{E}})$.
	By the Real Nullstellensatz (see \cite{Lau09}), there exist
	$k_1\in \N$, $\sigma_1\in \Sigma[\tilde{x}]$ such that
	$\bar{f}^{2k_1}+\sigma_1  \in	\ideal{\tilde{c}_{\mathcal{E}}}$.
	Let $\omega >0$ be big enough such that the univariate polynomial
	$s(t):=1+t+ \omega t^{2 k_1}$ is SOS (see \cite{Nie13}).
	For each $\epsilon>0$,  we get
	\[
	(\epsilon x_0^d)^{2k_1} s(\frac{\bar{f}}{\epsilon x_0^d}) \,= \,
	(\epsilon x_0^d)^{2k_1-1}(\bar{f}+\epsilon x_0^d) + \omega \bar{f}^{2k_1}.
	\]
	This implies that
	\[
	x_0^{(2k_1-1)d} \big( \tilde{f}-(f_{\min}-\epsilon)x_0^d-\sigma \big) =
	(\epsilon x_0^d)^{2k_1}s(\frac{\bar{f}}{\epsilon x_0^d}) - \omega \bar{f}^{2k_1}.
	\]
	Since $x_0\in \tilde{c}_{\mathcal{I}}$ and $\sigma_1$ is SOS, we have
	\[
	(\epsilon x_0^d)^{2k_1}s(\frac{\bar{f}}{\epsilon x_0^d}) -\omega \bar{f}^{2k_1}
	\in  \Sigma[\tilde{x}]+ 	\ideal{\tilde{c}_{\mathcal{E}}} \subseteq
	\ideal{\tilde{c}_{\mathcal{E}}} +\qmod{\tilde{c}_{\mathcal{I}}},
	\]
	\[
	x_0((\epsilon x_0^d)^{2k_1}s(\frac{\bar{f}}{\epsilon x_0^d})-\omega \bar{f}^{2k_1})
	\in x_0\Sigma[\tilde{x}]+ 	\ideal{\tilde{c}_{\mathcal{E}}} \subseteq \ideal{\tilde{c}_{\mathcal{E}}} +\qmod{\tilde{c}_{\mathcal{I}}}.
	\]
	Let $k_0=\lceil \frac{(2k_1-1)d}{2} \rceil$. Then, we have
	\[
	x_0^{2k_0}(\tilde{f}-(f_{\min}-\epsilon)x_0^d)=x_0^{2k_0}\sigma+
	x_0^{2k_0-(2k_1-1)d}((\epsilon x_0^d)^{2k_1}
	s(\frac{\bar{f}}{\epsilon x_0^d})-\omega\bar{f}^{2k_1}).
	\]
	It implies that
	\[
	x_0^{2k_0}(\tilde{f}-(f_{\min}-\epsilon)x_0^d) \in \ideal{\tilde{c}_{\mathcal{E}}} +\qmod{\tilde{c}_{\mathcal{I}}}.
	\]
	Note that $k_0$ is independent of $\epsilon > 0$.
	The above implies that \reff{x02k:finitecvg}
	holds for all $\eps >0$ and for all $\ell \geq k_0$.
\end{proof}

For a degree $\ell \geq k_0$,
\reff{x02k:finitecvg} motivates the hierarchy of the following relaxations
\begin{equation}  \label{rel}
	\left\{ \baray{rl}
	\max &   \gamma \\
	\st &  x_0^{2\ell}(\tilde{f}(\tilde{x})- \gamma x_0^d )\in
	\ideal{\tilde{c}_{\mathcal{E}}}_{2k} +\qmod{\tilde{c}_{\mathcal{I}}}_{2k}.
	\earay \right.
\end{equation}
Theorem~\ref{thm:radical} shows that the optimal value of relaxations \reff{rel}
has the finite convergence to $f_{\min}$,
even if $\ideal{\tilde{c}_{\mathcal{I}}}$ is not real radical,
under the remaining assumptions. Because of this result,
we make the following conjecture.

\begin{conj}
	When the ideal $\ideal{\tilde{c}_{\mathcal{E}}}$ is not real radical,
	the hierarchy of relaxations \reff{3.3}-\reff{d3.3} is also tight,
	i.e., $f_k = f_k^\prime = f_{\min}$,
	for all $k$ big enough,
	under the other assumptions of Theorem~\ref{thm3.12}.
\end{conj}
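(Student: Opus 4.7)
My plan is to strengthen Theorem~\ref{thm:radical}, whose representation
\[
x_0^{2\ell}\bigl(\tilde{f} - (f_{\min}-\epsilon)\, x_0^d\bigr) \in \ideal{\tilde{c}_{\mathcal{E}}} + \qmod{\tilde{c}_{\mathcal{I}}}
\]
carries both a perturbation $\epsilon>0$ and a weight $x_0^{2\ell}$, into an exact, weight-free representation of $\tilde{f} - f_{\min} x_0^d$. I would handle the two defects in succession, first removing $\epsilon$ by a closedness/limit argument and then cancelling $x_0^{2\ell}$ using the optimality hypotheses at every minimizer, including those at infinity.

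For removing the perturbation, I would send $\epsilon \to 0^+$. Tracking the proof of Theorem~\ref{thm:radical}, the integer $k_1$, the Nullstellensatz witness $\sigma_1$, the constant $\omega$, and the SOS representation of $s(t)=1+t+\omega t^{2k_1}$ are all chosen independently of $\epsilon$; only the scalar factors $\epsilon^{2k_1}$ and $\epsilon^{2k_1-1}$ vary. Hence the truncation degree of the representation is bounded uniformly in $\epsilon$. Since the sphere constraint makes $\ideal{\tilde{c}_{\mathcal{E}}}+\qmod{\tilde{c}_{\mathcal{I}}}$ archimedean and the truncated cone $\ideal{\tilde{c}_{\mathcal{E}}}_{2k}+\qmod{\tilde{c}_{\mathcal{I}}}_{2k}$ is closed in the finite-dimensional space $\mathbb{R}[\tilde{x}]_{2k}$, passing to the limit yields
\[
x_0^{2\ell}\bigl(\tilde{f} - f_{\min}\, x_0^d\bigr) \in \ideal{\tilde{c}_{\mathcal{E}}}_{2k^*}+\qmod{\tilde{c}_{\mathcal{I}}}_{2k^*}
\]
for some fixed $k^*$.

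For cancelling the weight $x_0^{2\ell}$, I would build local weight-free SOS certificates at every minimizer of \reff{3.5}. Near a regular minimizer $\tilde{x}^*$ with $x_0^*>0$, the factor $x_0^{2\ell}$ is bounded below by a positive constant, and Theorem~\ref{lemma3.11} transfers the LICQC, SCC, and SOSC to $\tilde{x}^*$, so a weight-free local certificate follows from the constructions of \cite{nieopcd}. Near a minimizer at infinity $\tilde{x}^*=(0,x^*)$, Theorem~\ref{p4.7} supplies the analogous first- and second-order structure, including the strict positivity of $\nabla^2 L_1$ on the tangent space \reff{hom:inf:s}, which should again produce a weight-free local certificate after restriction to the sphere. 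A Scheiderer--Marshall style local-global principle on the compact set $\widetilde{K}$ would then glue these local certificates into a global one in $\ideal{\tilde{c}_{\mathcal{E}}}+\qmod{\tilde{c}_{\mathcal{I}}}$, giving $f_k=f_{\min}$. The matching equality $f_k'=f_{\min}$ on the moment side follows from SDP strong duality, which holds because the sphere constraint provides a Slater point for the moment program.

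The principal obstacle is this global assembly. When $\ideal{\tilde{c}_{\mathcal{E}}}$ fails to be real radical, it contains nonreduced or embedded primary components that are invisible to quadratic module structure, so the local weight-free certificates need not agree modulo $\ideal{\tilde{c}_{\mathcal{E}}}$ even when they agree modulo $\ideal{V_{\mathbb{R}}(\tilde{c}_{\mathcal{E}})}$. This is exactly the point where the real radical hypothesis was used crucially in the proof of Theorem~\ref{thm3.12}, and its removal is the heart of the conjecture. Overcoming it would likely require either a refined version of Theorem~1.1 of \cite{nieopcd} whose hypothesis weakens real radicalness to the optimality conditions alone, or an induction on the primary decomposition of $\ideal{\tilde{c}_{\mathcal{E}}}$ that peels off non-real-radical components while absorbing their contribution into $\qmod{\tilde{c}_{\mathcal{I}}}$ on $\widetilde{K}$.
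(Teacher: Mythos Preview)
The statement you are addressing is labelled a \emph{conjecture} in the paper and is left open; there is no proof in the paper to compare your proposal against. Your write-up should therefore be read as a research outline rather than a proof, and indeed you acknowledge as much in your final paragraph: the ``principal obstacle'' you identify --- gluing local certificates into a global one modulo $\ideal{\tilde{c}_{\mathcal{E}}}$ rather than modulo $\ideal{V_{\mathbb{R}}(\tilde{c}_{\mathcal{E}})}$ --- is precisely the content of the conjecture, and neither of the two routes you sketch (a strengthened version of Theorem~1.1 of \cite{nieopcd}, or induction on a primary decomposition) is carried out. So as it stands the proposal does not prove the conjecture; it restates why it is hard.

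Two further technical remarks on the outline itself. First, in your Step~1 you assert that the truncated cone $\ideal{\tilde{c}_{\mathcal{E}}}_{2k}+\qmod{\tilde{c}_{\mathcal{I}}}_{2k}$ is closed. Closedness of truncated quadratic modules is delicate and does not follow from archimedeanness alone; you would need to invoke (and verify the hypotheses of) a specific closedness result, e.g.\ one exploiting the explicit sphere equation $\|\tilde{x}\|^2-1=0$ in $\tilde{c}_{\mathcal{E}}$. Note also that the explicit representation in the proof of Theorem~\ref{thm:radical} has coefficients that blow up like $\epsilon^{1-2k_1}$ as $\epsilon\to 0$, so you cannot pass to the limit termwise --- your argument genuinely needs the abstract closedness of the cone, not convergence of the certificates. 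Second, your Step~2 does not actually consume the output of Step~1: the local--global construction you describe attacks $\tilde{f}-f_{\min}x_0^d$ directly and makes no use of the weighted, $\epsilon$-free certificate $x_0^{2\ell}(\tilde{f}-f_{\min}x_0^d)$ that Step~1 is meant to produce. If Step~1 is to play a role, you would need to explain how dividing out the factor $x_0^{2\ell}$ interacts with membership in $\ideal{\tilde{c}_{\mathcal{E}}}+\qmod{\tilde{c}_{\mathcal{I}}}$, which again runs into the non-real-radical obstruction.
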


When the minimum value $f_{\min}$ is not achievable,
the Moment-SOS hierarchy of relaxations \reff{3.3}-\reff{d3.3}
also has finite convergence under some assumptions
on optimality conditions for minimizers at infinity.
However, there are numerical issues for solving the hierarchy,
since there are no optimizers satisfying flat truncation
for the moment relaxations.
It is an interesting future work to get numerically stable
Moment-SOS relaxations for computing
$f_{\min}$ when it is not achievable.

\

{ \bf Acknowledgements}
	The authors would like to thank the editors and anonymous referees
	for fruitful comments and suggestions.
	Lei Huang and Ya-Xiang Yuan are partially supported by 
	the National Natural Science Foundation of China (No.~12288201).

%
%



\end{document}